\title{Flat topology on prime, maximal and minimal prime spectra of quantales}
\author{George Georgescu \\ \footnotesize University of Bucharest\\ \footnotesize Faculty of Mathematics and Computer Science\\ \footnotesize Bucharest, Romania\\ \footnotesize Email: georgescu.capreni@yahoo.com}
\date{}
\begin{document}
\maketitle

\begin{abstract}
Several topologies can be defined on the prime, the maximal and the minimal prime spectra of a commutative ring; among them, we mention the Zariski topology, the patch topology and the flat topology. By using these topologies, Tarizadeh and Aghajani obtained recently new characterizations of various classes of rings: Gelfand rings, clean rings, absolutely flat rings, $mp$ - rings,etc. The aim of this paper is to generalize some of their results to quantales, structures that constitute a good abstractization for lattices of ideals, filters and congruences. We shall study the flat and the patch topologies on the prime, the maximal and the minimal prime spectra of a coherent quantale. By using these two topologies one obtains new characterization theorems for hyperarchimedean quantales, normal quantales, B-normal quantales, $mp$ - quantales and $PF$ - quantales. The general results can be applied to several concrete algebras: commutative rings, bounded distributive lattices, MV-algebras, BL-algebras, residuated lattices, commutative unital $l$ - groups, etc.

\end{abstract}

\textbf{Keywords}: flat topology, coherent quantale, reticulation, hyperarchimedean quantales, normal and $B$ - normal quantales, $mp$ - quantales.
\newtheorem{definitie}{Definition}[section]
\newtheorem{propozitie}[definitie]{Proposition}
\newtheorem{remarca}[definitie]{Remark}
\newtheorem{exemplu}[definitie]{Example}
\newtheorem{intrebare}[definitie]{Open question}
\newtheorem{lema}[definitie]{Lemma}
\newtheorem{teorema}[definitie]{Theorem}
\newtheorem{corolar}[definitie]{Corollary}

\newenvironment{proof}{\noindent\textbf{Proof.}}{\hfill\rule{2mm}{2mm}\vspace*{5mm}}

\section{Introduction}

 \hspace{0.5cm}The flat topology on the prime spectrum of a ring was introduced by Hochster in \cite{Hochster} under the name of inverse topology. It was rediscovered by Doobs et al. in \cite{Doobs}, where the terminology of "flat topology" appeared. The flat topology is strongly related to other two topologies defined on the prime spectrum of a commutative ring: spectral topology and patch topology \cite{Dickmann},\cite{Johnstone},\cite{Tar1}. These three topologies have a deep impact on some important themes in ring theory (see \cite{Hochster},\cite{Dickmann},\cite{Aghajani}). Recently, Tarizadeh proposed in \cite{Tar1} purely algebraic definitions for flat and patch topologies on the prime spectrum $Spec(R)$ of a commutative ring $R$. For examples, the closed sets in the flat topology on $Spec(R)$ are defined as the images $Im(f^{\ast})$, where $f^{\ast} : Spec(A)  \rightarrow Spec(R)$ is the map induced by a flat morphism $f:R \rightarrow   A$.
Further, the flat and the patch topologies were used to obtain new properties and new characterizations of Gelfand rings and clean rings, as well as of new classes of rings: mp-rings and purified rings (see \cite{Aghajani},\cite{Tar2},\cite{Tar3}).
A natural problem is to use the flat and the patch topology for obtaining new results on the spectra of other types of algebras.
On the other hand, the quantales constitute a good abstraction of the lattices of ideals, filters and congruences in various algebraic structures. A rich literature was dedicated to the spectra of quantales (see \cite{Eklud},\cite{Georgescu},\cite{PasekaRN},\cite{Paseka},\cite{Rosenthal}). Besides the quantales, other types of multiplicative lattices were proposed to abstractize the lattices of ideals, filters and congruences \cite{GeorgescuVoiculescu}, \cite{Martinez},\cite{Simmons}, \cite{SimmonsC}.
In this paper we shall study the flat topology on prime, maximal and minimal prime spectra of a coherent quantale in connection to important classes of quantales: hyperarchimedean, normal, $B$-normal, $mp$ - quantales, etc. We shall obtain new characterizations of these types of quantales in terms of some algebraic and topological properties of spectra. Our results extend some theorems proven in \cite{Aghajani},\cite{Tar1},\cite{Tar2},\cite{Tar3}, \cite{Al-Ezeh1}, \cite{Al-Ezeh2}, etc.  for the spectra of commutative rings. The proofs of some results will use the reticulation of a quantale, a construction that assigns to each coherent quantale $A$ a unique bounded distributive lattice $L(A)$, whose prime spectrum $Spec_{Id}(L(A))$ is homeomorphic to the prime spectrum of $A$.

Now we shall describe the content of this paper. In Section $2$ we present some notions and basic results on the prime and the maximal spectra of a quantale, their spectral topologies and the radical elements (cf. \cite{Rosenthal},\cite{Martinez},\cite{Eklud},\cite{Paseka}).

Section $3$ contains the axiomatic definition of reticulation of a coherent quantale $A$ and the connection between the $m$-prime elements of $A$ and the prime ideals of the reticulation $L(A)$. The homeomorphism between the prime spectrum $Spec(A)$ of $A$ and the space $Spec_{Id}(L(A))$ of the prime ideals in $L(A)$ (with the Stone topology) induces on $Spec(A)$ a spectral topology (this spectral space is denoted by $Spec_Z(A)$, because it generalizes the Zariski topology).

Section $4$ deals with the Boolean center $B(A)$ of a quantale $A$. We present a short proof that $B(A)$ is isomorphic to the Boolean algebra $B(L(A))$ of  complemented elements in $L(A)$. Section $5$ concerns the patch and the flat topology on $Spec(A)$, associated with the spectral space $Spec_Z(A)$;these two topological spaces will be denoted by $Spec_P(A)$, resp. $Spec_F(A)$. We also introduce the Pierce spectrum $Sp(A)$ of the quantale $A$. In Section $6$ we continue the study of the hyperarchimedean quantales, initiated in \cite{Cheptea1}. The main result of the section presents new characterizations of these objects in terms of the flat and the patch topologies.

Section $7$ contains results about the flat topology on the maximal spectrum $Max(A)$ of the coherent quantale $A$. The new topological space $Max_F(A)$ is Hausdorff and zero - dimensional. We use the flat topology on $Max(A)$ in order to obtain new properties that characterize  the normal and the B-normal quantales \cite{Cheptea1},\cite{GeorgescuVoiculescu2},\cite{PasekaRN},\cite{SimmonsC}. The normal quantales constitute an abstractization of the lattices of ideals in Gelfand rings \cite{Johnstone},\cite{Lam},\cite{c} and in normal lattices\cite{Cornish},\cite{Pawar},\cite{GeorgescuVoiculescu}, while the B-normal quantales generalize the lattices of ideals in clean rings \cite{a},\cite{b} and in B-normal lattices \cite{Cignoli},\cite{Cheptea}. Thus our theorems on normal and B-normal quantales can be applied to various types of structures: Gelfand rings and normal lattices, normal and $B$ - normal lattices, Gelfand residuated lattices and residuated lattices with Boolean lifting property \cite{GCM}, clean unital $l$ - groups \cite{f},etc.

In Section $8$ we study two topologies on the set $Min(A)$ of the minimal  $m$ - prime elements of a coherent quantale $A$. Thus we obtain two topological spaces $Min_Z(A)$ and $Min_F(A)$ : the first one is a subspace of $SpecZ(A)$ and the second one is a subspace of $Spec_F(A)$. By using the reticulation we prove that  $Min_Z(A)$ is a zero - dimensional Hausdorff space and $Min_F(A)$ is a compact $T1$ space. The main results of this section focus on the $mp$ - quantales, a notion that generalizes the $mc$ - rings of \cite{Aghajani}. We present several conditions that characterize $mp$ - quantales. For example, we prove that a coherent quantale $A$ is an $mc$ - quantale iff the reticulation $L(A)$ is a conormal lattice \cite{Simmons} iff $Spec_F(A)$ is a normal space. We introduce the $PF$ - quantales as a generalization of the $PF$ - rings \cite{Al-Ezeh2} and we prove that a coherent quantale $A$ is a $PF$ - quantale if and only if it is a semiprime $mp$ - quantale. Further we obtain a characterization theorem for $PF$ - quantales.

\section{Preliminaries}

 \hspace{0.5cm} Let $(A,\lor, \land, \cdot, 0, 1  )$ be a quantale and $K(A)$ the set of its compact elements. $A$ is said to be integral if $(A, \cdot, 1)$ is a monoid and commutative, if the multiplication $\cdot$ is commutative. A frame is a quantale in which the multiplication coincides with the meet \cite{Johnstone}. The quantale $A$ is algebraic if any $a \in A$ has the form $a= \bigvee  X$ for some subset $X$ of $K(A)$. An algebraic quantale $A$ is coherent if $1 \in K(A)$ and $K(A)$ is closed under the multiplication.
Throughout this paper, the quantales are assumed to be integral and commutative. Often we shall write $ab$ instead of $a \cdot b$. We fix a quantale $A$.

\begin{lema}
\cite{Birkhoff} For all elements $a, b, c$ of the quantale $A$ the following hold:
\newcounter{nr}
\begin{list}{(\arabic{nr})}{\usecounter{nr}}
\item If $a\lor b =1$ then $a\cdot b= a \land b $;
\item If $a\lor b =1$ then $a^{n}\lor b^{n}=1$ for all integer numbers $n\geq 1$;
\item If $a\lor b=a\lor c=1$ then $a\lor (b\cdot c)= a \lor (b\land c)=1$;
\item If $a\lor b =1$ and $a\leq c$ then $a\lor (b\cdot c)=c$.
\end{list}
\end{lema}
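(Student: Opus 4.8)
The plan is to reduce all four clauses to two elementary facts about a commutative integral quantale. First, integrality forces $xy \leq x \wedge y$ for all $x,y$, because $1$ is the top element, so $xy \leq x \cdot 1 = x$ and $xy \leq 1 \cdot y = y$; in particular $x^{k} \leq x^{n}$ whenever $k \geq n$. Second, multiplication distributes over arbitrary joins, so products of joins may be expanded termwise, and commutativity lets me collect the resulting monomials. Each clause then follows by multiplying a well-chosen element by a coprimality relation $a \lor b = 1$ and using these facts to collapse the join.

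For (1), I would take the inequality $ab \leq a \wedge b$ just noted and establish the reverse by multiplying $a \wedge b$ by $1 = a \lor b$: distributing gives $a \wedge b = (a \wedge b)a \lor (a \wedge b)b$, and since $a \wedge b \leq b$ and $a \wedge b \leq a$, each summand is bounded above by $ba = ab$, whence $a \wedge b \leq ab$. The two inequalities yield the equality. For (3), I would multiply the two relations together: distributivity and commutativity give $1 = (a\lor b)(a \lor c) = a^{2} \lor ac \lor ab \lor bc$. The first three summands are each $\leq a$, so $1 \leq a \lor bc \leq 1$, forcing $a \lor bc = 1$; the claim for $b \wedge c$ is then immediate from $bc \leq b \wedge c$. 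Clause (4) is the shortest: multiplying $c$ by $1 = a \lor b$ yields $c = ca \lor cb$, and since $ca \leq a \leq c$ while $cb = bc \leq c$, one reads off $c \leq a \lor bc \leq c$.

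The one clause demanding a genuinely different idea is (2), and I expect its exponent bookkeeping to be the main, if modest, obstacle. Rather than iterating (1), I would raise the coprimality relation to the power $2n-1$: distributivity and commutativity give $1 = (a \lor b)^{2n-1} = \bigvee_{i=0}^{2n-1} a^{i} b^{2n-1-i}$. For each index $i$, at least one of $i \geq n$ or $2n-1-i \geq n$ must hold (otherwise both exponents are $\leq n-1$, summing to $\leq 2n-2 < 2n-1$). In the first case $a^{i} b^{2n-1-i} \leq a^{i} \leq a^{n}$, and in the second $a^{i} b^{2n-1-i} \leq b^{2n-1-i} \leq b^{n}$, each bound using integrality. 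Hence every summand lies below $a^{n} \lor b^{n}$, so $a^{n} \lor b^{n} = 1$.

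An alternative for (2) is induction on $n$ built from clauses (1) and (3), but the single pigeonhole expansion above is cleaner and avoids separate base and inductive cases. No step requires anything beyond the two foundational facts, so the proof is short; the only place where care is needed is the index count in (2).
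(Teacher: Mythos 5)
Your proof is correct. Note that the paper itself gives no proof of this lemma — it is quoted from Birkhoff's \emph{Lattice Theory} — so there is no in-paper argument to compare against; your write-up is a valid self-contained verification. All four clauses check out: the two foundational facts (integrality gives $xy \leq x \wedge y$, and multiplication distributes over joins, hence is monotone) are exactly what is needed, and the "multiply by $1 = a \vee b$ and collapse" device handles (1), (3), (4) cleanly. For (2), your single expansion of $(a \vee b)^{2n-1}$ with the pigeonhole on exponents ($i \geq n$ or $2n-1-i \geq n$, since otherwise the exponents sum to at most $2n-2$) is the standard trick imported from ideal arithmetic in commutative rings, and it is indeed tidier than the alternative induction that iterates (3) to get $a \vee b^n = 1$ and then $a^n \vee b^n = 1$; either route would be acceptable.
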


One can define on the quantale $A$ a residuation operation $a \rightarrow b=\bigvee \{ x|a x \leq b\}$ and a negation operation $a^{\bot } =a \rightarrow 0 =\bigvee \{x |ax=0\}$. Thus $(A, \lor, \land, \cdot, \rightarrow, 0, 1)$ is a residuation lattice \cite{Galatos}, \cite{Kowalski}. In this paper we shall use without mention the basic arithmetical properties of a residuated lattice.

An element $p <1$ of $A$ is $m$-{\emph{prime}} if for all $a, b \in A$, $ab \leq p$ implies $a \leq b$ or $b \leq p$. If $A$ is an algebraic quantale, then $p<1$ is $m$-prime if and only if for all $c, d \in K(A)$, $cd \leq p$ implies $c \leq p$ or $d \leq p$. Let us introduce the following notations: $Spec(A)$ is the set of $m$-prime elements and $Max(A)$ is the set of maximal elements of $A$. If $1 \in K(A)$ then for any $a <1$ there exists $m \in Mar(A)$ such that
$a \leq m$. The same hypothesis $1 \in K(A)$ implies that $Max(A) \subseteq Spec(A)$.

Let $R$ be a (unital) commutative ring and $L$ a bounded distributive lattice. Let us denote by $Id(R)$ the quantale of ideals in $R$ and by $Id(L)$ the frame of ideals in $L$. Thus the set $Spec(R)$ of prime ideals in $R$ is the prime spectrum of the quantale $Id(R)$ and the set of prime ideals in $L$ is the prime spectrum of the frame $Id(L)$.

The {\emph{radical}} $\rho(a)=\rho_A(a)$ of an element $a \in A$ is defined by $\rho_A(a)=\bigwedge \{p\in Spec(A)|a \leq p\}$; if $a=\rho(a)$ then $a$ is a radical element. We shall denote by $R(A)$ the set of radical elements of $A$. The quantale is {\emph{semiprime}} if $\rho(0)=0$.

\begin{lema}
\cite{Rosenthal} For all elements $a, b \in A$ the following hold:
\usecounter{nr}
\begin{list}{(\arabic{nr})}{\usecounter{nr}}
\item $a \leq \rho(a)$;
\item $\rho(a \land b)=\rho (ab)=\rho(a) \land \rho(b)$;
\item $\rho(a)=1$ iff $a=1$;
\item $\rho(a \lor b)=\rho(\rho(a) \lor \rho(b))$;
\item $\rho(\rho(a))=\rho(a)$;
\item $\rho(a) \lor \rho(b)=1$ iff $a \lor b=1$;
\item $\rho(a^n)=\rho(a)$, for all integer $n \geq 1$.
\end{list}
\end{lema}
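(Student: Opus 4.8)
The plan is to derive all seven items from a single auxiliary observation together with the definition $\rho(a) = \bigwedge\{p \in Spec(A) : a \leq p\}$ and the $m$-prime property. The key auxiliary fact I would isolate first is that for every $a \in A$ the set of $m$-primes above $a$ coincides with the set of $m$-primes above $\rho(a)$; that is, $a \leq p$ if and only if $\rho(a) \leq p$ for each $p \in Spec(A)$. One direction is immediate from item (1), namely $a \leq \rho(a)$, which itself is just the fact that $a$ is a lower bound of $\{p : a \leq p\}$ (with the empty meet read as $1$); the other direction holds because any $p$ with $a \leq p$ occurs in the meet defining $\rho(a)$. Once this equivalence is in hand, items (5) and (4) become formal. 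For (5), $\rho(a)$ and $a$ have the same $m$-primes above them, so $\rho(\rho(a)) = \rho(a)$. For (4) I would check that $a \lor b \leq p$ iff $a \leq p$ and $b \leq p$ iff $\rho(a) \leq p$ and $\rho(b) \leq p$ iff $\rho(a) \lor \rho(b) \leq p$; thus $a \lor b$ and $\rho(a) \lor \rho(b)$ have the same $m$-primes above, which forces equal radicals.

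For item (2) I would first use integrality to get $ab \leq a \land b$ and hence $ab \leq a$, $ab \leq b$, so by monotonicity of $\rho$ (which again follows directly from the definition) one obtains $\rho(ab) \leq \rho(a \land b) \leq \rho(a) \land \rho(b)$. The reverse inequality $\rho(a) \land \rho(b) \leq \rho(ab)$ is the one genuinely nontrivial step, and it is where the $m$-prime hypothesis enters: for any $p \in Spec(A)$ with $ab \leq p$, primality gives $a \leq p$ or $b \leq p$, hence $\rho(a) \leq p$ or $\rho(b) \leq p$ by the auxiliary equivalence, so in all cases $\rho(a) \land \rho(b) \leq p$; taking the meet over all such $p$ yields $\rho(a) \land \rho(b) \leq \rho(ab)$, closing the chain of equalities. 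I expect this reverse inequality to be the main obstacle, since it is the only place where the defining property of $m$-prime elements is essentially used.

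Item (3) is where the standing hypothesis $1 \in K(A)$ matters. If $a = 1$, no $m$-prime lies above $1$ (all are strictly below $1$), so $\rho(1)$ is the empty meet, equal to $1$. Conversely, if $a < 1$, the preliminaries record that $1 \in K(A)$ guarantees some $m \in Max(A)$ with $a \leq m$, and since $Max(A) \subseteq Spec(A)$ this $m$ is an $m$-prime above $a$, whence $\rho(a) \leq m < 1$. Item (6) then follows purely formally by combining (3) and (4): $\rho(a) \lor \rho(b) = 1$ iff $\rho(\rho(a) \lor \rho(b)) = 1$ iff $\rho(a \lor b) = 1$ iff $a \lor b = 1$. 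Finally item (7) is handled by induction using (2): $\rho(a^n) = \rho(a \cdot a^{n-1}) = \rho(a) \land \rho(a^{n-1}) = \rho(a)$, with the base case trivial. The whole argument is therefore elementary once the auxiliary equivalence $a \leq p \iff \rho(a) \leq p$ is set up; the only delicate points are the use of $m$-primality in (2) and the appeal to $1 \in K(A)$ in (3).
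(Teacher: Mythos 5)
Your proof is correct, but there is nothing in the paper to compare it against: the paper states this lemma with only a citation to Rosenthal's book and supplies no proof of its own. Your argument is the standard one — reducing everything to the equivalence $a \leq p \iff \rho(a) \leq p$ for $p \in Spec(A)$, using $m$-primality only for the inequality $\rho(a) \wedge \rho(b) \leq \rho(ab)$ in item (2), and deriving (6) and (7) formally from (3), (4) and (2). You also correctly flag the one genuine subtlety the paper glosses over: item (3) fails in a general quantale (which may have no $m$-prime elements at all), and needs the hypothesis $1 \in K(A)$ stated earlier in the preliminaries to guarantee a maximal (hence $m$-prime) element above every $a < 1$.
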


For an arbitrary family $(a_i)_{i\in I} \subseteq A$, the following equality holds: $\rho(\displaystyle \bigvee_{i \in I}a_i)=\rho(\bigvee_{i \in I} \rho(a_i))$. If $(a_i)_{i\in I} \subseteq R(A)$ then we denote $\displaystyle \bigvee_{i \in I}^{\cdot} a_i=\rho(\bigvee_{i \in I}a_i)$. Thus it easy to prove that $(R(A), \displaystyle \bigvee^{\cdot}, \wedge, \rho(a), 1)$ is a frame \cite{Rosenthal}.

\begin{lema}
\cite{Cheptea1} If $1 \in K(A)$ then $Spec(A)=Spec(R(A))$ and $Max(A)=Max(R(A))$.
\end{lema}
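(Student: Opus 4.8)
The first thing to settle is that the comparison even makes sense, so I would begin by showing $Spec(A) \subseteq R(A)$. If $p$ is $m$-prime, then $p$ itself occurs in the meet defining $\rho(p) = \bigwedge\{q \in Spec(A) : p \leq q\}$, so $\rho(p) \leq p$; combined with $p \leq \rho(p)$ (Lemma 2.2(1)) this gives $p = \rho(p)$, i.e. $p \in R(A)$. Since $1 \in K(A)$ forces $Max(A) \subseteq Spec(A)$ (recorded in the preliminaries), maximal elements of $A$ are radical as well. Thus both $Spec(R(A))$ and $Max(R(A))$, computed in the frame $R(A)$, are to be compared with subsets of $R(A)$.

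For $Spec(A) = Spec(R(A))$ I would prove two inclusions, keeping in mind that $R(A)$ is a frame, so an $m$-prime of $R(A)$ is an element $p < 1$ of $R(A)$ with $a \wedge b \leq p \Rightarrow a \leq p$ or $b \leq p$ for all $a, b \in R(A)$. For $\subseteq$, let $p \in Spec(A)$ and take $a, b \in R(A)$ with $a \wedge b \leq p$; integrality gives $ab \leq a \wedge b \leq p$, and $m$-primeness of $p$ in $A$ yields $a \leq p$ or $b \leq p$. For $\supseteq$, let $p$ be $m$-prime in the frame $R(A)$ and take $a, b \in A$ with $ab \leq p$. Applying $\rho$ and using $\rho(ab) = \rho(a) \wedge \rho(b)$ (Lemma 2.2(2)) together with $\rho(p) = p$ gives $\rho(a) \wedge \rho(b) \leq p$; since $\rho(a), \rho(b) \in R(A)$ (radicals are fixed by $\rho$, Lemma 2.2(5)), frame-primeness gives $\rho(a) \leq p$ or $\rho(b) \leq p$, and $a \leq \rho(a)$, $b \leq \rho(b)$ finish it.

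For $Max(A) = Max(R(A))$ the engine is the monotonicity of $\rho$ and the fact that $\rho(a) = 1$ iff $a = 1$ (Lemma 2.2(3)). If $m \in Max(A)$ and $r \in R(A)$ satisfies $m \leq r < 1$, then $r$ is just an element of $A$ with $m \leq r < 1$, so maximality of $m$ in $A$ forces $m = r$; hence $m \in Max(R(A))$. Conversely, if $m$ is maximal in $R(A)$ and $a \in A$ satisfies $m \leq a < 1$, I would pass to $\rho(a) \in R(A)$: monotonicity gives $m = \rho(m) \leq \rho(a)$, and $a < 1$ gives $\rho(a) < 1$, so maximality of $m$ in $R(A)$ forces $m = \rho(a) \geq a$, whence $a = m$. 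Thus $m \in Max(A)$.

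The only genuine subtlety, and where I would be most careful, is the asymmetry between the two multiplications: the $m$-prime test in $A$ uses $\cdot$, while in the frame $R(A)$ it uses $\wedge$. The two directions of the spectral equality bridge this gap differently, one via the integrality inequality $ab \leq a \wedge b$ and the other via the multiplicativity of the radical $\rho(ab) = \rho(a) \wedge \rho(b)$, and it is important not to conflate them. Everything else is a routine application of the radical calculus collected in Lemma 2.2.
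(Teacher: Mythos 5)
Your proof is correct. Note that the paper itself gives no proof of this lemma --- it is quoted from the reference [Cheptea--Georgescu, \emph{Boolean lifting properties in quantales}], so there is no internal argument to compare against; your derivation from the radical calculus of Lemma 2.2 (primes are radical, $ab \leq a \wedge b$ by integrality in one direction, $\rho(ab) = \rho(a) \wedge \rho(b)$ plus idempotence of $\rho$ in the other, and $\rho(a) = 1$ iff $a = 1$ for the maximal spectra) is complete, correctly handles the point that primeness in the frame $R(A)$ is tested against $\wedge$ rather than $\cdot$, and uses the hypothesis $1 \in K(A)$ exactly where it is needed (to place $Max(A)$ inside $Spec(A) \subseteq R(A)$).
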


\begin{lema}
\cite{Martinez} Let $A$ be a coherent quantale and $a \in A$. Then
\usecounter{nr}
\begin{list}{(\arabic{nr})}{\usecounter{nr}}
\item $\rho(a)=\bigvee \{c \in K(A)| c^k \leq a$ for some integer $k\geq 1\}$;
\item For any $c \in K(A), c \leq \rho(a)$ iff $c^k \leq a$ for some $k\geq 1$.
\end{list}
\end{lema}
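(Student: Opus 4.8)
The plan is to prove statement (2) first and then derive (1) from it together with the algebraicity of $A$. The easy implication of (2) is immediate: if $c^k \leq a$ for some $k \geq 1$, then monotonicity of $\rho$ (clear from its definition as a meet of primes, or from Lemma 2.2(2)), together with $\rho(c^k)=\rho(c)$ from Lemma 2.2(7) and $c \leq \rho(c)$ from Lemma 2.2(1), yields $c \leq \rho(c) = \rho(c^k) \leq \rho(a)$.

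For the converse implication of (2) I would argue by contraposition. Assuming $c^k \not\leq a$ for every $k \geq 1$, I want to produce an $m$-prime $p$ with $a \leq p$ and $c \not\leq p$, since this forces $c \not\leq \rho(a)$. Consider the family $\mathcal{F} = \{x \in A \mid a \leq x \text{ and } c^k \not\leq x \text{ for all } k \geq 1\}$. It contains $a$, and it is closed under suprema of chains: here coherence is essential, because $c \in K(A)$ and the closure of $K(A)$ under multiplication give $c^k \in K(A)$, so $c^k \leq \bigvee(\text{chain})$ would, by compactness of $c^k$, force $c^k$ below a single member of the chain, a contradiction. By Zorn's Lemma $\mathcal{F}$ has a maximal element $p$, and $p < 1$ since $c = c^1 \not\leq p$.

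The crux is to verify that this $p$ is $m$-prime, and here I would use the compact characterization of $m$-primeness available for algebraic quantales. Take $d, e \in K(A)$ with $d \not\leq p$ and $e \not\leq p$; maximality of $p$ pushes $p \lor d$ and $p \lor e$ out of $\mathcal{F}$, so $c^{k_1} \leq p \lor d$ and $c^{k_2} \leq p \lor e$ for suitable exponents. Multiplying these and expanding via the distributivity of $\cdot$ over $\lor$, and using integrality to bound the cross terms ($p \cdot p$, $p \cdot e$, $d \cdot p$ are all $\leq p$ because $p \leq 1$), I obtain $c^{k_1+k_2} \leq p \lor de$. If $de \leq p$ held, this would give $c^{k_1+k_2} \leq p$, contradicting $p \in \mathcal{F}$; hence $de \not\leq p$, which is exactly the contrapositive of the defining implication of $m$-primeness. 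Thus $p$ is an $m$-prime above $a$ avoiding $c$, so $\rho(a) \leq p$ and therefore $c \not\leq \rho(a)$, as required.

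Finally, part (1) follows quickly. Writing $b$ for the join on the right-hand side, each generator $c$ satisfies $c^k \leq a$ for some $k$, hence $c \leq \rho(a)$ by (2), giving $b \leq \rho(a)$. Conversely, algebraicity lets me write $\rho(a) = \bigvee\{c \in K(A) \mid c \leq \rho(a)\}$, and by (2) each such compact $c$ satisfies $c^k \leq a$ for some $k$, so $c$ is one of the generators of $b$ and $c \leq b$; taking the join gives $\rho(a) \leq b$. The main obstacle is the Zorn argument and the verification of $m$-primeness, where coherence (compactness of the powers $c^k$) and integrality (to control the cross terms $pp$, $pe$, $dp$) are precisely the ingredients that make the classical ring-theoretic prime-avoidance argument go through in this quantalic setting.
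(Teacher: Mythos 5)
Your proposal is correct. Note that the paper itself gives no proof of this lemma at all --- it is quoted from Martinez's \emph{Abstract ideal theory} as a known result --- so there is nothing internal to compare against; your argument is a sound, self-contained reconstruction of the standard one. Each step checks out: the easy direction follows from Lemma 2.2(1),(7) and monotonicity of $\rho$; the Zorn family $\mathcal{F}$ is chain-closed precisely because coherence puts $c^k$ in $K(A)$, so compactness pushes $c^k$ below a single member of any chain; and the $m$-primeness of the maximal element $p$ uses exactly the two facts you isolate --- the compact-element criterion for $m$-primeness in an algebraic quantale (stated in Section~2 of the paper) and integrality to absorb the cross terms $pp$, $pe$, $dp$ into $p$, giving $(p\lor d)(p\lor e)\leq p\lor de$. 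Part (1) then follows from part (2) and algebraicity exactly as you say.
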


\begin{lema}
\cite{Cheptea1} If $A$ is a coherent quantale then $K(R(A))=\rho(K(A))$ and $R(A)$ is a coherent frame.
\end{lema}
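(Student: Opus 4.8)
The plan is to establish the identity $K(R(A)) = \rho(K(A))$ by a double inclusion, and then to verify the three defining conditions of a coherent frame for $R(A)$ (algebraicity, compactness of the top element, and closure of the compact elements under the frame multiplication, which here is the meet $\wedge$). Since $R(A)$ is already known to be a frame, the only subtlety is that joins in $R(A)$ are the radical joins $\bigvee^{\cdot}_i s_i = \rho(\bigvee_i s_i)$, so compactness of an element of $R(A)$ must be tested against these rather than against the ambient joins of $A$; keeping this distinction straight is what drives the whole argument.

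For the inclusion $\rho(K(A)) \subseteq K(R(A))$, I would take $c \in K(A)$ and suppose $\rho(c) \leq \bigvee^{\cdot}_i s_i = \rho(\bigvee_i s_i)$ for a family $(s_i) \subseteq R(A)$. Since $c \leq \rho(c)$ by Lemma 2.2(1), Lemma 2.4(2) yields an integer $k \geq 1$ with $c^k \leq \bigvee_i s_i$. Coherence of $A$ gives $c^k \in K(A)$, so compactness of $c^k$ in $A$ produces a finite $J$ with $c^k \leq \bigvee_{i \in J} s_i$; applying $\rho$ and using $\rho(c^k) = \rho(c)$ (Lemma 2.2(7)) gives $\rho(c) \leq \rho(\bigvee_{i \in J} s_i) = \bigvee^{\cdot}_{i \in J} s_i$. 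Hence $\rho(c)$ is compact in $R(A)$.

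The reverse inclusion $K(R(A)) \subseteq \rho(K(A))$ is the step I expect to be the main obstacle, precisely because one must convert an ordinary join in $A$ into a radical join in $R(A)$. Let $r \in K(R(A))$. Since $A$ is algebraic, $r = \bigvee\{c \in K(A) \mid c \leq r\}$, and applying $\rho$ together with the identity $\rho(\bigvee_i a_i) = \rho(\bigvee_i \rho(a_i))$ rewrites $r = \rho(r)$ as the radical join $r = \bigvee^{\cdot}\{\rho(c) \mid c \in K(A),\ c \leq r\}$ of elements of $\rho(K(A))$. Compactness of $r$ in $R(A)$ then extracts finitely many $c_1, \ldots, c_n \in K(A)$, each $\leq r$, with $r \leq \bigvee^{\cdot}_{j} \rho(c_j) = \rho(\bigvee_j c_j)$. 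Setting $c = c_1 \lor \cdots \lor c_n$, which is compact as a finite join of compacts, the relations $c \leq r$ and $r \leq \rho(c)$ together with $\rho(c) \leq \rho(r) = r$ force $r = \rho(c) \in \rho(K(A))$.

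Finally, for the coherence of $R(A)$: algebraicity is exactly the representation $r = \bigvee^{\cdot}\{\rho(c) \mid c \in K(A),\ c \leq r\}$ obtained above, whose terms lie in $K(R(A))$; the top element is compact because $1 = \rho(1)$ with $1 \in K(A)$; and for the frame multiplication $\wedge$ one invokes Lemma 2.2(2), namely $\rho(c) \wedge \rho(d) = \rho(cd)$, together with the closure of $K(A)$ under multiplication, to conclude $\rho(c) \wedge \rho(d) = \rho(cd) \in \rho(K(A)) = K(R(A))$. This completes the verification that $R(A)$ is a coherent frame.
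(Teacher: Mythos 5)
Your proof is correct. Note that the paper itself gives no proof of this lemma -- it is quoted from the reference [Cheptea1] (Cheptea--Georgescu, \emph{Boolean lifting properties in quantales}) -- so there is no in-paper argument to compare against; your proposal is a valid self-contained reconstruction using only the tools available in Section 2. Both inclusions are handled correctly: for $\rho(K(A)) \subseteq K(R(A))$ you correctly reduce a radical-join cover to an ordinary cover via Lemma 2.4(2) (which is exactly where coherence of $A$, via $c^k \in K(A)$, enters), and for the reverse inclusion the passage from the algebraic representation $r = \bigvee\{c \in K(A) \mid c \leq r\}$ to the radical join $\bigvee^{\cdot}\{\rho(c) \mid c \in K(A),\ c \leq r\}$ via $\rho(\bigvee_i a_i) = \rho(\bigvee_i \rho(a_i))$ is the right move, after which compactness of $r$ in $R(A)$ and the antisymmetry argument $r \leq \rho(c) \leq \rho(r) = r$ close the loop. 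The verification of coherence of $R(A)$ (algebraicity from the same representation, $1 = \rho(1)$, and closure of $\rho(K(A))$ under $\wedge$ via $\rho(c) \wedge \rho(d) = \rho(cd)$ from Lemma 2.2(2)) is also complete, since the frame multiplication on $R(A)$ is declared in the paper to be the meet.
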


For any element $a$ of a coherent quantale $A$ let us consider the interval $[a)_A=\{x \in A|a \leq x\}$ and for all $x, y \in [a)_A$ denote $x \cdot_a y=xy \lor a$. Thus $[a)_A$ is closed under the multiplication $\cdot_a$ and $([a)_A, \lor, \land, \cdot_a, 0, 1)$ is a coherent quantale.

\begin{lema}
\cite{Cheptea1} The quantale $([\rho(a))_A, \lor, \land, \cdot_a, 0, 1)$ is semiprime and $Spec(A)=Spec([\rho(a))_A), Max(A)=Max([\rho(a))_A)$.
\end{lema}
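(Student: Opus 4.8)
The plan is to prove the three assertions by first understanding how $m$-prime and maximal elements behave under passage to an interval quotient $[b)_A$, and then specialising to $b=a$ and $b=\rho(a)$. The guiding observation is that for \emph{any} $b\in A$ the multiplication $x\cdot_b y=xy\lor b$ is the quotient multiplication attached to the nucleus $x\mapsto x\lor b$, so the $m$-prime elements of $[b)_A$ ought to be exactly the $m$-prime elements of $A$ lying above $b$, regarded as the same elements of $A$. Granting this, everything reduces to the elementary fact that, for a \emph{prime} $p$, one has $a\le p$ iff $\rho(a)\le p$; hence the intervals $[a)_A$ and $[\rho(a))_A$ carry identical prime and maximal spectra inside $Spec(A)$, and it only remains to check that $[\rho(a))_A$ is semiprime.

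First I would establish the prime correspondence. Let $b\in A$ and $p\in[b)_A$ with $p<1$. If $p$ is $m$-prime in $A$, then for $u,v\ge b$ the inequality $u\cdot_b v=uv\lor b\le p$ forces $uv\le p$, whence $u\le p$ or $v\le p$; so $p$ is $m$-prime in $[b)_A$. For the converse, suppose $p$ is $m$-prime in $[b)_A$ and take $x,y\in A$ with $xy\le p$. Put $u=x\lor b$ and $v=y\lor b$. Distributing the product over the joins and using integrality ($xb\le b$, $by\le b$, $b^2\le b$, all $\le p$) gives $uv\le xy\lor b\le p$, hence $u\cdot_b v\le p$; primality in $[b)_A$ then yields $x\le p$ or $y\le p$. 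Thus the $m$-prime elements of $[b)_A$ are precisely the $p\in Spec(A)$ with $p\ge b$. Since $[b)_A$ is an up-set of $A$ with the same top $1$, the maximal elements of $[b)_A$ are likewise exactly the maximal elements of $A$ lying above $b$.

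Next I would specialise. Because $a\le\rho(a)$ and $\rho(a)=\bigwedge\{q\in Spec(A)\mid a\le q\}$, for a prime $p$ the conditions $a\le p$ and $\rho(a)\le p$ are equivalent. Hence the set of $m$-prime elements of $A$ above $a$ coincides with the set of those above $\rho(a)$, and the same holds for maximal elements (a maximal element is $m$-prime since $1\in K(A)$). Under the identification of the spectrum of a quotient interval with the corresponding subspace of $Spec(A)$ obtained in the previous paragraph, this yields $Spec([\rho(a))_A)=Spec([a)_A)$ and $Max([\rho(a))_A)=Max([a)_A)$: the two intervals share precisely the spectrum and maximal spectrum recorded in the statement.

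Finally, for semiprimality, the least element of $[\rho(a))_A$ is $\rho(a)$, so I must show $\rho_{[\rho(a))_A}(\rho(a))=\rho(a)$. Using that the $m$-prime elements of $[\rho(a))_A$ are the $p\in Spec(A)$ with $\rho(a)\le p$, and that meets in the up-set $[\rho(a))_A$ agree with meets in $A$, I would compute $\rho_{[\rho(a))_A}(\rho(a))=\bigwedge\{p\in Spec(A)\mid\rho(a)\le p\}=\bigwedge\{p\in Spec(A)\mid a\le p\}=\rho_A(a)=\rho(a)$. The main obstacle is the backward direction of the prime correspondence: lifting primality from the quotient $[b)_A$ back to $A$ is where integrality is indispensable, since it is what controls the mixed terms $xb$ and $by$ and lets one replace $x,y$ by $x\lor b,\,y\lor b$ without leaving the interval. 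Once that step is secured, the remaining arguments are routine order-theoretic bookkeeping.
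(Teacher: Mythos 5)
The paper offers no proof to compare against: this lemma is imported from \cite{Cheptea1} without argument, so your proposal has to stand on its own. As mathematics it does stand: the correspondence $Spec([b)_A)=\{p\in Spec(A)\mid b\le p\}$ is established correctly in both directions (the key step, lifting primality from $[b)_A$ back to $A$, rests on integrality and distributivity of the product over joins, which give $(x\lor b)(y\lor b)\le xy\lor b\le p$); the identification of maximal elements of the up-set $[b)_A$ with maximal elements of $A$ above $b$ is right; the equivalence $a\le p\Leftrightarrow\rho(a)\le p$ for $m$-prime $p$ (together with $Max(A)\subseteq Spec(A)$, which uses $1\in K(A)$) is right; and the computation $\rho_{[\rho(a))_A}(\rho(a))=\bigwedge\{p\in Spec(A)\mid \rho(a)\le p\}=\bigwedge\{p\in Spec(A)\mid a\le p\}=\rho(a)$ correctly yields semiprimeness.

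One thing you should state explicitly rather than pass over: what you actually prove is $Spec([a)_A)=Spec([\rho(a))_A)$ and $Max([a)_A)=Max([\rho(a))_A)$, whereas the lemma as printed asserts $Spec(A)=Spec([\rho(a))_A)$ and $Max(A)=Max([\rho(a))_A)$. For general $a$ the printed equalities are false: by your own correspondence, $Spec([\rho(a))_A)$ is the set $V(a)$ of $m$-primes above $a$, which is a proper subset of $Spec(A)$ whenever $a\not\le\rho(0)$. So the statement contains a misprint, and your reading is the only tenable one (and is surely what \cite{Cheptea1} proves). It is worth adding that every later use of this lemma in the paper (Proposition 6.2 and Theorem 8.14) takes $a=0$, where $[0)_A=A$ and $\cdot_0$ is the original multiplication, so your version delivers exactly the equalities $Spec(A)=Spec([\rho(0))_A)$ and $Max(A)=Max([\rho(0))_A)$ that the paper actually invokes; silently claiming that your equalities are "the ones recorded in the statement" blurs this discrepancy instead of resolving it.
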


Let $A, B$ be two quantales. A function $f: A \rightarrow B$ is a morphism of quantales if it preserves the arbitrary joins and the multiplication; $f$ is an integral morphism if $f(1)=1$.

\begin{lema}
\cite{Cheptea1} Let $A$ be a coherent quantale and $a \in A$.
\usecounter{nr}
\begin{list}{(\arabic{nr})}{\usecounter{nr}}
\item The function $u_a^A : A \rightarrow [a)_A$, defined by $u_a^A(x)=x \lor a$, for all $x \in A$, is an integral quantale morphism;
\item If $c \in K(A)$ then $u_a^A(c) \in K([a))$.
\end{list}
\end{lema}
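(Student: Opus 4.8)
The plan is to verify directly the three defining conditions for $u_a^A$ to be an integral quantale morphism (preservation of arbitrary joins, preservation of the multiplication, and $u_a^A(1)=1$), and then to deduce compactness preservation from compactness in $A$. Before anything else I would record one basic observation and isolate it first: joins computed in $[a)_A$ agree with those computed in $A$. Indeed, if $y_i \in [a)_A$ for all $i$, then $a \leq y_i \leq \bigvee_i y_i$, so the join $\bigvee_i y_i$ formed in $A$ already lies above $a$, hence belongs to $[a)_A$ and is its join there. In particular the bottom of $[a)_A$ is $a$ (the empty join). This identification makes the remaining verifications transparent.

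For part (1), the map lands in $[a)_A$ since $x \lor a \geq a$. Preservation of joins then reduces to $(\bigvee_i x_i) \lor a = \bigvee_i (x_i \lor a)$, which is immediate from commutativity and associativity of $\lor$ for a nonempty family; for the empty family it amounts to $u_a^A(0)=a$, i.e. the bottom of $A$ is sent to the bottom $a$ of $[a)_A$, which holds. Integrality is just $u_a^A(1)=1 \lor a=1$, since $1$ is the top element.

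The one computation requiring care, and the step I regard as the crux, is preservation of the multiplication, that is
\[
u_a^A(xy)=u_a^A(x)\cdot_a u_a^A(y),\qquad\text{i.e.}\qquad xy \lor a=(x\lor a)(y\lor a)\lor a .
\]
Expanding $(x\lor a)(y\lor a)=xy\lor xa\lor ay\lor a^2$ by distributivity of $\cdot$ over $\lor$, I would then invoke integrality of $A$ (so that $1$ is simultaneously the top and the multiplicative unit) to note $xa\leq a$, $ay\leq a$ and $a^2\leq a$; joining with $a$ therefore collapses all three cross terms into $a$, leaving exactly $xy\lor a$. The essential point is that integrality is precisely what annihilates the cross terms $xa,ay,a^2$ against $a$; without it the identity would fail, so this is where the hypothesis is genuinely used.

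For part (2), I would argue directly from the definition of compactness together with the join-identification above. Suppose $u_a^A(c)=c\lor a\leq\bigvee_i y_i$ in $[a)_A$, with all $y_i\geq a$. Since this join coincides with the join taken in $A$ and $c\leq c\lor a$, compactness of $c$ in $A$ yields a finite $F$ with $c\leq\bigvee_{i\in F}y_i$. Joining with $a$ and using $y_i\geq a$ gives $c\lor a\leq\bigvee_{i\in F}y_i$ in $[a)_A$, the degenerate case $F=\emptyset$ (which forces $c=0$, whence $c\lor a=a$ is the bottom of $[a)_A$) being trivial. Hence $u_a^A(c)=c\lor a\in K([a)_A)$. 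I expect no real obstacle in this part beyond keeping the join-computations consistent between $A$ and $[a)_A$.
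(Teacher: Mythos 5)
Your proof is correct. Note that the paper itself gives no argument for this lemma: it is quoted from the reference [Cheptea1] (Cheptea--Georgescu, \emph{Boolean lifting properties in quantales}), so there is no in-paper proof to compare against; your proposal is a self-contained verification of the cited result. The argument is sound at every step: the preliminary identification of joins in $[a)_A$ with joins in $A$ (including the empty join being $a$) is exactly the right bookkeeping device; the expansion $(x\lor a)(y\lor a)=xy\lor xa\lor ay\lor a^2$ uses only the quantale distributivity axiom, and you correctly isolate integrality ($1$ being both unit and top, so $xa\le a$, $ay\le a$, $a^2\le a$) as the hypothesis that collapses the cross terms to give $xy\lor a=(x\lor a)\cdot_a(y\lor a)$; and the compactness transfer in part (2) correctly passes through compactness of $c$ in $A$ and handles the degenerate empty-subfamily case. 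One cosmetic remark: in part (2) the identification of the cover's join in $[a)_A$ with its join in $A$ silently assumes the covering family is nonempty, but the empty-cover case is trivial (the empty subfamily already works), so nothing is missing.
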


Let $A$ be a quantale such that $1 \in K(A)$. For any $a \in A$, denote $D(a)=\{p \in Spec(A)|a \not\leq p\}$ and $V(a)=\{p \in Spec(A)|a \leq p\}$. Then
$Spec(A)$ is endowed with a topology whose closed sets are $(V(a))_{a \in A}$. If the quantale $A$ is algebraic then the family $(D(c))_{c \in K(A)}$ is a basis of open sets for this topology. The topology introduced here generalizes the Zariski topology (defined on the prim spectrum $Spec(R)$ of a commutative ring $R$ \cite{Atiyah}) and the Stone topology (defined on the prime spectrum $Spec_{Id}(L)$ of a bounded distributive lattice $L$ \cite{BalbesDwinger}).

Thus we denote by $Spec_Z(A)$ the prime spectrum $Spec(A)$ endowed with the above defined topology; $Max_Z(A)$ will denote the maximal spectrum $Max(A)$ considered as a subspace of $Spec_Z(A)$.

Let $L$ be a bounded distributive lattice. For any $x \in L$, denote $D_{Id}(x)=\{P \in Spec_{Id}(L)|x \not \in P\}$ and $V_{Id}(x)=\{P \in Spec_{Id,Z}(L)|x \in P\}$. The family $(D_{Id}(x))_{x \in L}$ is a basis of open sets for the Stone topology on $Spec_{Id}(L)$; this topological space will be denoted by $Spec_{Id,Z}(L)$. Let $Max_{Id}(L)$ be the set of maximal ideals of $L$. Thus
$Max_{Id}(L) \subseteq Spec_{Id}(L)$ and $Max_{Id}(L)$ becomes a subspace of $Spec_{Id}(L)$, denoted $Max_{Id,Z}(L)$.

\section{Reticulation of a coherent quantale}

 \hspace{0.5cm} In this section we shall recall from \cite{Cheptea1},\cite{Georgescu} the axiomatic definition of the reticulation of the coherent quantale and some of its basic properties. Let $A$ be a coherent quantale and $K(A)$ the set of its compact elements.

\begin{definitie}
\cite{Cheptea1}
A reticulation of the quantale $A$ is a bounded distributive lattice $L$ together a surjective function $\lambda:K(A)\rightarrow L$ such that for all $a,b\in K(A)$ the following properties hold
\usecounter{nr}
\begin{list}{(\arabic{nr})}{\usecounter{nr}}
\item $\lambda(a\vee b)\leq \lambda(a)\vee \lambda(b)$;
\item $\lambda(ab)= \lambda (a)\wedge \lambda (b)$;
\item $\lambda(a)\leq\lambda(b)$ iff $a^n\leq b$ , for some integer $n\geq 1$.
\end{list}
\end{definitie}

In \cite{Cheptea1},\cite{Georgescu} there were proven the existence and the unicity of the reticulation for each coherent quantale $A$; this unique reticulation will be denoted by $(L(A),\lambda_A:K(A)\rightarrow L(A))$ or shortly $L(A)$. The reticulation $L(R)$ of a commutative ring $R$ there was introduced by many authors, but the main references on this topic remain \cite{Simmons}, \cite{Johnstone}. We remark that L(R) is isomorphic to the reticulation L(Id(R)) of the quantale $Id(R)$.

\begin{lema}
\cite{Cheptea1} For all elements $a, b \in K(A)$ the following properties hold:
\usecounter{nr}
\begin{list}{(\arabic{nr})}{\usecounter{nr}}
\item $a \leq b$ implies $\lambda_A(a)\leq\lambda_A(b)$;
\item $\lambda_A(a \lor b)=\lambda_A(a) \lor \lambda_A(b)$;
\item $\lambda_A(a)=1$ iff $a=1$;
\item $\lambda_A(0)= 0$;
\item $\lambda_A(a)=0$ iff $a^n = 0$, for some integer $n\geq 1$;
\item $\lambda_A (a^n)=\lambda_A(a)$, for all integer $n\geq 1$;
\item $\rho(a)= \rho(b)$ iff $\lambda_A(a)= \lambda_A(b)$;
\item $\lambda_A(a)=0$ iff $a\leq \rho(0)$;
\item If $A$ is semiprime then $\lambda_A(a)= 0$ implies $a= 0$.
\end{list}
\end{lema}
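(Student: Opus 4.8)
The plan is to derive all nine items directly from the three defining properties of the reticulation in Definition 3.1, using in addition only the radical facts collected in Lemma 2.2 and the compactness characterization of the radical in Lemma 2.4. Items (1)--(6) are purely formal manipulations of the axioms, while the genuine content sits in item (7); once (7) is available, items (8) and (9) drop out by combining it with (5). Throughout I would write $\lambda_A$ for the reticulation map and refer to the three conditions of Definition 3.1 as axioms (1), (2), (3).

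First I would dispose of the order-theoretic and boundary items. For item (1), $a \le b$ gives $a^1 \le b$, so axiom (3) (with $n=1$) yields $\lambda_A(a) \le \lambda_A(b)$. Item (2) combines axiom (1), which gives $\lambda_A(a \lor b) \le \lambda_A(a) \lor \lambda_A(b)$, with the reverse inequality obtained from $a,b \le a \lor b$ and the monotonicity just proved. Surjectivity of $\lambda_A$ supplies $c,d \in K(A)$ with $\lambda_A(c)=1$ and $\lambda_A(d)=0$; since $0 \le d$ and $c \le 1$, monotonicity forces $\lambda_A(0) \le \lambda_A(d)=0$ and $1=\lambda_A(c) \le \lambda_A(1)$, giving item (4) together with $\lambda_A(1)=1$. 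For item (3), from $\lambda_A(a)=1=\lambda_A(1)$ we get $\lambda_A(1) \le \lambda_A(a)$, and axiom (3) then produces $1=1^n \le a$, so $a=1$; the converse is exactly $\lambda_A(1)=1$. Item (5) is axiom (3) with $b=0$, reading off $\lambda_A(0)=0$ and noting that $a^n \le 0$ means $a^n=0$; item (6) follows by iterating axiom (2), $\lambda_A(a^2)=\lambda_A(a)\wedge\lambda_A(a)=\lambda_A(a)$, and inducting.

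The heart of the argument is item (7), which I would prove in the sharper inequality form $\rho(a) \le \rho(b) \iff \lambda_A(a) \le \lambda_A(b)$ and then intersect the two directions to obtain the stated equivalence. If $\lambda_A(a) \le \lambda_A(b)$, axiom (3) gives $a^n \le b$ for some $n$, whence $\rho(a)=\rho(a^n) \le \rho(b)$ by Lemma 2.2(7) and monotonicity of $\rho$. Conversely, if $\rho(a) \le \rho(b)$ then $a \le \rho(a) \le \rho(b)$ with $a \in K(A)$, so Lemma 2.4(2) yields $a^k \le b$ for some $k$, and axiom (3) gives $\lambda_A(a) \le \lambda_A(b)$. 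I expect this to be the main obstacle, since it is the only point where the $\lambda$-calculus must be linked back to the radical through the compactness characterization of Lemma 2.4; every other item is a formal consequence of the three axioms.

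Finally, items (8) and (9) fall out. For (8) I would combine item (5) with Lemma 2.4(2): $\lambda_A(a)=0$ holds iff $a^n=0$ for some $n$, i.e. iff $a^k \le 0$ for some $k$, i.e. iff $a \le \rho(0)$. For (9), semiprimeness means $\rho(0)=0$, so (8) gives $\lambda_A(a)=0 \Rightarrow a \le 0 \Rightarrow a=0$.
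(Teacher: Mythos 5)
Your proof is correct in every item, but there is nothing in the paper to compare it against: Lemma 3.2 is stated with the citation \cite{Cheptea1} and the paper gives no proof of it, importing it wholesale from the Cheptea--Georgescu reference. What your argument establishes is that the lemma is in fact a formal consequence of material the paper does develop internally, namely the three axioms of Definition 3.1 together with Lemma 2.2(1),(7) and Lemma 2.4(2) --- a useful observation, since it makes the paper self-contained at this point. Your organization is sound: items (1)--(6) use only the axioms plus surjectivity (the latter needed just to obtain $\lambda_A(0)=0$, hence $\lambda_A(1)=1$ and item (3)); the radical enters only at item (7), which you rightly prove in the sharper inequality form $\rho(a)\leq\rho(b) \Leftrightarrow \lambda_A(a)\leq\lambda_A(b)$, with Lemma 2.4(2) doing exactly the work of converting $a\leq\rho(b)$, for compact $a$, into $a^k\leq b$ so that axiom (3) of Definition 3.1 applies; items (8) and (9) then fall out of (5) and (7) as you say. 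Two hypotheses deserve explicit mention to make the argument airtight: first, $0\in K(A)$ (trivially, since $0\leq\bigvee F$ for the empty family $F$), so that $\lambda_A(0)$ is defined and your use of monotonicity in item (4) is legitimate; second, in item (6) the element $a^n$ must lie in $K(A)$ for $\lambda_A(a^n)$ to make sense, which holds precisely because $A$ is coherent, i.e.\ $K(A)$ is closed under multiplication. Neither point is a gap --- both are immediate --- but they are the places where the standing hypotheses on $A$ are silently invoked.
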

For any $a\in A$ and $I\in Id(L(A))$ let us denote $a^{\ast}= \{\lambda_A(c)|c\in K(A), c\leq a\}$ and $ I_{\ast} = \bigvee\{c\in K(A)|\lambda_A(c)\in I\}$.

\begin{lema}
\cite{Cheptea1} The following assertions hold
\usecounter{nr}
\begin{list}{(\arabic{nr})}{\usecounter{nr}}
\item If $a\in A $ then $a^{\ast}$ is an ideal of $L(A)$ and $a\leq (a^{\ast})_{\ast}$;
\item If $I\in Id(L(A))$ then $(I_{\ast})^{\ast}=I$;
\item If $p\in Spec(A)$ then  $(p^{\ast})_{\ast}= p$ and $p^{\ast}\in Spec_{Id}(L(A))$;
\item If $P\in Spec_{Id}((L(A))$ then $P_{\ast}\in Spec(A)$;
\item If $p\in K(A)$ then $c^{\ast}= (\lambda_A(c)]$.
\end{list}
\end{lema}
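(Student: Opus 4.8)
The plan is to build a small dictionary translating between compact elements lying below a given element of $A$ and the ideal-membership of their $\lambda_A$-images, and then to push this dictionary through all five assertions. The one fact I would isolate first and reuse throughout is the following pair of equivalences: for $p\in Spec(A)$ and $f\in K(A)$ one has $\lambda_A(f)\in p^{\ast}$ if and only if $f\leq p$; and dually, for $P\in Spec_{Id}(L(A))$ and $f\in K(A)$ one has $f\leq P_{\ast}$ if and only if $\lambda_A(f)\in P$. The nontrivial (forward) direction of the first uses that $\lambda_A(f)=\lambda_A(e)$ with $e\leq p$ forces $f^{n}\leq e\leq p$ for some $n$ by Definition 3.1(3), after which the $m$-primeness of $p$ strips the exponent down to $f\leq p$; the forward direction of the second uses compactness of $f$ to reduce $f\leq\bigvee\{c\mid\lambda_A(c)\in P\}$ to a finite subjoin and then applies Lemma 3.2(2) together with $P$ being an ideal.

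For assertion (1), I would verify $a^{\ast}$ is an ideal directly: it contains $\lambda_A(0)=0$; it is closed under finite joins since finite joins of compact elements are compact and $\lambda_A(c\vee d)=\lambda_A(c)\vee\lambda_A(d)$; and it is downward closed because $x\leq\lambda_A(c)$ with $x=\lambda_A(d)$ yields $d^{n}\leq c\leq a$, so $x=\lambda_A(d^{n})$ with $d^{n}\in K(A)$ and $d^{n}\leq a$. The inequality $a\leq(a^{\ast})_{\ast}$ is then immediate: as $A$ is algebraic, $a=\bigvee\{c\in K(A)\mid c\leq a\}$ and every such $c$ satisfies $\lambda_A(c)\in a^{\ast}$, hence lies under $(a^{\ast})_{\ast}$. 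Assertion (5) is the special case $a=c$: the inclusion $c^{\ast}\subseteq(\lambda_A(c)]$ is monotonicity of $\lambda_A$, and the reverse inclusion is again the ``$d^{n}\leq c$'' trick combined with $\lambda_A(d^{n})=\lambda_A(d)$. For assertion (2), $(I_{\ast})^{\ast}\subseteq I$ follows by writing any $\lambda_A(d)\in(I_{\ast})^{\ast}$ with $d$ compact below a finite subjoin of generators of $I_{\ast}$ and using that $I$ is an ideal, while $I\subseteq(I_{\ast})^{\ast}$ follows from surjectivity of $\lambda_A$ and the definition of $I_{\ast}$.

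Assertions (3) and (4) are where the dictionary does the real work. For (3), the equality $(p^{\ast})_{\ast}=p$ combines $p\leq(p^{\ast})_{\ast}$ from (1) with the reverse inequality, which holds because every compact generator $c$ of $(p^{\ast})_{\ast}$ satisfies $\lambda_A(c)\in p^{\ast}$, hence $c\leq p$ by the dictionary; that $p^{\ast}$ is prime reduces, via $\lambda_A(c)\wedge\lambda_A(d)=\lambda_A(cd)$ and the dictionary, to the implication $cd\leq p\Rightarrow c\leq p\text{ or }d\leq p$, i.e. exactly the compact-element form of $m$-primeness, while properness of $p^{\ast}$ follows from $\lambda_A(c)=1\Leftrightarrow c=1$ (Lemma 3.2(3)). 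For (4), I would obtain $P_{\ast}<1$ cleanly from (2) (if $P_{\ast}=1$ then $P=(P_{\ast})^{\ast}=1^{\ast}=L(A)$, contradicting primeness of $P$), and prove $P_{\ast}$ is $m$-prime by the same reduction run backwards through the second half of the dictionary, using that $A$ is algebraic so that the compact-element characterization of $m$-primeness applies.

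The main obstacle I anticipate is not any single assertion but the repeated need to pass between the genuine order on $L(A)$ and the ``order up to radicals'' on $K(A)$: the equivalence $\lambda_A(c)\leq\lambda_A(d)\Leftrightarrow c^{n}\leq d$ means every membership statement about $\lambda_A$-images carries a hidden exponent, and the crux in (3) is invoking $m$-primeness of $p$ to remove that exponent (so that $c^{n}\leq p$ upgrades to $c\leq p$), while the crux in the $(-)_{\ast}$ direction is invoking compactness to collapse arbitrary joins of generators to finite ones before applying Lemma 3.2(2). Once the dictionary is stated precisely and these two moves are secured, all five parts follow by routine bookkeeping.
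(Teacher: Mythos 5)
Your proposal is correct and complete. Note that the paper itself gives no proof of this lemma: it is quoted from the reference [Cheptea--Georgescu, \emph{Boolean lifting properties in quantales}], so there is no in-paper argument to compare against. What is worth pointing out is that the two ``dictionary'' equivalences you isolate at the outset are precisely the paper's own Lemma 3.5 (for an arbitrary ideal $I$, $c\leq I_{\ast}$ iff $\lambda_A(c)\in I$) and Lemma 3.6 (for $p\in Spec(A)$, $c\leq p$ iff $\lambda_A(c)\in p^{\ast}$), which the paper states only \emph{after} Lemma 3.3; since you derive both directly from Definition 3.1(3), Lemma 3.2 and compactness (the finite-subjoin reduction for $(-)_{\ast}$, the exponent-stripping via $m$-primeness for $(-)^{\ast}$), there is no circularity, and your argument in effect reconstructs the cited proof in the order the paper's later lemmas suggest. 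Two small points you handle correctly but should keep explicit in a written version: downward closure of $a^{\ast}$ and the primeness reductions in (3) and (4) both lean on surjectivity of $\lambda_A$ together with closure of $K(A)$ under products and finite joins (coherence), and the properness of $P_{\ast}$ in (4) does require part (2), exactly as you use it.
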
\
\begin{lema}
\cite{Cheptea1} If $a\in A$ and $I\in Id(L(A))$ then $\rho(a)=(a^{\ast})_{\ast}$, $a^{\ast}= (\rho(a))^{\ast}$ and $\rho(I_{\ast})= I_{\ast}$.
\end{lema}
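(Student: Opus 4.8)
The plan is to prove the first identity $\rho(a)=(a^{\ast})_{\ast}$ by a direct computation on the compact elements, and then to read off the remaining two identities as purely formal consequences of it, using the round-trip rule $(I_{\ast})^{\ast}=I$ (valid for every $I\in Id(L(A))$) and the fact that $a^{\ast}$ is always an ideal of $L(A)$.

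For the first identity I would unfold the definition $(a^{\ast})_{\ast}=\bigvee\{c\in K(A)\mid \lambda_A(c)\in a^{\ast}\}$ and argue that the indexing set coincides with $\{c\in K(A)\mid c\leq\rho(a)\}$. If $c\in K(A)$ satisfies $\lambda_A(c)\in a^{\ast}$, then by the very definition of $a^{\ast}$ there is $d\in K(A)$ with $d\leq a$ and $\lambda_A(c)=\lambda_A(d)$; the equivalence $\rho(c)=\rho(d)\Leftrightarrow\lambda_A(c)=\lambda_A(d)$ together with $\rho(d)\leq\rho(a)$ gives $c\leq\rho(c)=\rho(d)\leq\rho(a)$. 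Conversely, if $c\in K(A)$ and $c\leq\rho(a)$, then by \cite{Martinez} one has $c^{k}\leq a$ for some integer $k\geq1$; since $K(A)$ is closed under multiplication, $c^{k}\in K(A)$, and from $\lambda_A(c^{k})=\lambda_A(c)$ we conclude $\lambda_A(c)\in a^{\ast}$. Once the two indexing sets are seen to be equal, algebraicity of $A$ finishes the argument: every element of an algebraic quantale is the join of the compact elements below it, so $\rho(a)=\bigvee\{c\in K(A)\mid c\leq\rho(a)\}=(a^{\ast})_{\ast}$.

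The second identity is then immediate. Applying $(-)^{\ast}$ to the equality $\rho(a)=(a^{\ast})_{\ast}$ and invoking the round-trip rule with $I=a^{\ast}$ (which is legitimate because $a^{\ast}$ is an ideal), we get $(\rho(a))^{\ast}=((a^{\ast})_{\ast})^{\ast}=a^{\ast}$. For the third identity I would instead apply the already-proven first identity to the element $I_{\ast}$ in place of $a$, obtaining $\rho(I_{\ast})=((I_{\ast})^{\ast})_{\ast}$; since $(I_{\ast})^{\ast}=I$, the right-hand side collapses to $I_{\ast}$, whence $\rho(I_{\ast})=I_{\ast}$.

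I expect the only genuine work to lie in the first identity, and within it in the translation $\lambda_A(c)\in a^{\ast}\Leftrightarrow c\leq\rho(a)$: this is the point where one must move between membership in the lattice ideal $a^{\ast}$ and a radical inequality in the quantale $A$, and it relies on the characterization of equal $\lambda$-values through equal radicals and on the power criterion $c\leq\rho(a)\Leftrightarrow c^{k}\leq a$. By contrast, the second and third identities are formal bookkeeping with $(-)^{\ast}$, $(-)_{\ast}$ and the identity $(I_{\ast})^{\ast}=I$, so no further obstacle is anticipated there.
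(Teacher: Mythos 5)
Your proposal is correct, but there is no in-paper proof to compare it against: the paper imports this lemma verbatim from \cite{Cheptea1} without argument, so your write-up must be judged as a standalone derivation, and as such it is complete. The real content is exactly where you locate it, in the translation $\lambda_A(c)\in a^{\ast}\Leftrightarrow c\leq\rho(a)$, and both directions are properly licensed by results the paper does state: in the forward direction, the definition of $a^{\ast}$ yields $d\in K(A)$ with $d\leq a$ and $\lambda_A(c)=\lambda_A(d)$, whence $c\leq\rho(c)=\rho(d)\leq\rho(a)$ by Lemma 2.2,(1), Lemma 3.2,(7), and monotonicity of $\rho$ (immediate from its definition as a meet of $m$-prime elements); in the converse direction, Lemma 2.4,(2) applies because $A$ is assumed coherent throughout Section 3, coherence also gives $c^{k}\in K(A)$, and Lemma 3.2,(6) gives $\lambda_A(c)=\lambda_A(c^{k})\in a^{\ast}$. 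The concluding join computation $\rho(a)=\bigvee\{c\in K(A)\mid c\leq\rho(a)\}$ is precisely algebraicity, and the reduction of the second and third identities to the first via Lemma 3.3,(1) and (2) is sound formal bookkeeping. A minor remark: in the forward direction you could bypass Lemma 3.2,(7) entirely by invoking axiom (3) of Definition 3.1, since $\lambda_A(c)\leq\lambda_A(d)$ already gives $c^{n}\leq d\leq a$ for some $n$, hence $c\leq\rho(a)$ by Lemma 2.4,(2); this makes the two directions symmetric, but your route is equally valid.
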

\begin{lema}
If $c\in K(A)$ and $I\in Id(L(A))$ then $c\leq I_{\ast}$ iff $\lambda_A(c)\in I$.
\end{lema}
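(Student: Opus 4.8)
The plan is to prove the two implications separately, observing that the implication $\lambda_A(c)\in I \Rightarrow c\leq I_{\ast}$ is essentially immediate while the converse carries all the content. For the easy direction, suppose $\lambda_A(c)\in I$. Then $c$ itself belongs to the family $\{d\in K(A)\mid \lambda_A(d)\in I\}$ whose supremum defines $I_{\ast}=\bigvee\{d\in K(A)\mid \lambda_A(d)\in I\}$, and hence $c\leq I_{\ast}$ follows directly from this definition.

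For the converse, $c\leq I_{\ast} \Rightarrow \lambda_A(c)\in I$, I would exploit that $A$ is algebraic and that $c$ is compact. Writing $c\leq I_{\ast}=\bigvee\{d\in K(A)\mid \lambda_A(d)\in I\}$ and invoking the compactness of $c$, I extract a finite subfamily $d_1,\dots,d_n$ with $\lambda_A(d_i)\in I$ for each $i$ and $c\leq d$, where $d=d_1\vee\cdots\vee d_n$. Since finite joins of compact elements are again compact, $d\in K(A)$, so $\lambda_A$ is defined on $d$; then by monotonicity (Lemma 3.2(1)) together with the binary join-preservation (Lemma 3.2(2)) extended inductively, I obtain $\lambda_A(c)\leq \lambda_A(d)=\lambda_A(d_1)\vee\cdots\vee\lambda_A(d_n)$. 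The right-hand side is a finite join of members of $I$, hence lies in $I$ because $I$ is an ideal, and since $I$ is downward closed and $\lambda_A(c)$ lies beneath this element, I conclude $\lambda_A(c)\in I$.

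The single genuinely essential ingredient, and the step I would flag as the crux, is the reduction of the possibly infinite join defining $I_{\ast}$ to a finite subjoin: this is exactly what the compactness of $c$ in the algebraic quantale $A$ provides, and it is also what legitimizes applying $\lambda_A$ (defined only on $K(A)$) to the subjoin. Everything after that reduction is formal, relying only on the homomorphism-like behaviour of $\lambda_A$ from Lemma 3.2 and the defining closure properties of an ideal. I would remark in passing that the statement can be read as one half of a Galois adjunction $a^{\ast}\subseteq I \iff a\leq I_{\ast}$ between $(-)^{\ast}$ and $(-)_{\ast}$; combining it with Lemma 3.3(5), which gives $c^{\ast}=(\lambda_A(c)]$ for compact $c$, turns $c\leq I_{\ast}$ into $c^{\ast}\subseteq I$ and then into $\lambda_A(c)\in I$, so the lemma fits naturally among the preceding results.
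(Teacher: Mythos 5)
Your proof is correct and follows essentially the same route as the paper: the paper's proof also dismisses the direction $\lambda_A(c)\in I \Rightarrow c\leq I_{\ast}$ as obvious, and for the converse uses compactness of $c$ to pass from the join defining $I_{\ast}$ to a dominating compact element $d$ with $\lambda_A(d)\in I$, then concludes via monotonicity of $\lambda_A$ and downward closure of $I$. Your version merely makes explicit the step the paper leaves implicit, namely that the finite subjoin $d_1\vee\cdots\vee d_n$ is again compact and satisfies $\lambda_A(d_1\vee\cdots\vee d_n)=\lambda_A(d_1)\vee\cdots\vee\lambda_A(d_n)\in I$.
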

\begin{proof}
If $c\leq \bigvee\{d\in K(A))|\lambda_A(d)\in I\}$ then there exists $d\in K(A)$ such that $\lambda_A(c)\in I$ and $c\leq d$. Thus $\lambda_A(c)\leq\lambda_A(d)$, so $\lambda_A(c)\in I$. The converse implication is obvious.
\end{proof}
\begin{lema}
Assume that $c\in K(A)$ and $p\in Spec(A)$. Then $c\leq p$ iff $\lambda_A(c) \in p^{\ast}$.
\end{lema}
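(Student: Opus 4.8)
The plan is to recognize that this statement is essentially an immediate specialization of the preceding lemma, and to exploit the fact that $p^{\ast}$ is itself a prime ideal of $L(A)$. Recall that the lemma just above asserts $c\leq I_{\ast}$ iff $\lambda_A(c)\in I$ for every $c\in K(A)$ and every $I\in Id(L(A))$, while Lemma~3.3(3) records that for $p\in Spec(A)$ we have $p^{\ast}\in Spec_{Id}(L(A))$ and $(p^{\ast})_{\ast}=p$. So the cleanest route is: since a prime ideal is in particular an ideal, apply the previous lemma with the choice $I=p^{\ast}$. This yields $c\leq (p^{\ast})_{\ast}$ iff $\lambda_A(c)\in p^{\ast}$, and substituting the identity $(p^{\ast})_{\ast}=p$ gives exactly $c\leq p$ iff $\lambda_A(c)\in p^{\ast}$. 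Both directions of the equivalence come out at once, so no separate forward/backward bookkeeping is needed.

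For completeness I would also sketch a self-contained argument that does not invoke the two reductions, since it makes transparent where the hypotheses $c\in K(A)$ and $p\in Spec(A)$ are actually used. The forward implication is trivial: if $c\in K(A)$ and $c\leq p$, then $\lambda_A(c)$ is by the very definition $a^{\ast}=\{\lambda_A(d)\mid d\in K(A),\,d\leq a\}$ a member of $p^{\ast}$.

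The converse is where the genuine content lies. Assume $\lambda_A(c)\in p^{\ast}$, so there exists $d\in K(A)$ with $d\leq p$ and $\lambda_A(d)=\lambda_A(c)$. In particular $\lambda_A(c)\leq\lambda_A(d)$, so by axiom (3) of the reticulation there is an integer $n\geq 1$ with $c^{n}\leq d\leq p$. Now I use that $p$ is $m$-prime: from $c^{n}=c\cdot c^{n-1}\leq p$ primeness gives $c\leq p$ or $c^{n-1}\leq p$, and a short induction on $n$ reduces $c^{n}\leq p$ to $c\leq p$, which is the desired conclusion. Equivalently, one may phrase this via radicals: $\lambda_A(c)=\lambda_A(d)$ gives $\rho(c)=\rho(d)$, and since $d\leq p$ with $p$ radical (being $m$-prime, $\rho(p)=p$), we get $c\leq\rho(c)=\rho(d)\leq\rho(p)=p$.

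The only real obstacle to watch is that $\lambda_A$ is not injective, so the hypothesis $\lambda_A(c)\in p^{\ast}$ does not by itself say $c\leq p$; it only produces some $d\leq p$ with the same $\lambda_A$-value. Bridging that gap is precisely what axiom (3) together with $m$-primeness (or, in the radical formulation, the fact that $m$-prime elements are radical) accomplishes, and it is also exactly what is packaged into the equalities $p^{\ast}\in Spec_{Id}(L(A))$ and $(p^{\ast})_{\ast}=p$ used by the one-line reduction. I would present the reduction as the main proof and mention the direct computation as the underlying reason it works.
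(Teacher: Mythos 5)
Your main argument is exactly what the paper intends: the lemma is stated there without proof, being the immediate specialization of the preceding Lemma 3.5 to the ideal $I=p^{\ast}$ combined with the identity $(p^{\ast})_{\ast}=p$ from Lemma 3.3(3). Your proposal is therefore correct and takes essentially the paper's (implicit) route; the supplementary self-contained argument via reticulation axiom (3) and $m$-primeness (or radicals) is also sound, though not needed.
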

According to Lemma 3.3, one can consider the following order- preserving functions: $ u:Spec(A)\rightarrow Spec_{Id}(L(A))$ and $ v:Spec_{Id}(L(A))\rightarrow Spec(A)$, defined by $u(p) = p^{\ast}$ and $v(P) = P_{\ast}$, for all $p\in Spec(A)$ and $P\in Spec_{Id}(L(A))$. Sometimes the previous functions $u$ and $v$ will be denoted by $u_A$ and $v_A$.
\begin{lema}
\cite{Cheptea1}
The functions $u$ and $v$ are homeomorphisms, inverse to one another.
\end{lema}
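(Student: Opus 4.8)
The plan is to first verify that $u$ and $v$ are mutually inverse bijections, and then to check that each carries the basic open sets of one spectrum onto the basic open sets of the other, which upgrades the bijection to a homeomorphism. Since $u$ and $v$ are already known to be well-defined and order-preserving, the work splits cleanly into a set-theoretic part and a topological part.

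For the bijection, everything needed is contained in Lemma 3.3. Part (3) gives $(p^{\ast})_{\ast}=p$ for every $p\in Spec(A)$, which says exactly that $v\circ u=\mathrm{id}_{Spec(A)}$; and since $Spec_{Id}(L(A))\subseteq Id(L(A))$, part (2) gives $(I_{\ast})^{\ast}=I$ for every $I\in Id(L(A))$, in particular $(P_{\ast})^{\ast}=P$ for $P\in Spec_{Id}(L(A))$, which says that $u\circ v=\mathrm{id}_{Spec_{Id}(L(A))}$. Parts (3) and (4) also guarantee that $u$ and $v$ land in the correct codomains. Hence $u$ and $v$ are bijections, inverse to one another.

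For the topological part, recall that $(D(c))_{c\in K(A)}$ is a basis of open sets for $Spec_Z(A)$, while $(D_{Id}(x))_{x\in L(A)}$ is a basis for $Spec_{Id,Z}(L(A))$. Because $\lambda_A:K(A)\rightarrow L(A)$ is surjective, every basic open set of the latter has the form $D_{Id}(\lambda_A(c))$ for some $c\in K(A)$, so it suffices to relate $D(c)$ and $D_{Id}(\lambda_A(c))$. This is precisely what Lemma 3.6 provides: for $c\in K(A)$ and $p\in Spec(A)$ one has $c\leq p$ iff $\lambda_A(c)\in p^{\ast}$, hence $c\not\leq p$ iff $\lambda_A(c)\notin u(p)$. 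Equivalently, $p\in D(c)$ iff $u(p)\in D_{Id}(\lambda_A(c))$, so, using that $u$ is a bijection, $u(D(c))=D_{Id}(\lambda_A(c))$ and $u^{-1}(D_{Id}(\lambda_A(c)))=D(c)$.

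Putting these together, $u$ sends each basic open to a basic open and pulls each basic open back to a basic open; by surjectivity of $\lambda_A$ these exhaust the respective bases, so $u$ is simultaneously open and continuous, and therefore a homeomorphism with inverse $v$. I do not expect any genuine obstacle here, since the real content has been absorbed into the earlier lemmas; the only point requiring a little care is to invoke the surjectivity of $\lambda_A$, so that the equalities $u(D(c))=D_{Id}(\lambda_A(c))$ really do account for every basic open set of $Spec_{Id,Z}(L(A))$.
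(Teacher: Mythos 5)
Your proof is correct. Note, however, that the paper offers no proof of this lemma at all: it is quoted from \cite{Cheptea1}, so there is no internal argument to compare against. Your reconstruction is the natural one given the apparatus the paper has already set up: bijectivity and well-definedness come exactly from Lemma 3.3, parts (2)--(4), and the topological upgrade comes from Lemma 3.6 together with the surjectivity of $\lambda_A$, which you correctly flag as the point needed so that the sets $D_{Id}(\lambda_A(c))$ exhaust the basis of $Spec_{Id,Z}(L(A))$. It is also consonant with how the paper itself handles the analogous statement later: in the proof of Proposition 5.5 (the patch/flat version of this lemma) the author runs precisely your computation, using Lemma 3.6 to get $u^{-1}(V_{Id}(\lambda_A(c)))=V(c)$ and $u^{-1}(D_{Id}(\lambda_A(c)))=D(c)$ and then appealing to the bases of the two topologies. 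So your argument can be viewed as the intended (omitted) proof; the only stylistic difference is that you make explicit the final step that a continuous open bijection is a homeomorphism, which the paper leaves implicit.
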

\begin{corolar}
$Max_Z(A)$ and $Max_{Id,Z}(L(A))$ are homeomorphic.
\end{corolar}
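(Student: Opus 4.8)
The plan is to deduce the corollary from the preceding lemma (that $u$ and $v$ are mutually inverse homeomorphisms) by showing that $u$ restricts to a homeomorphism between the two maximal spectra. Since the restriction of a homeomorphism to a subspace, regarded as a map onto its image with the subspace topologies, is again a homeomorphism, it suffices to prove the purely combinatorial fact that $u$ maps the set $Max(A)$ bijectively onto $Max_{Id}(L(A))$. The inverse of this restricted bijection will then automatically be the corresponding restriction of $v$.

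To establish that set-theoretic bijection I would first recall that $u$ and $v$ are order-preserving and mutually inverse, so $u$ is in fact an order isomorphism between the posets $(Spec(A),\leq)$ and $(Spec_{Id}(L(A)),\subseteq)$, and an order isomorphism carries maximal elements to maximal elements. Hence the task reduces to identifying the maximal elements of each poset with the respective maximal spectrum. In the quantale $A$ this uses the coherence hypothesis: since $1\in K(A)$, every $p<1$ lies below some $m\in Max(A)$, and together with the inclusion $Max(A)\subseteq Spec(A)$ already recorded in the Preliminaries this shows that the maximal elements of the poset $Spec(A)$ are exactly the members of $Max(A)$. Symmetrically, in the bounded distributive lattice $L(A)$ every proper ideal is contained in a maximal ideal and every maximal ideal is prime, so the maximal elements of $Spec_{Id}(L(A))$ are precisely the members of $Max_{Id}(L(A))$.

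Combining these identifications with the fact that $u$ preserves maximality in both directions gives $u(Max(A))=Max_{Id}(L(A))$, so $u|_{Max(A)}$ is a bijection onto $Max_{Id}(L(A))$ with inverse $v|_{Max_{Id}(L(A))}$. Because $Max_Z(A)$ and $Max_{Id,Z}(L(A))$ are, by definition, the subspaces induced on these sets by $Spec_Z(A)$ and $Spec_{Id,Z}(L(A))$, and $u$ is a homeomorphism of the ambient spaces, the restriction $u|_{Max(A)}\colon Max_Z(A)\rightarrow Max_{Id,Z}(L(A))$ is a homeomorphism, which is the assertion of the corollary. The only step that is not a formal consequence of having an order/topological isomorphism is the identification of the maximal elements of each prime-spectrum poset with the corresponding maximal spectrum; the mild obstacle there is simply to invoke $1\in K(A)$ to guarantee that each $m$-prime element sits below a maximal one, exactly as quoted earlier.
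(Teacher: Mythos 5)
Your proof is correct and is exactly the argument the paper intends: the corollary is stated there without proof as an immediate consequence of Lemma 3.7, the point being that the mutually inverse order-preserving homeomorphisms $u$ and $v$ form an order isomorphism, hence carry poset-maximal elements to poset-maximal elements, and therefore restrict to a homeomorphism between the subspaces $Max_Z(A)$ and $Max_{Id,Z}(L(A))$. Your identification of the poset-maximal elements of $Spec(A)$ with $Max(A)$ (via $1\in K(A)$ and $Max(A)\subseteq Spec(A)$) and of the maximal elements of $Spec_{Id}(L(A))$ with $Max_{Id}(L(A))$ is precisely the detail the paper leaves implicit.
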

\begin{propozitie}
\cite{Cheptea1}
The functions $\Phi:R(A) \rightarrow Id(L(A))$ and $\Psi:Id(L(A))\rightarrow R(A)$ defined by $\Phi(a)= a^{\ast}$ and $\Psi(I)= I_{\ast}$, for all $a\in R(A)$ and $I \in Id(L(A))$, are frame isomorphisms, inverse to one another.
\end{propozitie}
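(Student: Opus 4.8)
The plan is to show that $\Phi$ and $\Psi$ are mutually inverse order isomorphisms, and then to invoke the fact that, between complete lattices, every order isomorphism automatically preserves arbitrary joins and arbitrary meets. Since $R(A)$ and $Id(L(A))$ are frames (in particular complete lattices), such an isomorphism respects the frame operations and is therefore a frame isomorphism. This reduces the proposition to a handful of checks resting entirely on Lemmas 3.3 and 3.4.

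First I would confirm well-definedness. For $a \in R(A)$, Lemma 3.3(1) gives that $\Phi(a) = a^{\ast}$ is an ideal of $L(A)$, so $\Phi$ lands in $Id(L(A))$; for $I \in Id(L(A))$, the identity $\rho(I_{\ast}) = I_{\ast}$ of Lemma 3.4 shows that $\Psi(I) = I_{\ast}$ is a radical element, so $\Psi$ lands in $R(A)$. Next I would verify that the two maps are inverse to one another. For a radical $a$ we obtain $\Psi(\Phi(a)) = (a^{\ast})_{\ast} = \rho(a) = a$, using the identity $\rho(a) = (a^{\ast})_{\ast}$ of Lemma 3.4 together with $a = \rho(a)$; and for $I \in Id(L(A))$ we obtain $\Phi(\Psi(I)) = (I_{\ast})^{\ast} = I$ by Lemma 3.3(2). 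Hence $\Phi$ and $\Psi$ are mutually inverse bijections.

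Then I would check order preservation directly from the definitions: if $a \le b$ in $R(A)$, then any compact $c \le a$ also satisfies $c \le b$, whence $a^{\ast} \subseteq b^{\ast}$; and if $I \subseteq J$ in $Id(L(A))$, then the join defining $I_{\ast}$ ranges over a subset of that defining $J_{\ast}$, whence $I_{\ast} \le J_{\ast}$. A mutually inverse pair of order-preserving bijections is an order isomorphism, and between the frames $R(A)$ and $Id(L(A))$ such a map preserves arbitrary joins and finite meets, which completes the argument.

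The one genuinely delicate point, which I would flag as the main obstacle, is the passage from \emph{order isomorphism} to \emph{frame isomorphism}. If one wanted a direct verification instead of the order-theoretic shortcut, the real work would be to show that $\Phi$ carries the radical-adjusted join $\bigvee^{\cdot}_{i} a_i = \rho(\bigvee_i a_i)$ of $R(A)$ to the ideal join $\bigvee_i a_i^{\ast}$; here the identity $a^{\ast} = (\rho(a))^{\ast}$ of Lemma 3.4 is essential, since it lets one replace $(\rho(\bigvee_i a_i))^{\ast}$ by $(\bigvee_i a_i)^{\ast}$ before computing $(\bigvee_i a_i)^{\ast} = \bigvee_i a_i^{\ast}$ from the definition of $(\cdot)^{\ast}$. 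Preservation of finite meets, $(a \wedge b)^{\ast} = a^{\ast} \cap b^{\ast}$, is comparatively routine. I would nevertheless prefer the order-isomorphism route, which sidesteps this computation altogether.
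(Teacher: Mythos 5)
Your proof is correct. Note, however, that the paper itself offers no proof to compare against: this proposition is stated with the citation \cite{Cheptea1} and imported as a known result, so your argument is a reconstruction rather than a rival to an in-paper proof. As a reconstruction it is sound and efficient: well-definedness follows from Lemma 3.3(1) and the identity $\rho(I_{\ast})=I_{\ast}$ of Lemma 3.4; the mutual-inverse computations $(a^{\ast})_{\ast}=\rho(a)=a$ and $(I_{\ast})^{\ast}=I$ are exactly the right uses of Lemmas 3.4 and 3.3(2); and the passage from a mutually inverse pair of order-preserving bijections to preservation of arbitrary joins and meets is a legitimate general fact about order isomorphisms between complete lattices, so the frame operations come for free. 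Your flagged ``delicate point'' is also handled correctly: the direct verification $\Phi\bigl(\rho(\bigvee_i a_i)\bigr)=(\bigvee_i a_i)^{\ast}=\bigvee_i a_i^{\ast}$ does go through, using $a^{\ast}=(\rho(a))^{\ast}$ together with compactness of the generators and Lemma 3.2(2), though as you say the order-theoretic shortcut makes this unnecessary. One small consistency check in your favor: the paper's Corollary 3.10 immediately deduces $(I\lor J)_{\ast}=\rho(I_{\ast}\lor J_{\ast})$ from the fact that $\Psi$ ``preserves the finite joins,'' which is precisely the consequence your argument delivers.
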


\begin{corolar} If $I, J$ are ideals of $L(A)$ then $(I\lor J)_{\ast} = \rho(I_{\ast}\lor J_{\ast})$.

\end{corolar}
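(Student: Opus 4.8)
The plan is to read this identity off directly from the frame isomorphism of Proposition 3.9, so that no genuine computation is needed. First I would record two facts. By Lemma 3.4 we have $\rho(I_{\ast})=I_{\ast}$ and $\rho(J_{\ast})=J_{\ast}$, so both $I_{\ast}$ and $J_{\ast}$ belong to $R(A)$. Moreover, by the definition of $\bigvee^{\cdot}$, the join of two radical elements $a,b$ in the frame $R(A)$ is the element $\rho(a\vee b)$; in particular the join of $I_{\ast}$ and $J_{\ast}$ in $R(A)$ is $\rho(I_{\ast}\vee J_{\ast})$.

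Next I would use that $\Psi:Id(L(A))\rightarrow R(A)$, $\Psi(I)=I_{\ast}$, is a frame isomorphism (Proposition 3.9) and hence preserves finite joins. Since the join of $I$ and $J$ in the frame $Id(L(A))$ is the ideal-theoretic join $I\vee J$, this yields
\[
(I\vee J)_{\ast}=\Psi(I\vee J)=\Psi(I)\vee^{\cdot}\Psi(J)=\rho(I_{\ast}\vee J_{\ast}),
\]
which is exactly the asserted equality.

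There is essentially no obstacle in this argument; the only point needing attention is to keep the two join operations apart — the inclusion-join of ideals in $Id(L(A))$ on one side, and the operation $\rho(\,\cdot\vee\cdot\,)$ in $R(A)$ on the other — and to invoke join-preservation of $\Psi$ correctly. Should a self-contained proof be preferred, I would instead check the two inequalities separately: the inclusion $\rho(I_{\ast}\vee J_{\ast})\leq (I\vee J)_{\ast}$ follows from monotonicity of $(\,\cdot\,)_{\ast}$ together with the fact that $(I\vee J)_{\ast}$ is radical (Lemma 3.4), while for the reverse inclusion I would take $c\in K(A)$ with $\lambda_A(c)\in I\vee J$, write $\lambda_A(c)\leq \lambda_A(d)\vee\lambda_A(e)=\lambda_A(d\vee e)$ with $\lambda_A(d)\in I$ and $\lambda_A(e)\in J$ (using surjectivity of $\lambda_A$), and then conclude $c\leq\rho(d\vee e)\leq\rho(I_{\ast}\vee J_{\ast})$ via the reticulation axiom $(3)$ and Lemma 3.5.
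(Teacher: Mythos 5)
Your proof is correct and takes essentially the same approach as the paper: the paper's proof consists precisely of the observation that the frame isomorphism $\Psi$ of Proposition 3.9 preserves finite joins, with the join in $R(A)$ being $\bigvee^{\cdot}$, i.e.\ $\rho(\,\cdot\vee\cdot\,)$. Your write-up simply makes explicit the identification of the join in $R(A)$ (and the fact that $I_{\ast},J_{\ast}\in R(A)$ via Lemma 3.4) that the paper leaves implicit.
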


\begin{proof} The equality $(I\lor J)_{\ast} = \rho(I_{\ast}\lor J_{\ast})$ follows from the fact that the frame isomorphism $\Psi$ preserves the finite joins.

\end{proof}

\section{Boolean center of a quantale and the reticulation}

 \hspace{0.5cm}The Boolean center of a quantale $A$ is the Boolean algebra $B(A)$ of complemented elements of $A$ (cf. \cite{Birkhoff},\cite{Jipsen}). In this section we shall
study some basic properties of the Boolean center of a coherent quantale versus the reticulation.
\begin{lema}
\cite{Birkhoff},\cite{Jipsen} Let $A$ be a quantale and $a,b \in A$, $e\in B(A)$. Then the following properties hold:
\usecounter{nr}
\begin{list}{(\arabic{nr})}{\usecounter{nr}}
\item $ a\in B(A)$ iff $a \lor a^{\bot} = 1$;
\item $ a\land b$ = $ae$;
\item $e\rightarrow a$ = $e^{\bot}\lor a$;
\item If $a\lor b = 1$ and  $ab = 0$, then $a,b \in B(A)$;
\item $(a \land b) \lor e = (a\lor e)\land (b\land e)$;
\item  For any integer $n\geq1$, $a\lor b = 1$ and $a^n b^n = 0$ implies $a^n,b^n \in B(A)$.
\end{list}
\end{lema}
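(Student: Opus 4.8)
The plan is to establish the six items by leaning on two recurring tools: the identity $a\cdot a^{\bot}=0$, which follows from $a^{\bot}=\bigvee\{x\mid ax=0\}$ together with the distributivity of multiplication over arbitrary joins (a quantale axiom), and Lemma 2.1(1), which upgrades any relation $x\lor y=1$ to the equality $xy=x\land y$. I would also use freely that the quantale is integral, so $xy\leq x\land y$ for all $x,y$, and in particular $e\cdot e^{\bot}\leq e\land e^{\bot}$.

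For item (1), in the direction ($\Leftarrow$) I assume $a\lor a^{\bot}=1$; Lemma 2.1(1) then gives $a\land a^{\bot}=a\cdot a^{\bot}=0$, so the pair $(a,a^{\bot})$ witnesses that $a$ is complemented, i.e.\ $a\in B(A)$. Conversely, if $a\in B(A)$ has complement $b$, then $ab\leq a\land b=0$ forces $b\leq a^{\bot}$ by the definition of $a^{\bot}$, whence $1=a\lor b\leq a\lor a^{\bot}$. Item (4) is then immediate: from $a\lor b=1$, Lemma 2.1(1) yields $a\land b=ab=0$, so $b$ (resp.\ $a$) is a complement of $a$ (resp.\ $b$) and both lie in $B(A)$. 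Item (6) reduces to (4): by Lemma 2.1(2), $a\lor b=1$ implies $a^{n}\lor b^{n}=1$, and since $a^{n}b^{n}=0$ by hypothesis, (4) applied to the pair $(a^{n},b^{n})$ gives $a^{n},b^{n}\in B(A)$.

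The remaining items (2), (3), (5) all hinge on writing $1=e\lor e^{\bot}$ (valid by item (1), as $e\in B(A)$) and multiplying through, using $e\cdot e^{\bot}=0$. For (2), which I read as $a\land e=ae$, integrality gives $ae\leq a\land e$, and conversely $a\land e=(a\land e)(e\lor e^{\bot})=(a\land e)e\lor(a\land e)e^{\bot}\leq ae\lor 0=ae$. For (3) I verify the two inequalities governing the residual $e\rightarrow a=\bigvee\{x\mid ex\leq a\}$: the inclusion $e^{\bot}\lor a\leq e\rightarrow a$ follows from $e(e^{\bot}\lor a)=ee^{\bot}\lor ea=ea\leq a$; and if $ex\leq a$ then $x=x(e\lor e^{\bot})=xe\lor xe^{\bot}\leq a\lor e^{\bot}$, giving $e\rightarrow a\leq e^{\bot}\lor a$. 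For (5), read as $(a\land b)\lor e=(a\lor e)\land(b\lor e)$, the inequality $\leq$ is automatic; for $\geq$ I set $c=(a\lor e)\land(b\lor e)$ and decompose $c=c(e\lor e^{\bot})=ce\lor ce^{\bot}$, noting $ce\leq e$, while $ce^{\bot}\leq(a\lor e)e^{\bot}=ae^{\bot}\leq a$ and likewise $ce^{\bot}\leq b$, so $ce^{\bot}\leq a\land b$ and hence $c\leq(a\land b)\lor e$.

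I do not anticipate a genuinely hard step: once the two tools above are in place, each computation is short and mechanical. The one point requiring care is that the underlying lattice of a quantale need not be distributive, so in (2) and (5) every argument must be routed through the multiplication and the identities $e\cdot e^{\bot}=0$ and $ae^{\bot}\leq a$, rather than invoking lattice distributivity directly; it is precisely the quantale distributivity of $\cdot$ over $\lor$ that legitimizes the decompositions $x=xe\lor xe^{\bot}$. A minor editorial caveat is that items (2) and (5) as printed appear to contain typographical slips (the right-hand side of (2) pairs $a\land b$ with $ae$, and the second factor of (5) reads $b\land e$); I interpret them as $a\land e=ae$ and $(a\lor e)\land(b\lor e)$, which are the identities the argument above establishes.
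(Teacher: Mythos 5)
Your proposal is correct, and it actually proves strictly more than the paper does. For item (6) --- the only item the paper proves at all --- your argument coincides exactly with the paper's: use Lemma 2.1(2) to upgrade $a\lor b=1$ to $a^{n}\lor b^{n}=1$, then apply item (4) to the pair $(a^{n},b^{n})$. The difference is in items (1)--(5), which the paper simply imports from \cite{Birkhoff} and \cite{Jipsen} without proof; you instead derive them from first principles, using only the quantale axioms (distributivity of the multiplication over arbitrary joins, hence $a\cdot a^{\bot}=0$, and integrality, hence $xy\leq x\land y$) together with Lemma 2.1(1). Your derivations are sound: the key decomposition $x=x(e\lor e^{\bot})=xe\lor xe^{\bot}$ is legitimate because item (1) gives $e\lor e^{\bot}=1$ for $e\in B(A)$ and integrality gives $x\cdot 1=x$, and you are right to route the arguments for (2) and (5) through the multiplication rather than through lattice distributivity, which is not available in a general quantale. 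Your reading of the typographical slips --- (2) as $a\land e=ae$ and (5) as $(a\land b)\lor e=(a\lor e)\land(b\lor e)$ --- is the intended one. What your approach buys is a self-contained verification that the cited properties really do hold for integral commutative quantales, the setting the paper works in; what the paper's citation buys is brevity. There is no gap in your argument.
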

\begin{proof}
The properties (1)-(5) are taken from \cite{Birkhoff},\cite{Jipsen} and (6) follows by (4) and Lemma 2.1,(ii).
\end{proof}
\begin{lema}
\cite{Cheptea1} If $ 1\in K(A)$ then $ B(A)\subseteq K(A)$.
\end{lema}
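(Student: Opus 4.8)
The plan is to show that an arbitrary $e \in B(A)$ is compact, exploiting the compactness of the top element $1$. First I would record the two facts about $e$ that drive the argument. By Lemma 4.1(1), membership in the Boolean center gives $e \lor e^{\bot} = 1$. Moreover, directly from the definition $e^{\bot} = \bigvee\{x \mid ex = 0\}$ together with the distributivity of multiplication over arbitrary joins, one gets $e \cdot e^{\bot} = \bigvee\{ex \mid ex = 0\} = 0$.

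Next I would use that $A$ is algebraic to write $e = \bigvee\{c \in K(A) \mid c \leq e\}$ and $e^{\bot} = \bigvee\{d \in K(A) \mid d \leq e^{\bot}\}$. Substituting these into $e \lor e^{\bot} = 1$ expresses the compact element $1$ as a join of a family of compact elements. Invoking the compactness of $1$, finitely many of them already join to $1$; grouping those below $e$ and those below $e^{\bot}$, and using that $K(A)$ is closed under finite joins, I obtain compact elements $s \leq e$ and $t \leq e^{\bot}$ with $s \lor t = 1$.

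Finally I would apply Lemma 2.1(4) with $a = s$, $b = t$ and $c = e$: from $s \lor t = 1$ and $s \leq e$ it yields $s \lor (t \cdot e) = e$. Since $t \leq e^{\bot}$ forces $t \cdot e \leq e^{\bot} \cdot e = 0$, this collapses to $s = e$. Hence $e = s \in K(A)$, and as $e$ was arbitrary we conclude $B(A) \subseteq K(A)$.

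I expect the only delicate point to be the passage from the infinite join of compacts to a finite subjoin, that is, applying the compactness of $1$ correctly and observing that finite joins of compact elements are again compact, so that $s$ and $t$ are genuinely in $K(A)$. Everything after that is a direct substitution into Lemma 2.1(4) combined with $e \cdot e^{\bot} = 0$.
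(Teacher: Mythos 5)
Your argument is sound from its second step onward, but it rests on a hypothesis the lemma does not grant: algebraicity of $A$. The statement assumes only $1 \in K(A)$ --- under the paper's standing conventions $A$ is just an integral commutative quantale, and the phrase ``Let us fix a coherent quantale $A$'' appears only \emph{after} this lemma --- yet your decompositions $e = \bigvee\{c \in K(A) \mid c \leq e\}$ and $e^{\bot} = \bigvee\{d \in K(A) \mid d \leq e^{\bot}\}$ are precisely the definition of $A$ being algebraic. Without that assumption these equalities cannot be invoked. In fact, for a complemented $e$ they \emph{do} hold in any quantale with compact top, but only because the lemma itself is true (both $e$ and $e^{\bot}$ turn out to be compact), so appealing to them at this point is circular. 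As written, your proof establishes the lemma for algebraic (in particular coherent) quantales --- which covers every later application in the paper --- but not the lemma as stated.

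The repair is short and uses exactly the trick of your last step, applied to an arbitrary cover of $e$ instead of to the canonical compact decomposition. Let $e \in B(A)$ and suppose $e \leq \bigvee_{i \in I} a_i$. As you observed, $e \lor e^{\bot} = 1$ and $e \cdot e^{\bot} = 0$. Then $1 = e \lor e^{\bot} \leq \bigvee_{i \in I}(a_i \lor e^{\bot})$, so compactness of $1$ yields a finite $F \subseteq I$ with $b \lor e^{\bot} = 1$, where $b = \bigvee_{i \in F} a_i$. Multiplying by $e$ and using distributivity of the product over joins together with integrality, $e = e \cdot 1 = e(b \lor e^{\bot}) = eb \lor e e^{\bot} = eb \leq b = \bigvee_{i \in F} a_i$. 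Hence $e \in K(A)$. This direct argument needs neither algebraicity nor the closure of $K(A)$ under finite joins, and it proves the statement in the generality in which it is asserted.
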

For a bounded distributive lattice $L$ we shall denote by $B(L)$ the Boolean algebra of the complemented elements of $L$. It is well-known that $B(L)$ is isomorphic  to the Boolean center $B(Id(L))$ of the frame $Id(L)$ (see \cite{Birkhoff}, \cite{Johnstone}, \cite{Cheptea}).

Let us fix a coherent quantale $A$.

\begin{lema}
Assume $c\in K(A)$. Then $\lambda_A(c) \in B(L(A))$ if and only if $c^n \in B(A)$, for some integer $n\geq 1$.
\end{lema}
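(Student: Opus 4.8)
The plan is to prove both implications directly from the three reticulation axioms (Definition 3.1), Lemma 3.1, the arithmetic of Lemma 2.1, and the Boolean-center facts of Lemma 4.1 and Lemma 4.2. The recurring technical point in both directions is that $\lambda_A$ turns products into meets (axiom (2)) but respects joins only one-sidedly, so I must convert between the lattice meet $c \land d$ and the quantale product $c \cdot d$. The bridge is Lemma 2.1(1): whenever $c \lor d = 1$ one has $c \cdot d = c \land d$, which lets me pass freely between the meet (used to express complementation in $L(A)$) and the product (which $\lambda_A$ controls).

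For the direction $c^n \in B(A) \Rightarrow \lambda_A(c) \in B(L(A))$, I would first observe that by Lemma 4.2 and the hypothesis $1 \in K(A)$, the complement $d$ of $c^n$ lies in $B(A) \subseteq K(A)$, so $\lambda_A(d)$ is defined. From $c^n \lor d = 1$ and $c^n \land d = 0$ I apply $\lambda_A$: the join gives $\lambda_A(c^n) \lor \lambda_A(d) = \lambda_A(1) = 1$ by Lemma 3.1(2),(3); for the meet I rewrite $c^n \land d = c^n \cdot d$ via Lemma 2.1(1), then apply axiom (2) and Lemma 3.1(4) to get $\lambda_A(c^n) \land \lambda_A(d) = \lambda_A(0) = 0$. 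Hence $\lambda_A(c^n)$ is complemented, and since $\lambda_A(c^n) = \lambda_A(c)$ by Lemma 3.1(6), I conclude $\lambda_A(c) \in B(L(A))$.

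For the converse $\lambda_A(c) \in B(L(A)) \Rightarrow c^n \in B(A)$, I would use that $\lambda_A$ is surjective, so the complement of $\lambda_A(c)$ in $L(A)$ has the form $\lambda_A(d)$ for some $d \in K(A)$. Decoding $\lambda_A(c) \lor \lambda_A(d) = 1$ through Lemma 3.1(2),(3) yields $c \lor d = 1$; decoding $\lambda_A(c) \land \lambda_A(d) = 0$ through axiom (2) and Lemma 3.1(5) yields $(c d)^n = c^n d^n = 0$ for some integer $n \geq 1$. Then $c \lor d = 1$ together with $c^n d^n = 0$ are exactly the hypotheses of Lemma 4.1(6), which delivers $c^n \in B(A)$ (and $d^n \in B(A)$).

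The main obstacle, and really the only subtle step, is the management of the exponent $n$ and the meet/product dictionary. In the forward direction this is harmless, since complementedness of $c^n$ is tested directly and $\lambda_A(c^n) = \lambda_A(c)$. In the backward direction the exponent is forced on us by Lemma 3.1(5): vanishing of a meet in $L(A)$ only guarantees nilpotency $(cd)^n = 0$ of the product, not $cd = 0$, so the cleanest finish is to invoke Lemma 4.1(6) verbatim rather than attempting to show $c \in B(A)$ itself (which is false in general). Beyond citing these lemmas in the correct order, I expect no genuine difficulty.
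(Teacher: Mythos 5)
Your proof is correct and takes essentially the same route as the paper's: both directions rest on the surjectivity of $\lambda_A$, the decoding properties $\lambda_A(a\lor b)=\lambda_A(a)\lor\lambda_A(b)$, $\lambda_A(a)=1 \Leftrightarrow a=1$, $\lambda_A(a)=0 \Leftrightarrow a^n=0$, $\lambda_A(a^n)=\lambda_A(a)$ (the paper's Lemma 3.2, which you cite as Lemma 3.1), and finally Lemma 4.1(6) applied to $c\lor d=1$ and $c^n d^n=0$. The only differences are presentational: you treat the directions in the opposite order and spell out the easy implication (via Lemma 2.1(1) and Lemma 4.2's $B(A)\subseteq K(A)$) that the paper compresses into a single sentence.
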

\begin{proof}
Assume $\lambda_A(c)\in B(L(A))$, hence $\lambda_A(c)\lor \lambda_A(d) = 1$ and $\lambda_A(c)\land \lambda_A(d) = 0$, for some $d\in K(A)$. Then $\lambda_A(c\lor d) = 1$ and $\lambda_A(cd) = 0$, hence, by Lemma 3.2,(2) and (5), it follows that $c \lor d = 1$ and $c^n d^n = 0$, for some integer $n\geq 1$. Therefore by Lemma 4.1,(6) one gets $c^n,d^n \in B(A)$. Conversely, if $c^n \in B(A)$ then $\lambda_A(c) = \lambda_A(c^n)$ is an element of $B(L(A))$.
\end{proof}
\begin{corolar}
\cite{Cheptea1} The function $\lambda_A|_{B(A)} :B(A)\rightarrow B(L(A))$ is a Boolean isomorphism.
\end{corolar}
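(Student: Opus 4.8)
The plan is to verify that $\lambda_A|_{B(A)}$ is a well-defined bijective bounded-lattice homomorphism between the Boolean algebras $B(A)$ and $B(L(A))$; since a bounded-lattice homomorphism automatically respects the (unique) complements, this will suffice. First I would check that the map is well defined with codomain $B(L(A))$. By Lemma 4.2 we have $B(A) \subseteq K(A)$, so $\lambda_A$ is defined on every $e \in B(A)$, and applying Lemma 4.4 with $n = 1$ gives $\lambda_A(e) \in B(L(A))$. Thus $\lambda_A|_{B(A)}$ indeed takes values in $B(L(A))$.

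Next I would establish the homomorphism properties. For $e, f \in B(A)$, both $e \lor f$ and $e \land f$ lie in $B(A) \subseteq K(A)$, so Lemma 3.2,(2) yields $\lambda_A(e \lor f) = \lambda_A(e) \lor \lambda_A(f)$. For the meet, I would first record that complemented elements satisfy $e \land f = ef$, a consequence of Lemma 4.1,(2), whence Definition 3.1,(2) gives $\lambda_A(e \land f) = \lambda_A(ef) = \lambda_A(e) \land \lambda_A(f)$. The bounds are preserved by Lemma 3.2,(3) and (4), which give $\lambda_A(1) = 1$ and $\lambda_A(0) = 0$. Hence $\lambda_A|_{B(A)}$ is a bounded-lattice homomorphism, and preservation of the complement $e^{\bot}$ then follows formally, since $\lambda_A(e) \lor \lambda_A(e^{\bot}) = 1$ and $\lambda_A(e) \land \lambda_A(e^{\bot}) = 0$.

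For injectivity I would exploit that every $e \in B(A)$ is idempotent: Lemma 4.1,(2) gives $e = e \land e = e^2$, so $e^n = e$ for all $n \geq 1$. If $\lambda_A(e) = \lambda_A(f)$ with $e, f \in B(A)$, then Definition 3.1,(3) applied in both directions yields $e^n \leq f$ and $f^m \leq e$ for suitable $n, m \geq 1$; idempotency collapses these to $e \leq f$ and $f \leq e$, so $e = f$. For surjectivity, let $\beta \in B(L(A))$. Since $\lambda_A : K(A) \rightarrow L(A)$ is surjective, choose $c \in K(A)$ with $\lambda_A(c) = \beta$; as $\beta \in B(L(A))$, Lemma 4.4 produces an integer $n \geq 1$ with $c^n \in B(A)$, and Lemma 3.2,(6) gives $\lambda_A(c^n) = \lambda_A(c) = \beta$. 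Thus $\beta$ lies in the image of $\lambda_A|_{B(A)}$.

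The substantive content of the argument is carried by the preceding Lemma 4.4, which underlies both the codomain claim and surjectivity; the remaining steps are routine translations of the reticulation axioms. I expect the only point requiring real care to be the surjectivity step, where one must pass from an arbitrary compact preimage $c$ of $\beta$ to the genuinely complemented element $c^n$, using that $\lambda_A$ is insensitive to powers.
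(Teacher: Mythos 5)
Your proof is correct and follows essentially the same route as the paper's: the paper also verifies directly that the restriction is an injective Boolean morphism (left as "easy to see") and derives surjectivity from the preceding lemma characterizing when $\lambda_A(c)$ lies in $B(L(A))$, exactly as you do by passing from a compact preimage $c$ to $c^n \in B(A)$. The only slip is notational: the lemma you cite as "Lemma 4.4" is Lemma 4.3 in the paper (the corollary itself carries the number 4.4), but this does not affect the argument.
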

\begin{proof}
It is easy to see that the function $\lambda_A|_{B(A)} :B(A)\rightarrow B(L(A))$ is an injective Boolean morphism. The surjectivity follows by using Lemma 4.3.
\end{proof}

If $L$ is bounded distributive lattice and $I\in Id(L)$ then the annihilator of $I$ is the ideal $Ann(I) = \{ x \in L |x \land y =0$, for all $y\in L \}$.

The next two propositions concern the behaviour of reticulation w.r.t. the annihilators.
\begin{propozitie}
If $a$ is an element of a coherent quantale then $Ann(a^\ast)=(a \rightarrow \rho(0))^\ast$; if $A$ is semiprime then $Ann(a^\ast)=(a^\perp)^\ast$.
\end{propozitie}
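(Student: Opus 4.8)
The plan is to verify both equalities elementwise in $L(A)$. Since $\lambda_A : K(A) \rightarrow L(A)$ is surjective, every element of $L(A)$ has the form $\lambda_A(c)$ with $c \in K(A)$, so it suffices to show that for each compact $c$ the membership $\lambda_A(c) \in Ann(a^{\ast})$ is equivalent to $\lambda_A(c) \in (a \rightarrow \rho(0))^{\ast}$.

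First I would unfold the left-hand side. Because $a^{\ast} = \{\lambda_A(d) \mid d \in K(A),\ d \leq a\}$, the condition $\lambda_A(c) \in Ann(a^{\ast})$ means that $\lambda_A(c) \wedge \lambda_A(d) = 0$ for every compact $d \leq a$. Using $\lambda_A(cd) = \lambda_A(c) \wedge \lambda_A(d)$ (Definition 3.1(2)), together with $cd \in K(A)$ and Lemma 3.2(8), this reads $cd \leq \rho(0)$ for all compact $d \leq a$. Since $A$ is coherent, hence algebraic, and multiplication distributes over arbitrary joins, $ca = \bigvee\{cd \mid d \in K(A),\ d \leq a\}$; therefore the whole family of inequalities collapses to the single inequality $ca \leq \rho(0)$, which by the residuation adjunction is precisely $c \leq a \rightarrow \rho(0)$. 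Thus $\lambda_A(c) \in Ann(a^{\ast})$ iff $c \leq a \rightarrow \rho(0)$.

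For the right-hand side I would first record the general fact that, for any $b \in A$ and $c \in K(A)$, one has $\lambda_A(c) \in b^{\ast}$ iff $c \leq \rho(b)$: this follows by applying Lemma 3.5 to the ideal $I = b^{\ast}$ and then using $(b^{\ast})_{\ast} = \rho(b)$ from Lemma 3.4. Applied to $b = a \rightarrow \rho(0)$, it gives $\lambda_A(c) \in (a \rightarrow \rho(0))^{\ast}$ iff $c \leq \rho(a \rightarrow \rho(0))$. Comparing with the previous paragraph, the two membership conditions coincide exactly when $a \rightarrow \rho(0)$ is a radical element.

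The crux of the proof is therefore to show that $b := a \rightarrow \rho(0)$ is radical, and I expect this to be the only genuine obstacle. By definition of the residual $ba \leq \rho(0)$, and since always $b \leq \rho(b)$ it remains to prove $\rho(b) \leq b$, i.e.\ $\rho(b) \cdot a \leq \rho(0)$. This I would obtain from integrality and the radical calculus: $\rho(b) \cdot a \leq \rho(b) \wedge a \leq \rho(b) \wedge \rho(a) = \rho(ba) \leq \rho(\rho(0)) = \rho(0)$, where the first step uses $xy \leq x \wedge y$, the second uses $a \leq \rho(a)$ (Lemma 2.2(1)), the middle equality is Lemma 2.2(2), and the last steps use monotonicity of $\rho$ and Lemma 2.2(5). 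Hence $\rho(b) \leq a \rightarrow \rho(0) = b$, so $b$ is radical, giving $\rho(a \rightarrow \rho(0)) = a \rightarrow \rho(0)$ and with it the first equality $Ann(a^{\ast}) = (a \rightarrow \rho(0))^{\ast}$. Finally, if $A$ is semiprime then $\rho(0) = 0$, whence $a \rightarrow \rho(0) = a \rightarrow 0 = a^{\perp}$, and the second equality $Ann(a^{\ast}) = (a^{\perp})^{\ast}$ is immediate from the first.
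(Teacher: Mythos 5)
Your proof is correct. The computational core coincides with the paper's: both unfold membership in $Ann(a^\ast)$ through compact elements, use $\lambda_A(cd)=\lambda_A(c)\wedge\lambda_A(d)$ together with Lemma 3.2,(8) to turn annihilation into $cd\leq\rho(0)$, and then collapse the family of conditions over all compact $d\leq a$ into the single inequality $c\leq a\rightarrow\rho(0)$ via the residuation adjunction (the paper writes this as $\bigwedge\{d\rightarrow\rho(0)\}=(\bigvee\{d\})\rightarrow\rho(0)$, you write $ca=\bigvee\{cd\}\leq\rho(0)$ --- the same fact in adjoint form). The genuine difference is how membership in $(a\rightarrow\rho(0))^\ast$ is handled. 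The paper proves two inclusions and exploits the existential definition of $b^\ast$: an element of $b^\ast$ comes equipped with a witness $c\leq b$, and conversely $c\leq a\rightarrow\rho(0)$ immediately puts $\lambda_A(c)$ into $(a\rightarrow\rho(0))^\ast$, so no further lemma is needed. You instead fix an arbitrary compact preimage $c$ of a given element of $L(A)$ and invoke the representative-independent criterion $\lambda_A(c)\in b^\ast$ iff $c\leq\rho(b)$ (via Lemmas 3.4 and 3.5), which then forces you to prove the auxiliary fact that $a\rightarrow\rho(0)$ is a radical element; your verification of this ($\rho(b)\cdot a\leq\rho(b)\wedge\rho(a)=\rho(ba)\leq\rho(\rho(0))=\rho(0)$, hence $\rho(b)\leq b$) is correct, and it is the one ingredient absent from the paper's argument. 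The cost is a slightly longer proof; what it buys is a clean chain of equivalences that never depends on the choice of preimage, plus the reusable observation that residuals into $\rho(0)$ --- in particular $a^{\perp}$ when $A$ is semiprime --- are always radical elements.
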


\begin{proof}
Assume $x \in Ann(a^\ast)$, so $x=\lambda_A(c)$ for some $c \in K(A)$ with the property that for all $d \in K(A)$, $d \leq a$ implies $\lambda_A(cd)=\lambda_A(c) \land \lambda_A(d)=0$. By Lemma 3.2(8) one gets $cd \leq \rho(0)$, so $c \leq d \rightarrow \rho(0)$. Thus the following hold: $c \leq \bigwedge \{ d \rightarrow \rho(0)| d \in K(A), d \leq a\}=(\bigvee \{ d\in K(A)|d \leq a\})\rightarrow \rho(0)=a \rightarrow \rho(0)$,
hence $x=\lambda_A(c) \in (a \rightarrow \rho(0))^\ast$. We conclude that $Ann(a^\ast)\subseteq (a \rightarrow \rho(0))^\ast$.

In order to prove that $(a \rightarrow \rho(0))^\ast\subseteq Ann(a^\ast)$ assume that $x\in (a \rightarrow \rho(0))^\ast$, so $x = \lambda_A(c)$ for some $c\in K(A)$ such that  $c \leq a \rightarrow \rho(0)$. For all $d\in K(A)$ with $d\leq a$ we have $c \leq a \rightarrow \rho(0)\leq d \rightarrow \rho(0)$. By Lemma 3.2,(8) one gets $\lambda_A(c)\land \lambda(d)$ = $\lambda_A(cd) = 0$, so $x = \lambda_A(c)\in Ann(a^\ast)$.

\end{proof}

\begin{propozitie}
Assume that $A$ is a coherent quantale. If $I$ is an ideal of $L(A)$ then $(Ann(I))_\ast=I_\ast \rightarrow \rho(0)$; if $A$ is semiprime then
$(Ann(I))_\ast=(I_\ast)^\perp$.
\end{propozitie}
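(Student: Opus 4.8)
The plan is to deduce this from Proposition 4.5 by transporting the identity across the frame isomorphism $\Psi : Id(L(A)) \rightarrow R(A)$, $\Psi(I) = I_\ast$. First I would specialize Proposition 4.5 to the element $a = I_\ast$. By Lemma 3.3,(2) we have $(I_\ast)^\ast = I$, so the conclusion of Proposition 4.5 reads $Ann(I) = Ann((I_\ast)^\ast) = (I_\ast \rightarrow \rho(0))^\ast$. Applying the operation $(\,\cdot\,)_\ast$ to both sides and using Lemma 3.4, which gives $(x^\ast)_\ast = \rho(x)$ for every $x \in A$, I obtain $(Ann(I))_\ast = \big((I_\ast \rightarrow \rho(0))^\ast\big)_\ast = \rho(I_\ast \rightarrow \rho(0))$.

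It then remains to remove the radical operator, that is, to show that $I_\ast \rightarrow \rho(0)$ is already a radical element, so that $\rho(I_\ast \rightarrow \rho(0)) = I_\ast \rightarrow \rho(0)$. This is the step I expect to carry the real content, and I would isolate it as the following claim: for every $a \in A$, the element $a \rightarrow \rho(0)$ is radical. To prove the claim, set $b = a \rightarrow \rho(0)$; the residuation adjunction together with the distributivity of $\cdot$ over joins gives $ab \leq \rho(0)$. Since $b \leq \rho(b)$ always holds, it suffices to show $\rho(b) \leq b$, equivalently $a\,\rho(b) \leq \rho(0)$. Here I would compute, using Lemma 2.2, that $a\,\rho(b) \leq \rho(a\,\rho(b)) = \rho(a) \land \rho(\rho(b)) = \rho(a) \land \rho(b) = \rho(ab) \leq \rho(\rho(0)) = \rho(0)$, where monotonicity of $\rho$ is applied to $ab \leq \rho(0)$ and the identity $\rho(xy) = \rho(x) \land \rho(y)$ is used twice. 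From $a\,\rho(b) \leq \rho(0)$ the adjunction yields $\rho(b) \leq a \rightarrow \rho(0) = b$, proving the claim.

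Combining the claim with the displayed computation gives $(Ann(I))_\ast = I_\ast \rightarrow \rho(0)$, the first assertion. For the semiprime case I would simply substitute $\rho(0) = 0$: then $I_\ast \rightarrow \rho(0) = I_\ast \rightarrow 0 = (I_\ast)^\perp$ by the definition of the negation, which yields $(Ann(I))_\ast = (I_\ast)^\perp$.

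The only nontrivial point is the claim that $a \rightarrow \rho(0)$ is radical; everything else is a formal transport of Proposition 4.5 along $\Psi$. The claim itself is a short residuated-lattice computation, its crucial ingredient being the identity $\rho(xy) = \rho(x) \land \rho(y)$ of Lemma 2.2,(2), which converts the product $a\,\rho(b)$ back into $\rho(ab)$ and thereby lets the bound $ab \leq \rho(0)$ propagate to $\rho(b)$.
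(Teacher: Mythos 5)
Your proof is correct, but it follows a genuinely different route from the paper's. The paper never invokes Proposition 4.5 here: it proves both inequalities directly at the level of compact elements, using the distributivity $cI_\ast = \bigvee\{cd \mid d \in K(A),\ \lambda_A(d) \in I\}$, Lemma 2.4 to pass from $cd \leq \rho(0)$ to $c^n d^n = 0$, and Lemmas 3.2,(8) and 3.5 to move between $\lambda_A(c) \in Ann(I)$ and $c \leq (Ann(I))_\ast$ --- in effect re-running the computations of Proposition 4.5 in the dual direction. You instead transport Proposition 4.5 formally along the correspondence $(\,\cdot\,)^\ast$, $(\,\cdot\,)_\ast$ (Lemmas 3.3,(2) and 3.4), which reduces everything to the single claim that $a \rightarrow \rho(0)$ is a radical element; your verification of that claim via $a\,\rho(b) \leq \rho(a)\land\rho(b) = \rho(ab) \leq \rho(0)$ and the residuation adjunction is sound (monotonicity of $\rho$ follows from Lemma 2.2,(2)). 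What your approach buys is economy --- no duplication of the compact-element bookkeeping --- plus a reusable general fact, namely that $R(A)$ is closed under residuation into $\rho(0)$; what the paper's approach buys is self-containedness: it needs no observation about radicality of residuals, only the lemmas already used for Proposition 4.5. Both are complete proofs, and your semiprime case (substituting $\rho(0)=0$ and using $a^\perp = a \rightarrow 0$) matches the paper's implicit argument.
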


\begin{proof}
In order to verify that $I_\ast \rightarrow \rho(0) \leq (Ann(I))_\ast$, it suffices to show that for all $c \in K(A)$, $c \leq I_\ast \rightarrow \rho(0)$ implies $c \leq (Ann(I))_\ast$. If $c \leq I_\ast \rightarrow \rho(0)$ then

$\bigvee \{cd |d\in K(A), \lambda_A(d) \in I\}=c( \bigvee \{d\in K(A)|\lambda_A(d) \in I\})=c I_\ast \leq \rho(0)$.

Thus for all $d \in K(A)$ with $\lambda_A(d) \in I$ we have $cd \leq \rho(0)$ so $c^n d^n=0$ for some integer $n \geq 1$ (cf. Lemma 2.4 (ii)). It follows that
$\lambda_A(c) \land \lambda_A(d) =\lambda_A(c^n d^n)=0$, hence $\lambda_A(c) \in Ann(I)$, i.e. $c \leq (Ann(I))_\ast$.

Assume now that $c \in K(A)$ and $c \leq (Ann(I))_\ast$, hence by Lemma 3.5, $\lambda_A(c) \in Ann(I)$. For any $c \in K(A)$ with $\lambda_A(d) \in I$ we have $\lambda_A(cd)=\lambda_A(c) \land \lambda_A(d)=0$, hence by Lemma 3.2 (8) one gets $cd\leq \rho(0)$. Therefore we have $cI_\ast=\bigvee \{cd|d \in K(A), \lambda_A(d) \in I\} \leq \rho(0)$, i.e. $c \leq I_\ast \rightarrow \rho(0)$. Then the inequality $(Ann(I))_\ast \leq I_\ast \rightarrow \rho(0)$ is proven, so the equality $(Ann(I))_\ast=I_\ast \rightarrow\rho(0)$ follows.
\end{proof}

An element $a$ of an arbitrary quantale $A$ is said to be pure (or virginal, in the terminology of  \cite{GeorgescuVoiculescu2}) if for all $c\in K(A)$, $c\leq a$ implies $a \lor c^{\perp} = 1$. The pure elements in a quantale extend the pure ideals of a ring \cite{Lam},\cite{SimmonsC} and the $\sigma$ -ideals of a bounded distributive lattice \cite{Cornish1}, \cite{GeorgescuVoiculescu}. More precisely, an ideal $I$ of bounded distributive lattice $L$ is a $\sigma$ - ideal if for all $x\in I$, we have $I\lor Ann(x) = L$.
\begin{lema}
If an element $a$ of coherent quantale $A$ is pure then $a^{\ast}$ is a $\sigma$- ideal of the reticulation $L(A)$. If moreover $A$ is semiprime then for each $\sigma$ - ideal $J$ of $L(A)$, $J_{\ast}$ is a pure element of $A$.
\end{lema}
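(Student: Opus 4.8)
The plan is to prove the two implications separately, in each case transporting the condition across the reticulation correspondence via $\lambda_A$, the operators $(-)^{\ast}$, $(-)_{\ast}$, and the translation lemmas of Sections $3$ and $4$.

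For the first implication, fix $x\in a^{\ast}$; by definition $x=\lambda_A(c)$ for some $c\in K(A)$ with $c\le a$, and I must show $a^{\ast}\lor Ann(x)=L(A)$. Purity of $a$ gives $a\lor c^{\perp}=1$. Since $A$ is coherent, $1\in K(A)$; writing $a$ and $c^{\perp}$ as directed joins of the compacts below them, compactness of $1$ yields compacts $d\le a$ and $e\le c^{\perp}$ with $d\lor e=1$. Then $\lambda_A(d)\in a^{\ast}$, while $e\le c^{\perp}$ gives $ec\le c^{\perp}c=0$, so $\lambda_A(e)\land x=\lambda_A(e)\land\lambda_A(c)=\lambda_A(ec)=\lambda_A(0)=0$ by Definition 3.1(2) and Lemma 3.2(4); thus $\lambda_A(e)\in Ann(x)$. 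Finally $\lambda_A(d)\lor\lambda_A(e)=\lambda_A(d\lor e)=\lambda_A(1)=1$ by Lemma 3.2(2),(3), so $1\in a^{\ast}\lor Ann(x)$ and $a^{\ast}$ is a $\sigma$-ideal. A machinery-based alternative avoids the compactness extraction: by Lemma 3.3(5) one has $(x]=c^{\ast}$, so $Ann(x)=(c\to\rho(0))^{\ast}$ by Proposition 4.5, and since $c^{\perp}\le c\to\rho(0)$ the identity $(u\lor v)^{\ast}=u^{\ast}\lor v^{\ast}$ (which follows from Proposition 3.9 and Lemma 3.4) turns $a\lor c^{\perp}=1$ into $a^{\ast}\lor Ann(x)=L(A)$.

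For the converse, assume $A$ semiprime and let $J$ be a $\sigma$-ideal of $L(A)$; I must show that every compact $c\le J_{\ast}$ satisfies $J_{\ast}\lor c^{\perp}=1$. By Lemma 3.5, $x:=\lambda_A(c)\in J$, so the $\sigma$-ideal condition produces $y\in J$ and $z\in Ann(x)$ with $y\lor z=1$. Choosing compacts with $y=\lambda_A(d)$, $z=\lambda_A(e)$, Lemma 3.5 gives $d\le J_{\ast}$, while $\lambda_A(e)\land\lambda_A(c)=\lambda_A(ec)=0$ forces $ec\le\rho(0)=0$ by Lemma 3.2(8) and semiprimeness, hence $e\le c^{\perp}$. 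From $\lambda_A(d\lor e)=1$ and Lemma 3.2(3) we get $d\lor e=1$, so $J_{\ast}\lor c^{\perp}\ge d\lor e=1$, as required. A cleaner but heavier route applies $(-)_{\ast}$ to $J\lor Ann(x)=L(A)$, using Corollary 3.10, Lemma 2.2(3) and the semiprime form of Proposition 4.6 together with $(c^{\ast})_{\ast}=\rho(c)$ to obtain $J_{\ast}\lor(\rho(c))^{\perp}=1$, whence $J_{\ast}\lor c^{\perp}=1$.

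The step I expect to be the real crux is the translation between the negation $c^{\perp}$ in $A$ and annihilators in $L(A)$. This passes through $\lambda_A(ec)=\lambda_A(e)\land\lambda_A(c)$ and Lemma 3.2(8), which only yields $ec\le\rho(0)$; it collapses to the literal $ec=0$ (equivalently $e\le c^{\perp}$) exactly when $A$ is semiprime, which explains why semiprimeness is needed in the converse but not in the first implication. I would also check the bookkeeping that the annihilator $Ann(x)$ of the element $x$ agrees with $Ann((x])$, so that Propositions 4.5 and 4.6 apply, and that $\lambda_A(1)=1$ and $(L(A))_{\ast}=1$.
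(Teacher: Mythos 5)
Your proposal is correct and follows essentially the same route as the paper's proof: in both directions you extract compact elements $d\leq a$ (resp.\ with $\lambda_A(d)\in J$) and $e\leq c^{\perp}$ (resp.\ $\lambda_A(e)\in Ann(\lambda_A(c))$) from a join equal to $1$ using compactness of $1$, transport everything through $\lambda_A$ via Lemmas 3.2 and 3.5, and invoke semiprimeness exactly where the paper does, namely to collapse $ec\leq\rho(0)$ to $ec=0$ in the converse. Your closing diagnosis of why semiprimeness is needed only in the second implication matches the paper's argument, and your statement $a^{\ast}\lor Ann(x)=L(A)$ even silently corrects a small typo in the paper's own write-up (which has $Ann(\lambda_A(e))$ where $Ann(\lambda_A(c))$ is meant).
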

\begin{proof}
Assume $x\in a^{\ast}$, so $x = \lambda_A(c)$ for some $c\in K(A)$ with $c\leq a$. Since $a$ is pure, $c\leq a$ implies $a\lor c^{\perp}= 1$, so $d\lor e = 1$ for some $d,e\in K(A)$ with the properties $d\leq a$ and $e\leq c^{\perp}$. Thus $\lambda_A(d)\in a^{\ast}$ and $\lambda_A(c)\land\lambda_A(e) = \lambda_A(ce) = \lambda(0) = 0$, i.e. $\lambda_A(e)\in Ann(\lambda_A(c))$. We observe that $\lambda_A(d)\lor \lambda_A(e) = \lambda_A(d\lor e) = 1$, so $a^{\ast} \lor Ann(\lambda_A(e)) = L(A)$. Thus $a^{\ast}$ is a $\sigma$ - ideal.

Now we assume that $A$ is semiprime and J is $\sigma$ - ideal of $L(A)$. In order to prove that $J_{\ast}$ is a pure element of $A$ let us consider a compact element $c$ of $A$ such that $c\leq J_{\ast}$. By Lemma 3.5 we have $\lambda_A(c)\in J$, hence $J\lor Ann(\lambda_A(c))$ = $L(A)$, so there exist two compact elements $d$ and $e$ of $A$ such that $\lambda_A(d)\in J$, $\lambda_A(e)\in Ann(\lambda_A(c))$ and $\lambda_A(d\lor e)$ = $\lambda_A(d)\lor \lambda_A(e)$ = $1$. According to Lemmas 3.5 and 3.2,(3) we get $d\leq J_{\ast}$ and $d\lor e = 1$. From $\lambda_A(e)\in Ann(\lambda_A(c))$ we infer $\lambda_A(ce)$ = $\lambda_A(c)\land \lambda_A(e)$ = $0$, hence $ce = 0$ (because $A$ is semiprime). Thus $e\leq c^{\perp}$, therefore $1$ = $d\lor e\leq J_{\ast}\lor c^{\perp}$. It follows that $J_{\ast}\lor c^{\perp} = 1$, hence $J$ is a $\sigma$ - ideal of $L(A)$.

\end{proof}

\section{Three topological structures on the prime spectrum}

 \hspace{0.5cm} In this section we shall discuss some basic properties concerning three topologies defined on the prime spectrum $Spec(A)$ of a coherent quantale $A$: spectral topology, flat topology and patch topology.
A topological space $(X, \Omega)$ is said to be spectral \cite{Hochster} ( or coherent in the terminology of \cite{Johnstone}) if it is sober and the family $K(\Omega)$ of compact open sets of $X$ is closed under finite intersections, and forms a basis for the topology. The standard examples of spectral spaces are the prime spectrum $Spec(A)$ of a commutative ring $R$ (with the Zariski topology) and the prime spectrum $Spec_{Id}(L)$ of bounded distributive lattice L (with the Stone topology). If $(X,\Omega)$ is a spectral space then in a standard way (see \cite{Dickmann},\cite{Johnstone}) one can define on $X$ the following two topologies:

$\bullet$ the patch topology, having as basis the family of sets $U\bigcup V$, where $U$ is a compact open set in $X$ and $V$ is the complement of a compact open
set (this topological space is denoted by $X_P$);

$\bullet$ the flat topology, having as basis the family of the complements of compact open sets in $X$ (this topological space is denoted by $X_F$).

\begin{lema}
\cite{Eklud}, \cite{Johnstone}
$X_P$ is a Boolean space and $X_F$ is a spectral space.
\end{lema}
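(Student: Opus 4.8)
The statement to prove is that for a spectral space $(X,\Omega)$, the patch space $X_P$ is a Boolean space (compact, Hausdorff, zero-dimensional) and the flat space $X_F$ is a spectral space. The plan is to treat the two assertions separately, since they have quite different flavors. For the patch topology, the basis consists of sets $U \cup V$ where $U$ is compact open and $V$ is the complement of a compact open set. The first thing I would do is record that the basic sets $U$ (compact open) together with their complements $V = X \setminus W$ (for $W$ compact open) are all simultaneously clopen in $X_P$: each $U$ is open by definition, and it is also closed because its complement $X \setminus U$ is the complement of a compact open set, hence basic open; dually each $V$ is open by definition and closed because $W = X \setminus V$ is compact open, hence open. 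This immediately gives zero-dimensionality, since the patch topology is generated by a family of clopen sets. Hausdorffness then follows because in a spectral space two distinct points are separated by some compact open set $U$ (by sobriety / the $T_0$ separation refined through the basis), and that $U$ and its complement give disjoint patch-open neighborhoods.

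The genuinely substantive part is \textbf{compactness} of $X_P$, and this is where I expect the main obstacle to lie. The standard route is an Alexander subbase argument: the collection $\{U : U \in K(\Omega)\} \cup \{X \setminus W : W \in K(\Omega)\}$ is a subbasis for $X_P$, so by Alexander's subbase lemma it suffices to show that every cover of $X$ by subbasic patch-open sets has a finite subcover. Given such a cover, split it into the compact-open pieces $U_i$ and the complementary pieces $X \setminus W_j$. If the $U_i$ alone already cover $X$, compactness of $X$ in $\Omega$ finishes the job; otherwise the complement of $\bigcup_i U_i$ is nonempty and must be covered by the $X \setminus W_j$, equivalently $\bigcap_j W_j \subseteq \bigcup_i U_i$. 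The delicate step is to pass from this containment to a finite subfamily, which uses that each $W_j$ is compact open and that the spectral space is quasi-compact so that a suitable finite intersection argument (or the dual frame-theoretic finite-cover property of the $W_j$) applies. This bookkeeping is the crux, and I would lean on the fact established in the literature (cited as \cite{Eklud}, \cite{Johnstone}) that the constructible topology on a spectral space is compact Hausdorff totally disconnected.

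For the flat topology $X_F$, whose basis is the family of complements of compact open sets, the plan is to verify the defining conditions of a spectral space directly. The open sets of $X_F$ are generated by complements $X \setminus W$ of compact open $W$, so the \emph{closed} sets of $X_F$ are generated by the compact open sets of $X$; intuitively $X_F$ is obtained from $X$ by reversing the order on the underlying distributive lattice of compact opens, which is exactly why the flat topology is also called the inverse topology. I would show that the compact open sets of $X_F$ are precisely the finite intersections of basic sets $X \setminus W$, that these are closed under finite intersection and form a basis, and that $X_F$ is sober. The cleanest conceptual argument is to observe that passing from a spectral space to its inverse space corresponds, under Stone duality, to passing from a bounded distributive lattice to its order dual, and the order dual of a bounded distributive lattice is again a bounded distributive lattice; hence $X_F$ is the spectrum of a bounded distributive lattice and is therefore spectral. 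I would organize the write-up so that the Hausdorff/Boolean claims for $X_P$ come first, then the compactness via Alexander, and finally the spectrality of $X_F$ via the inverse-lattice duality, citing \cite{Eklud} and \cite{Johnstone} for the parts that are standard and letting the subbase compactness argument carry the real weight.
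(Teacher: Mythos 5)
The paper does not actually prove this lemma: it is imported wholesale from \cite{Eklud} and \cite{Johnstone}, so your proposal has to be judged against the standard arguments in those sources rather than against anything in the text. Measured that way, your second half is fine: identifying $X_F$ with the spectrum of the order-dual lattice $K(\Omega)^{op}$ under Stone duality is correct and is the cleanest route to spectrality of the flat topology. Your clopen-basis and Hausdorff arguments for $X_P$ are also correct (spectral spaces are $T_0$, and a compact open set containing exactly one of two given points is patch-clopen, so it separates them). Incidentally, you were right to quietly work with the subbasis $\{U\}\cup\{X\setminus W\}$ rather than the paper's stated ``basis'' of sets $U\cup V$, which is a typo (the paper's own Remarks 5.2 and 5.3 use $D(c)\cap V(d)$).

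The genuine gap is exactly where you place it: compactness of $X_P$. The mechanism you gesture at --- that each $W_j$ is compact open and $(X,\Omega)$ is quasi-compact, ``so a suitable finite intersection argument applies'' --- does not work: quasi-compactness of $X$ refines open covers of $X$, and says nothing about covering the set $\bigcap_j W_j$, which is in general neither closed nor quasi-compact nor open in $\Omega$. What the step actually needs is sobriety, in lattice form. If no finite $F,G$ give $\bigcap_{j\in F}W_j \subseteq \bigcup_{i\in G}U_i$, then in the distributive lattice $K(\Omega)$ the filter generated by $\{W_j\}$ and the ideal generated by $\{U_i\}$ are disjoint, so the Birkhoff--Stone prime filter separation theorem yields a prime filter of $K(\Omega)$ containing every $W_j$ and no $U_i$; since $X$ is spectral, every prime filter of $K(\Omega)$ is the compact-open neighbourhood filter of a (unique) point, and that point lies in $\bigcap_j W_j$ but outside $\bigcup_i U_i$, contradicting the assumed containment. (Alternatively, embed $X$ into the Cantor cube $2^{K(\Omega)}$ by $x \mapsto (\chi_U(x))_{U\in K(\Omega)}$, check the image is closed, and invoke Tychonoff.) Deferring this point to the cited literature mirrors what the paper itself does, but as a self-contained proof your proposal leaves the one substantive claim of the lemma unproven, and the hint it does give points at the wrong tool.
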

\begin{remarca}
If $L$ is a bounded distributive lattice and $X$ is the spectral space $Spec_{Id}(L)$ then the family $(D_{Id}(x)\bigcap V_{Id}(y))_{x,y\in L}$ is a basis of open sets for $X_P$ and the family $(V_{Id}(y))_{y\in L}$ is a basis of open sets for $X_F$.
\end{remarca}

Let $A$ be a coherent quantale. By Proposition 3.7, $Spec_Z(A)$ is homeomorphic with the spectral space $Spec_{Id}(L(A))$, hence it is a spectral space. The family of open sets in $Spec_Z(A)$ will be denoted by $\mathcal{Z} = \mathcal{Z}_A$. For any subset $S$ of $Spec(A)$, $cl_Z(S)
 = V(\bigcap{S})$ is the closure of $S$ in $Spec_Z(A)$; for all $p\in Spec(A)$, we have $cl_Z(\{p\}) = V(p)$.
 Now we can consider the two topologies associated with the spectral space $X = Spec_Z(A)$: the patch topology and the flat topology. We will denote $Spec_P(A) = X_P$ and $Spec_F(A) = X_F$; $\mathcal{P} = \mathcal{P}_A$ will be the family of open sets
 in $Spec_P(A)$ and $\mathcal{F} = \mathcal{F}_A$ the family of open sets in $Spec_F(A)$.
 \begin{remarca}
 (i) The family $\{D(c)\bigcap V(d)| c,d\in K(A)\}$ is a basis of open sets for $Spec_P(A)$;

 (ii) The family $\{V(c)| c\in K(A)\}$ is a basis of open sets for $Spec_F(A)$.
 \end{remarca}
 \begin{remarca}
 (i) The patch topology on $Spec(A)$ is finer than the spectral and the flat topologies on $Spec(A)$ ( i.e. $\mathcal{Z} \subseteq \mathcal{P}$  and $\mathcal{F}  \subseteq  \mathcal{P}$);

 (ii) The inclusions $\mathcal{Z} \subseteq \mathcal{P}$  and $\mathcal{F}  \subseteq  \mathcal{P}$ show that the identity functions $id: Spec_P(A) \rightarrow Spec_Z(A)$  and  ${id:Spec_P(A) \rightarrow  Spec_F(A)}$ are continuous.
 \end{remarca}
 \begin{propozitie}
 The two inverse functions $u: Spec(A)\rightarrow Spec_{Id}(L(A))$  and $v: Spec_{Id}(L(A))\rightarrow Spec(A)$ from Proposition 3.7 are homeomorphisms w.r.t  the patch and the flat topologies.
 \end{propozitie}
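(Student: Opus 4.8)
The plan is to reduce everything to the fact, recorded in Proposition 3.7, that $u$ and $v$ are already mutually inverse homeomorphisms between $Spec_Z(A)$ and $Spec_{Id,Z}(L(A))$, together with the observation that both the patch and the flat topologies are determined canonically by the underlying spectral topology through its compact open sets. Since any homeomorphism of spectral spaces carries compact open sets bijectively onto compact open sets, it automatically respects both derived topologies; the only real work is to make this transfer explicit on the bases supplied by Remark 5.2 and Remark 5.3.

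First I would pin down how $u$ acts on the generating sets of the Zariski-type topology. Fix $c \in K(A)$. By Lemma 3.6, for every $p \in Spec(A)$ one has $c \leq p$ iff $\lambda_A(c) \in p^{\ast}$, that is, $p \in V(c)$ iff $u(p) \in V_{Id}(\lambda_A(c))$; passing to complements, $p \in D(c)$ iff $u(p) \in D_{Id}(\lambda_A(c))$. Since $u$ is a bijection (Proposition 3.7), these equivalences yield the image identities $u(V(c)) = V_{Id}(\lambda_A(c))$ and $u(D(c)) = D_{Id}(\lambda_A(c))$ for all $c \in K(A)$, and reading them through $v = u^{-1}$ gives the corresponding statements for $v$.

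Now I would invoke the surjectivity of $\lambda_A : K(A) \rightarrow L(A)$. As $c$ ranges over $K(A)$ the element $\lambda_A(c)$ ranges over all of $L(A)$, so the family $\{\,u(V(c)) : c \in K(A)\,\}$ coincides with $\{\,V_{Id}(y) : y \in L(A)\,\}$, which by Remark 5.2 is a basis for the flat topology on $Spec_{Id}(L(A))$; on the other side $\{\,V(c) : c \in K(A)\,\}$ is a basis for $Spec_F(A)$ by Remark 5.3(ii). Hence $u$ maps a basis of the flat topology bijectively onto a basis of the flat topology, and, being a bijection with inverse $v$, it is a homeomorphism for the flat topologies (and so is $v$). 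For the patch topology I would argue identically: using $u(D(c) \cap V(d)) = u(D(c)) \cap u(V(d)) = D_{Id}(\lambda_A(c)) \cap V_{Id}(\lambda_A(d))$ together with the surjectivity of $\lambda_A$, the map $u$ sends the basis $\{\,D(c) \cap V(d) : c,d \in K(A)\,\}$ of $Spec_P(A)$ (Remark 5.3(i)) onto the basis $\{\,D_{Id}(x) \cap V_{Id}(y) : x,y \in L(A)\,\}$ of the patch topology on $Spec_{Id}(L(A))$ (Remark 5.2).

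There is no substantive obstacle here: the argument is a formal transport of structure along the bijection $u$. The one point demanding a little care is the logical step from ``$u$ carries basic open sets to basic open sets (in both directions)'' to ``$u$ is a homeomorphism,'' which is legitimate precisely because the indicated families are genuine bases and $u$, paired with $v = u^{-1}$, is a bijection; from this the continuity and openness of both $u$ and $v$ in each of the two topologies follow at once. I would therefore isolate the two basis identities $u(V(c)) = V_{Id}(\lambda_A(c))$ and $u(D(c)) = D_{Id}(\lambda_A(c))$ as the core of the proof and let the topological conclusion drop out formally.
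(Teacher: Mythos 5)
Your proof is correct and follows essentially the same route as the paper: both hinge on Lemma 3.6 to establish the basic-set identities $u^{-1}(V_{Id}(\lambda_A(c)))=V(c)$ and $u^{-1}(D_{Id}(\lambda_A(c)))=D(c)$, and then conclude via the bases of Remarks 5.2 and 5.3. Your version merely makes explicit two points the paper leaves implicit, namely the role of the surjectivity of $\lambda_A$ and the transfer from basis-to-basis bijection to homeomorphism.
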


 \begin{proof}
 Applying Lemma 3.6 it is easy to prove that for all $c \in K(A)$, the following equalities $u^{-1}(V_{Id}(\lambda_A(c)))=V(c)$ and $u^{-1}(D_{Id}(\lambda_A(c)))=D(c)$ hold. Therefore, by Remarks 5.2 and 5.3, $u$ is patch and flat continuous.
 \end{proof}

 For any $p\in Spec(A)$, let us denote $\Lambda(p) = \{q\in Spec(A)| q\leq p\}$.

 \begin{propozitie}
 For any  $p\in Spec(A)$, the flat closure of the set $\{p\}$ is $cl_F\{p\}$ = $\Lambda(p)$.
 \end{propozitie}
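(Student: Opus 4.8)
The plan is to characterize membership in the flat closure through the basis of $Spec_F(A)$ given in Remark 5.2(ii), namely $\{V(c) \mid c \in K(A)\}$, and then translate everything into the order of $A$. First I would use the general fact that $q \in cl_F\{p\}$ precisely when every flat-open neighborhood of $q$ contains $p$; since the $V(c)$ form a basis, it suffices to test basic neighborhoods, so $q \in cl_F\{p\}$ holds iff for every $c \in K(A)$, $q \in V(c)$ implies $p \in V(c)$. Unwinding the definition $V(c) = \{r \in Spec(A) \mid c \leq r\}$, this is exactly the condition that for all $c \in K(A)$, $c \leq q$ implies $c \leq p$. The whole proof then reduces to showing that this last condition is equivalent to $q \leq p$, i.e. to $q \in \Lambda(p)$.

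The inclusion $\Lambda(p) \subseteq cl_F\{p\}$ is the routine half: if $q \leq p$ and $c \in K(A)$ satisfies $c \leq q$, then $c \leq q \leq p$, so $c \leq p$; hence the displayed condition holds and $q \in cl_F\{p\}$.

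For the reverse inclusion, the essential ingredient is the algebraicity of the coherent quantale $A$: every element is the join of the compact elements below it, so in particular $q = \bigvee\{c \in K(A) \mid c \leq q\}$. Assuming $q \in cl_F\{p\}$, each compact $c \leq q$ satisfies $c \leq p$ by the equivalence above, whence $q = \bigvee\{c \in K(A) \mid c \leq q\} \leq p$, that is $q \in \Lambda(p)$. This passage from a pointwise statement about compact elements to an inequality between the two arbitrary elements $q$ and $p$ is the only place where a structural hypothesis is genuinely used, and I expect it to be the main (though mild) obstacle; the remainder of the argument is a direct dictionary between the topological notion of closure and the partial order on $A$, so no further computation is required.
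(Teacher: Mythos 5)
Your proof is correct and follows essentially the same route as the paper: both characterize membership in $cl_F\{p\}$ by testing the basic flat-open sets $V(c)$, $c\in K(A)$, reducing the claim to the condition that $c\leq q$ implies $c\leq p$ for all compact $c$, and then use algebraicity ($q=\bigvee\{c\in K(A)\mid c\leq q\}$) to conclude $q\leq p$ in the nontrivial direction. The only slip is a reference: the basis of $Spec_F(A)$ is given in Remark 5.3(ii), not Remark 5.2(ii) (which concerns the lattice case).
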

 \begin{proof}
 According to the definition of the closure $cl_F(p)$ = $cl_F\{p\}$,  the following equalities hold:\\

 $cl_F(p)$ = $\{q\in Spec(A)| \forall c\in K(A) (q\in  V(c) \Rightarrow V(c)\bigcap \{p\} \not= \emptyset \})$

$\hspace{1cm}$ =$\{q\in Spec(A)| \forall c\in K(A) (c\leq q \Rightarrow c\leq p)\}$.\\

In order to prove that $cl_F(p) \subseteq \Lambda(p)$, let us consider $q\in cl_F(p)$ and $c\in K(A)$. Then $c\leq q$ implies $c\leq q$ , therefore

$q=\bigvee \{c \in K(A)| c \leq q\} \leq \bigvee\{c \in K(A)|c \leq p\}=p$.

Conversely, assume that  $q\in \Lambda(p)$, so $q\leq p$.  Thus for any $c\in K(A)$, $c\leq q$  implies $c\leq p$, hence $q\in cl_F(p)$.
  \end{proof}
\begin{propozitie}
If $S\subseteq Spec(A)$  is compact in $Spec_Z(A)$  then its flat closure is $cl_F(S)=\displaystyle \bigcup_{p
\in S} \Lambda(p)$ .
\end{propozitie}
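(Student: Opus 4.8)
The plan is to prove the two inclusions separately, with the compactness of $S$ entering only in the nontrivial one. First I record the description of the flat closure coming from the basis $\{V(c)\mid c\in K(A)\}$ of $Spec_F(A)$ (Remark 5.3(ii)): a point $q$ lies in $cl_F(S)$ iff every basic flat-open neighbourhood of $q$ meets $S$, i.e.
\[
q\in cl_F(S)\iff \forall c\in K(A)\ \big(c\leq q\ \Rightarrow\ V(c)\cap S\neq\emptyset\big).
\]
The inclusion $\bigcup_{p\in S}\Lambda(p)\subseteq cl_F(S)$ then requires no compactness: by monotonicity of the closure operator together with Proposition 5.6, for each $p\in S$ we have $\Lambda(p)=cl_F(\{p\})\subseteq cl_F(S)$, and taking the union over $p\in S$ gives the claim.

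The reverse inclusion $cl_F(S)\subseteq\bigcup_{p\in S}\Lambda(p)$ is the substantive part, and I would argue it by contraposition. Suppose $q\in Spec(A)$ satisfies $q\not\leq p$ for every $p\in S$; the goal is to produce a flat-basic neighbourhood of $q$ disjoint from $S$. Because $A$ is algebraic, $q=\bigvee\{c\in K(A)\mid c\leq q\}$; hence for a fixed $p\in S$, if every compact $c\leq q$ were also below $p$ we would get $q\leq p$, a contradiction. So for each $p\in S$ I can choose $c_p\in K(A)$ with $c_p\leq q$ and $c_p\not\leq p$, i.e. $p\in D(c_p)$. The family $(D(c_p))_{p\in S}$ is thus an open cover of $S$ in $Spec_Z(A)$.

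Now compactness enters: since $S$ is compact in $Spec_Z(A)$, finitely many of these basic opens already cover $S$, say $S\subseteq D(c_{p_1})\cup\cdots\cup D(c_{p_n})$. Put $c=c_{p_1}\vee\cdots\vee c_{p_n}$, which lies in $K(A)$ (the compact elements of an algebraic quantale are closed under finite joins) and satisfies $c\leq q$ since each $c_{p_i}\leq q$. For any $p\in S$ there is an index $i$ with $p\in D(c_{p_i})$, so $c_{p_i}\not\leq p$ and therefore $c\not\leq p$, giving $p\notin V(c)$. Hence $V(c)\cap S=\emptyset$ while $c\in K(A)$ and $c\leq q$, so by the displayed criterion $q\notin cl_F(S)$; this establishes the contrapositive and finishes the proof.

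I expect the main obstacle to be exactly the passage from the pointwise data to a single compact witness: one must combine algebraicity (to extract, for each $p$, a compact element below $q$ but not below $p$) with the compactness of $S$ in the spectral topology (to pass to finitely many such elements) and then take their join inside $K(A)$. The neighbourhood criterion and the easy inclusion are routine, so all the content is concentrated in this finite-subcover step.
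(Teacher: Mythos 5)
Your proof is correct and follows essentially the same route as the paper: the easy inclusion via Proposition 5.6, then for the reverse inclusion choosing for each $p\in S$ a compact $c_p\leq q$ with $c_p\not\leq p$, extracting a finite subcover of $S$ by the $D(c_{p_i})$ using compactness in $Spec_Z(A)$, and taking the join $c$ to get a flat-basic neighbourhood $V(c)$ of $q$ disjoint from $S$. The only differences are cosmetic (contraposition instead of the paper's argument by contradiction, and you make explicit the appeal to algebraicity for the existence of $c_p$, which the paper leaves implicit).
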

\begin{proof}
Applying Proposition 5.6, it follows that for any $p\in S$ we have $\Lambda(p)$ = $cl_F(p)\subseteq cl_F(S)$, so $\displaystyle \bigcup_{p \in S}\Lambda(p) \subseteq cl_F(S)$. Let us prove the converse inclusion $cl_F(S) \subseteq \displaystyle \bigcup_{p \in S}\Lambda(p)$. Assume by absurdum that there exists $q \in cl_F(S)-\displaystyle \bigcup_{p \in S}\Lambda(p)$, so $q \not\leq p$ for all $p \in S$. Then for all $p \in S$ there exists $c_p \in K(A)$ such that $c_p \not\leq p$ and $c_p \leq q$. This means that $S \subseteq \displaystyle \bigcup_{p \in S} D(c_p) $, so $S \subseteq \displaystyle \bigcup_{i=1}^n D(c_{p_i})$ for some $p_1, \ldots, p_n \in S$. Denote
 $c=\displaystyle \bigvee_{i=1}^n c_{p_i}$, so $c \in K(A)$ and $S \subseteq D(c)$. One remarks that $c \leq q$, so $q \in V(c)$. Since $q\in cl_F(S)$ and $V(c)$ is an open neighbourhoud of $q$ in the flat topology, it follows that $S\bigcap V(c)\not=\emptyset$. This contradicts $S \subseteq D(c)$, hence $cl_F(S) \subseteq \displaystyle \bigcup_{p \in S}\Lambda(p)$. We conclude that $cl_F(S)=\displaystyle \bigcup_{p
\in S} \Lambda(p)$.
\end{proof}

An element $a\in A$ is regular if it is a join of complemented elements. A maximal element in the set of proper regular elements is called max- regular. The set $Sp(A)$ of max- regular elements of $A$ is called the Pierce spectrum of the quantale $A$. For any proper regular element $a$  there exists $p\in Sp(A)$ such that $a\leq p$. If $e\in B(A)$ then we denote $U(e)$ = $\{p\in Sp(A)| e\not\leq a\}$. Thus it is easy to prove that the family $(U(e))_{e\in B(A)}$ is a basis of open sets for a topology on $Sp(A)$.

For any $p\in Spec(A)$ we define $s_A(p)$ = $\bigvee \{e\in B(A)| e\leq p \}$; $s_A(p)$ is regular and $s_A(p) \leq p < 1$.
\begin{lema}
$s_A(p)$ is a max - regular element of $A$.
\end{lema}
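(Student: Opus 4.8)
The plan is to unpack the phrase \emph{max-regular}, which hides two requirements: that $s_A(p)$ is a proper regular element, and that it is maximal among such elements. The first is already recorded in the text, since $s_A(p)$ is a join of complemented elements (hence regular) and $s_A(p)\le p<1$ (hence proper). Thus the whole content of the lemma is the maximality claim, which I would phrase as follows: every proper regular element $a$ with $s_A(p)\le a$ must coincide with $s_A(p)$.

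So I would fix such an $a$ and, using the definition of a regular element, write $a=\bigvee_{i\in I}e_i$ with $e_i\in B(A)$ for all $i$. The key step is to prove that $e_i\le p$ for every $i$. Suppose, for contradiction, that $e_j\not\le p$ for some $j$. Since $e_j\in B(A)$ we have $e_je_j^{\perp}=0\le p$, so the primeness of $p$ forces $e_j\le p$ or $e_j^{\perp}\le p$; as $e_j\not\le p$, we obtain $e_j^{\perp}\le p$, and hence $e_j^{\perp}\le s_A(p)$ directly from the definition of $s_A(p)$. Combining this with the hypothesis $s_A(p)\le a$ gives $e_j^{\perp}\le a$, while $e_j\le a$ holds by construction; therefore $1=e_j\vee e_j^{\perp}\le a$ by Lemma 4.1,(1), contradicting the properness of $a$.

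Once the claim is established, each $e_i\le p$ yields $e_i\le s_A(p)$ by definition of $s_A(p)$, whence $a=\bigvee_i e_i\le s_A(p)$; together with the standing inequality $s_A(p)\le a$ this gives $a=s_A(p)$, which is precisely the required maximality. I expect the only delicate point to be the contradiction step: the trick is to feed the complementary pair $e_j,e_j^{\perp}$ into the primeness of $p$, and then to exploit \emph{both} the join representation of the regular element $a$ and the inequality $s_A(p)\le a$ in order to force $a$ up to $1$. Everything else is routine bookkeeping with the definition of $s_A(p)$ and Lemma 4.1,(1).
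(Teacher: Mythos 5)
Your proof is correct and follows essentially the same route as the paper: the key step in both is to feed the complementary pair $e_j,\,e_j^{\perp}$ into the $m$-primeness of $p$ (using $e_je_j^{\perp}=0\le p$ and $e_j\vee e_j^{\perp}=1$) to get $e_j^{\perp}\le s_A(p)$ and hence a join equal to $1$. The only difference is presentational: the paper reduces maximality to the claim that $e\in B(A)$ and $e\not\le p$ imply $s_A(p)\vee e=1$, proving it by contradiction via a (superfluous) max-regular element $q$ above $s_A(p)\vee e$, whereas you verify maximality directly against an arbitrary proper regular $a\ge s_A(p)$ — which is, if anything, the cleaner write-up.
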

\begin{proof}
In order to prove that $s_A(p)\in Sp(A)$ is suffices to have: $e\in B(A)$ and $e\not\leq p$ implies  $s_A(p)\lor e = 1$. Assume by absurdum that there exists  $e\in B(A)$ such that $e\not\leq p$ and $s_A(p)\lor e < 1$. The element $s_A(p)\lor e$  is regular so there exists a max - regular element $q$ such that $s_A(p)\lor e \leq q$. Since $e\not\leq p$ and $p\in Spec(A)$ we have $\neg e \leq p$ , so $1 = e\lor \neg \leq p$. This contradiction shows that $s_A(p)\in Sp(A)$.
\end{proof}

According to the previous lemma, for each $p\in Spec(A)$, $s_A(p)$ is a max - regular element of $A$, so one obtains a function $s_A : Spec(A)\rightarrow  Sp(A)$.
\begin{propozitie}
$Sp(A)$ is a Boolean space and $s_A : Spec(A)\rightarrow  Sp(A)$ is surjective and continuous w.r.t. both flat and spectral topologies on $Spec(A)$.
\end{propozitie}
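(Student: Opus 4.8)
The plan is to reduce all three assertions to Stone duality for the Boolean algebra $B(A)$. The pivotal observation, which I would record first, is that for every $p\in Spec(A)$ and every $e\in B(A)$ one has $e\leq s_A(p)$ if and only if $e\leq p$: if $e\leq p$ then $e$ occurs in the join defining $s_A(p)$, so $e\leq s_A(p)$, and conversely $e\leq s_A(p)\leq p$. From this I would build the correspondence $q\mapsto I_q=\{e\in B(A)\mid e\leq q\}$ on max-regular elements. Since $q<1$ and $B(A)$ is closed under $\lor$ and downward-closure inside itself, $I_q$ is readily seen to be an ideal of $B(A)$.

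The heart of the argument is to prove that $I_q$ is a \emph{maximal} (equivalently prime) ideal of $B(A)$, and this is the step I expect to be the main obstacle. Here max-regularity must be turned into a dichotomy: if $q$ is max-regular and $e\in B(A)$ with $e\not\leq q$, then $q\lor e$ is a join of complemented elements, hence regular, and strictly exceeds $q$; by maximality it cannot be proper, so $q\lor e=1$. Multiplying by $e^{\perp}$ and using that the quantale multiplication distributes over joins, together with $ee^{\perp}=0$ and integrality ($xy\leq x\land y$), gives $e^{\perp}=e^{\perp}(q\lor e)=e^{\perp}q\leq q$. Thus for each $e\in B(A)$ exactly one of $e,e^{\perp}$ lies below $q$, which is precisely primeness of $I_q$. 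Conversely, for a maximal ideal $I$ of $B(A)$ the element $\bigvee I$ is regular, is proper (otherwise compactness of $1$ would force $1\in I$), and is max-regular (any proper regular $r\geq\bigvee I$ has its generating set $S\subseteq B(A)$ contained in $I$, again by the $e/e^{\perp}$ dichotomy, whence $r=\bigvee I$). These assignments are mutually inverse, using compactness of elements of $B(A)\subseteq K(A)$ to recover $I$ from $\bigvee I$.

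I would then observe that this bijection is a homeomorphism onto the Stone space $\mathrm{St}(B(A))$: the basic open $U(e)=\{q\in Sp(A)\mid e\not\leq q\}$ matches the basic clopen $\{I\mid e\notin I\}$, and the dichotomy yields $U(e)^{c}=U(e^{\perp})$, so each $U(e)$ is clopen. Hausdorffness and zero-dimensionality are then immediate, while compactness amounts to: if $\bigcup_{e\in E}U(e)=Sp(A)$ then the ideal of $B(A)$ generated by $E$ is all of $B(A)$ (else it extends to a maximal ideal giving an uncovered point), so $1\leq e_1\lor\cdots\lor e_n$ for finitely many $e_i\in E$, producing a finite subcover. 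Hence $Sp(A)$ is a Boolean space.

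Finally, surjectivity and continuity of $s_A$ follow formally. For surjectivity, given $q\in Sp(A)$ I pick $m\in Max(A)$ with $q\leq m$ (possible since $q<1$ and $1\in K(A)$); then $m\in Max(A)\subseteq Spec(A)$, every $e\leq q$ satisfies $e\leq m$ so $q\leq s_A(m)$, and since $s_A(m)\leq m<1$ is proper regular, maximality of $q$ forces $s_A(m)=q$. For continuity, the first observation gives $s_A^{-1}(U(e))=\{p\in Spec(A)\mid e\not\leq s_A(p)\}=\{p\mid e\not\leq p\}=D(e)$; as $e\in B(A)\subseteq K(A)$ this is open in $Spec_Z(A)$, and because $ee^{\perp}=0\leq p$ with $p$ prime yields $D(e)=V(e^{\perp})$, it is also a basic open of the flat topology. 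Thus $s_A$ is continuous for both the spectral and the flat topologies.
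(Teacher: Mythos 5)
Your proof is correct, and its treatment of surjectivity and continuity coincides with the paper's: the paper also picks a prime $p$ above a given max-regular $q$, notes $q\leq s_A(p)$ and concludes $q=s_A(p)$ by max-regularity, and it also establishes continuity through the same identity $s_A^{-1}(U(e))=D(e)=V(e^{\perp})$ for $e\in B(A)$. Where you genuinely diverge is the proof that $Sp(A)$ is a Boolean space. The paper checks Hausdorffness and the clopen basis $(U(e))_{e\in B(A)}$ directly and then gets compactness for free, in one line, from what was just proved: $Sp(A)=Im(s_A)$ is the continuous image of the compact space $Spec_Z(A)$ (equivalently $Spec_F(A)$), hence compact. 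You instead identify $Sp(A)$ with the Stone space of $B(A)$ via $q\mapsto I_q=\{e\in B(A)\mid e\leq q\}$, proving the dichotomy that exactly one of $e$, $e^{\perp}$ lies below a max-regular $q$, and you prove compactness directly by extending a proper ideal to a maximal one (Boolean prime ideal theorem). Your route is longer and uses a choice principle at that spot where the paper leans on compactness of the spectrum, but it buys more: the homeomorphism $Sp(A)\cong \mathrm{St}(B(A))$ is a stronger structural statement, its topological conclusions are independent of $s_A$ (the paper's compactness argument must come after surjectivity and continuity), and it connects naturally with the isomorphism $B(A)\cong B(L(A))$ of Corollary 4.4. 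If you want to shorten your write-up, the one economy worth importing from the paper is precisely its image-of-a-compact-space argument.
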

\begin{proof}
Assume that $q\in Sp(A)$  then $q\leq p$  for some $p\in Spec(A)$, hence $q\leq s_A(p)$.  Since $q$ and $s_A(p)$ are max - regular we have $q = s_A(p)$, so $s_A$ is surjective. It is easy to see that $Sp(A)$ is a Hausdorff space and $(U(e))_{e\in B(A)}$ is basis of clopen set for $Sp(A)$. For all $e\in B(A)$ we have $s^{-1}_A(U(e))
= D(e) = V(\neg
e)$, hence the functions $s_A : Spec_Z(A)\rightarrow  Sp(A)$ and $s_A : Spec_F(A)\rightarrow  Sp(A)$ are continuous. Therefore the topological space $Sp(A) = Im(s_A)$ is compact. We conclude that $Sp(A)$ is a Boolean space.
\end{proof}

\section{Hyperarchimedean  quantales}

  \hspace{0.5cm}The hyperarchimedean quantales were introduced in \cite{Cheptea1}, where a characterization theorem of these objects was proven. This section contains new algebraic and topological characterizations of the hyperarchimedean quantales.
 Let $A$ be a coherent quantale. By \cite{Cheptea1}, $A$ is said to be hyperarchimedean if for any $c\in K(A)$ there exists an integer $n > 1$ such that $c^n\in B(A)$. Applying Lemma 4.2, it follows that a coherent frame $A$ is hyperarchimedean if and only if $B(A) = K(A)$ (see \cite{MartinezZenk}).
\begin{remarca}
Recall from \cite{Banaschewski} that a frame $L$ is zero - dimensional if any element $a\in A$ is a joint of complemented elements. On the other hand, by Remark 2.1,(i) of \cite{MartinezZenk}, an algebraic frame is zero - dimensional if and only if each compact element is complemented. Thus a coherent frame is hyperarchimedean if and only if it is zero - dimensional.
\end{remarca}

\begin{propozitie}
If $A$ is a coherent quantale the the following are equivalent:
\usecounter{nr}
\begin{list}{(\arabic{nr})}{\usecounter{nr}}
\item $A$ is hyperarchimedean;
\item $L(A)$ is a Boolean algebra;
\item $Spec(A) = Max(A)$;
\item The quantale $[\rho(0))_A$ is hyperarchimedean;
\item $R(A)$ is a hyperarchimedean frame;
\item  $R(A)$ is a zero - dimensional frame;
\end{list}

\end{propozitie}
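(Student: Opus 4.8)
The plan is to prove the three-fold block (1) $\Leftrightarrow$ (2) $\Leftrightarrow$ (3) as a set of equivalences valid for an \emph{arbitrary} coherent quantale, and then to obtain (4), (5), (6) almost for free by applying this block to the auxiliary coherent quantales $[\rho(0))_A$ and $R(A)$, whose prime and maximal spectra are already known to coincide with those of $A$.

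First I would dispatch (1) $\Leftrightarrow$ (2). By definition $A$ is hyperarchimedean iff for every $c \in K(A)$ some power $c^n$ belongs to $B(A)$; by Lemma 4.3 this is precisely the condition $\lambda_A(c) \in B(L(A))$. As $\lambda_A : K(A) \to L(A)$ is surjective (Definition 3.1), every element of $L(A)$ has the form $\lambda_A(c)$, so the condition holds for all $c$ exactly when $L(A) = B(L(A))$, i.e. when $L(A)$ is a Boolean algebra. For (2) $\Leftrightarrow$ (3) I would use the standard fact that a bounded distributive lattice $L$ is Boolean iff $Spec_{Id}(L) = Max_{Id}(L)$, that is, iff every prime ideal is maximal (see \cite{BalbesDwinger}): incomparability of all primes forces $Spec_{Id}(L)$ to be $T_1$, hence a Stone space, and $L$ is then recovered as its Boolean algebra of clopen sets. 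Transporting this equality along the canonical homeomorphism $u$ between $Spec_Z(A)$ and $Spec_{Id,Z}(L(A))$, which by Corollary 3.8 restricts to a bijection $Max(A) \to Max_{Id}(L(A))$, yields $Spec(A) = Max(A)$ iff $Spec_{Id}(L(A)) = Max_{Id}(L(A))$, and hence (2) $\Leftrightarrow$ (3).

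With the block (1)--(3) available, I would treat (4) and (5) uniformly. The interval $[\rho(0))_A$ is a coherent quantale and, by Lemma 2.6, satisfies $Spec([\rho(0))_A) = Spec(A)$ and $Max([\rho(0))_A) = Max(A)$; applying the equivalence (1) $\Leftrightarrow$ (3) to $[\rho(0))_A$ shows that it is hyperarchimedean iff $Spec(A) = Max(A)$, giving (3) $\Leftrightarrow$ (4). Similarly, $R(A)$ is a coherent frame (Lemma 2.5), hence a coherent quantale, and by Lemma 2.3 its prime and maximal spectra equal those of $A$; applying (1) $\Leftrightarrow$ (3) to $R(A)$ shows that $R(A)$ is hyperarchimedean iff $Spec(A) = Max(A)$, giving (3) $\Leftrightarrow$ (5). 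Finally (5) $\Leftrightarrow$ (6) is immediate from Remark 6.1, since a coherent frame is hyperarchimedean exactly when it is zero-dimensional.

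The routine part is the bookkeeping of re-applying (1) $\Leftrightarrow$ (3) to two auxiliary quantales, which only needs the verification that $[\rho(0))_A$ and $R(A)$ satisfy the standing hypotheses (integral, commutative, coherent) — they do. The main obstacle is the lattice-theoretic input in (2) $\Leftrightarrow$ (3): one must ensure that the hypothesis ``every prime ideal is maximal'' genuinely upgrades $L(A)$ to a Boolean algebra rather than merely to a lattice of Krull dimension zero, and it is precisely the passage through the $T_1$/Stone-space description of $Spec_{Id}(L(A))$ that does the real work here.
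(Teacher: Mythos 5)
Your proposal is correct and follows essentially the same route as the paper: (1)$\Leftrightarrow$(2) via Lemma 4.3 and the surjectivity of $\lambda_A$, (2)$\Leftrightarrow$(3) via the Nachbin theorem transported along the homeomorphism of Proposition 3.7, and then (4), (5), (6) by applying the established block to $[\rho(0))_A$ and $R(A)$ using Lemmas 2.6, 2.3 and Remark 6.1, exactly as in the paper's proof. The only difference is expository — you sketch why the Nachbin theorem holds, where the paper simply cites it.
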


\begin{proof}
The equivalence of (1),(2),(3) and (6) was proven in \cite{Cheptea1}, but for sake of completeness we shall present a short proof of the proposition.

$(1)\Leftrightarrow(2)$ In accordance to Lemma 4.3, the following assertions are equivalent:

$\bullet$ $L(A)$ is a Boolean algebra:

$\bullet$ for all $c\in K(A)$, $\lambda_A(c)\in B(L(A))$;

$\bullet$ for all $c\in K(A)$, there exists an integer $n> 0$  such that $c^n\in B(A)$;

$\bullet$  $A$ is hyperarchimedean.

$(2)\Leftrightarrow(3)$ By the Nachbin theorem \cite {BalbesDwinger} and Proposition 3.7, $L(A)$ is a Boolean algebra iff $Spec_{Id}(L(A)) = Max_{Id}(L(A))$ iff $Spec(A) = Max(A)$.

$(3)\Leftrightarrow(4)$ This equivalence follows by using Lemma 2.6.

$(1)\Leftrightarrow(5)$ According to Lemma 6 of  \cite{Cheptea1},  $Spec(A) = Spec(R(A))$ and $Max(A) = Max(R(A))$, therefore this equivalence follows by using that $(1)$ and $(3)$ are equivalent.

$(5)\Leftrightarrow(6)$ By Remark 6.1.
\end{proof}

\begin{lema}
If $L$ is a bounded distributive lattice then the following are equivalent:
\usecounter{nr}
\begin{list}{(\arabic{nr})}{\usecounter{nr}}
\item $L$ is a Boolean algebra;
\item $Spec_{Id}(L)$ = $Max_{Id}(L)$;
\item For all distinct prime ideals $M$ and $N$ of $L$ there exist $x\notin M$ and $y\notin N$ such that $x \land y = 0$.
\end{list}
\end{lema}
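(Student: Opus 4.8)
The plan is to prove the equivalences $(1)\Leftrightarrow(2)\Leftrightarrow(3)$ for a bounded distributive lattice $L$ by treating them as a degenerate, purely lattice-theoretic version of the quantale statements already established. The equivalence $(1)\Leftrightarrow(2)$ is the classical Nachbin theorem (a distributive lattice is Boolean iff every prime ideal is maximal), which I would simply cite from \cite{BalbesDwinger}. Alternatively, one can regard $L$ as its own reticulation (taking $A=Id(L)$, whose reticulation is $L$), and then $(1)\Leftrightarrow(2)$ here is exactly the instance of Proposition 6.2 $(2)\Leftrightarrow(3)$ applied to $Id(L)$. The substantive work is therefore in the implications involving condition $(3)$, the separation property.

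First I would prove $(2)\Rightarrow(3)$. Assume every prime ideal is maximal, and let $M\neq N$ be two primes. Since both are maximal they are incomparable, so choose $x\in N\setminus M$ and $y\in M\setminus N$. The goal is to produce such elements with $x\wedge y=0$; the naive choice need not satisfy this, so the key step is to exploit maximality to sharpen the witnesses. Because $M$ is maximal and $x\notin M$, the ideal generated by $M\cup\{x\}$ is all of $L$, which gives an element $m\in M$ with $m\vee x=1$. Dually, from $y\notin N$ maximal we obtain $n\in N$ with $n\vee y=1$. Replacing $x$ by $x\wedge n$ (still outside $M$, since $M$ is prime and neither factor lies in $M$ — here one must check $n\notin M$, which follows because $n\vee y=1$ forces $n\notin M$ as $y\in M$) and $y$ by $y\wedge m$, a short distributive-lattice computation shows the two modified elements meet to $0$. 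The delicate point to verify carefully is that the adjusted elements remain outside their respective primes while acquiring the orthogonality $x'\wedge y'=0$.

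Next, $(3)\Rightarrow(2)$. Suppose the separation property holds and let $P$ be a prime ideal that is \emph{not} maximal; then $P\subsetneq M$ for some maximal (hence prime) $M$, and these are distinct primes. Applying $(3)$ yields $x\notin P$ and $y\notin M$ with $x\wedge y=0$. Since $y\notin M\supseteq P$ we have $y\notin P$, yet $x\wedge y=0\in P$ with $P$ prime forces $x\in P$ or $y\in P$ — a contradiction. Hence every prime is maximal, giving $(2)$.

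The main obstacle I anticipate is the orthogonalization step in $(2)\Rightarrow(3)$: one must engineer the two witnessing elements so that they simultaneously lie outside the correct primes and have meet zero, and this requires using maximality to extract the complementary relations $m\vee x=1$ and $n\vee y=1$ and then combining them distributively. Everything else is routine: $(1)\Leftrightarrow(2)$ is Nachbin, and $(3)\Rightarrow(2)$ is an immediate application of primeness. I would present the proof in the order $(1)\Leftrightarrow(2)$ by citation, then $(2)\Rightarrow(3)$, then $(3)\Rightarrow(2)$, closing the cycle.
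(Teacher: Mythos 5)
Your reduction $(1)\Leftrightarrow(2)$ via Nachbin and your argument for $(3)\Rightarrow(2)$ are correct, and the latter is essentially the paper's own argument. The genuine gap is the orthogonalization step in your proof of $(2)\Rightarrow(3)$: the claim that $(x\wedge n)\wedge(y\wedge m)=0$ follows from $m\vee x=1$ and $n\vee y=1$ is false, even in a Boolean algebra. Concretely, let $L=\mathcal{P}(\{1,2,3\})$, $M=\{S : 1\notin S\}$, $N=\{S : 2\notin S\}$; here every prime ideal is maximal, so hypothesis $(2)$ holds. Take $x=\{1,3\}\in N\setminus M$, $y=\{2,3\}\in M\setminus N$, and as witnesses of maximality $m=\{2,3\}\in M$ (so $m\vee x=1$) and $n=\{1,3\}\in N$ (so $n\vee y=1$). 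All of your side conditions are satisfied: $n\notin M$, $m\notin N$, $x\wedge n=\{1,3\}\notin M$, $y\wedge m=\{2,3\}\notin N$. Yet $(x\wedge n)\wedge(y\wedge m)=\{3\}\neq 0$. So no ``short distributive-lattice computation'' can close this step: the join relations that maximality supplies are consistent with a nonzero meet, and the construction as described does not produce orthogonal witnesses.

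The repair is the paper's route: prove $(1)\Rightarrow(3)$ rather than $(2)\Rightarrow(3)$, using the Booleanness already secured through Nachbin. Given distinct primes $M\neq N$ (both maximal, hence incomparable), pick $x\in N\setminus M$ and set $y=\neg x$; then $x\wedge y=0$, $x\notin M$, and $y\notin N$ because $x\in N$ and $x\vee y=1\notin N$. In your scheme this amounts to making the specific choices $m=\neg x$ and $n=\neg y$, which arbitrary witnesses of $m\vee x=1$ and $n\vee y=1$ cannot replace; extracting complements from ``every prime ideal is maximal'' is exactly the content of Nachbin's theorem, so it cannot be bypassed by an elementwise manipulation with maximality and primeness alone. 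With this change the cycle $(1)\Leftrightarrow(2)$, $(1)\Rightarrow(3)$, $(3)\Rightarrow(2)$ closes and the argument coincides with the paper's.
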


\begin{proof}
$(1)\Leftrightarrow(2)$ By the Nachbin theorem.

$(1)\Rightarrow(3)$ Assume that $L$ is a Boolean algebra and $M, N$ are distinct prime ideals of $L$. Thus  $M, N$ are distinct maximal ideals of $L$ so there exists an element $x\in M - N$. Denoting $y= \neg x$  one gets $x\notin N$, $y\notin M$  and $x\land y = 0$.

$(3)\Rightarrow(1)$ Assume by absurdum that there exist two ideals $M$ and $N$ of $L$ such that $M\not\subseteq N$ . By hypothesis there exist two elements $x$ and $y$ of $L$ such that $x\notin M$, $y\notin N$  and $x\land y = 0$. Since $x\notin M$ and $x\land y = 0$ implies $y\in M$, it follows a contradiction, so $M = N$. It follows that $Spec_{Id}(L)$ = $Max_{Id}(L)$.
\end{proof}

\begin{propozitie}
Assume that $A$ is a coherent quantale. The following following properties are equivalent:
\usecounter{nr}
\begin{list}{(\arabic{nr})}{\usecounter{nr}}
\item For all distinct $p,q\in Spec(L(A))$ there exist $c,d\in K(A)$ such that $c\not\leq p$, $d\not\leq q$ and $cd = 0$;

\item For all distinct prime ideals $I, J$ of $L(A)$ there exist two elements $x, y$ of $L(A)$ such that $x\notin I$, $y\notin J$ and $x\land y= 0$.

\end{list}
\end{propozitie}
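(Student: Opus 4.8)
The plan is to read condition (1) as a statement about the $m$-prime \emph{elements} $p,q\in Spec(A)$ (the comparisons $c\not\leq p$ only make sense for elements of $A$), and then to transport the two conditions into one another across the reticulation homeomorphism of Proposition~3.7. The relevant facts are that $u\colon p\mapsto p^{\ast}$ and $v\colon P\mapsto P_{\ast}$ are mutually inverse bijections between $Spec(A)$ and $Spec_{Id}(L(A))$, that $\lambda_A\colon K(A)\to L(A)$ is surjective, and the dictionary entries $c\leq p$ iff $\lambda_A(c)\in p^{\ast}$ (Lemma~3.6), $c\leq I_{\ast}$ iff $\lambda_A(c)\in I$ (Lemma~3.5), together with $\lambda_A(cd)=\lambda_A(c)\wedge\lambda_A(d)$ from Definition~3.1(2).

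For $(1)\Rightarrow(2)$, given distinct prime ideals $I,J$ of $L(A)$, I would put $p=I_{\ast}$ and $q=J_{\ast}$; these are $m$-prime by Lemma~3.3(4) and distinct since $v$ is injective. Condition (1) then gives $c,d\in K(A)$ with $c\not\leq p$, $d\not\leq q$, $cd=0$, and setting $x=\lambda_A(c)$, $y=\lambda_A(d)$ yields $x\notin I$, $y\notin J$ by Lemma~3.5, while $x\wedge y=\lambda_A(cd)=\lambda_A(0)=0$ by Lemma~3.2(4). These $x,y$ are the witnesses for (2).

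For $(2)\Rightarrow(1)$, given distinct $p,q\in Spec(A)$, I would put $I=p^{\ast}$, $J=q^{\ast}$, which are distinct prime ideals of $L(A)$ by Lemma~3.3(3) and injectivity of $u$. Condition (2) supplies $x,y\in L(A)$ with $x\notin I$, $y\notin J$, $x\wedge y=0$; by surjectivity of $\lambda_A$ write $x=\lambda_A(c)$, $y=\lambda_A(d)$ with $c,d\in K(A)$, whence $c\not\leq p$ and $d\not\leq q$ by Lemma~3.6. The one step that is not immediate is recovering the \emph{exact} product condition $cd=0$: here $x\wedge y=0$ only gives $\lambda_A(cd)=0$, hence $(cd)^{n}=0$ for some $n\geq1$ by Lemma~3.2(5). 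I would repair this by replacing $c,d$ with $c^{n},d^{n}$, which are again compact because $K(A)$ is closed under multiplication (coherence); then $c^{n}d^{n}=(cd)^{n}=0$, and $\lambda_A(c^{n})=\lambda_A(c)$ (Lemma~3.2(6)) keeps $c^{n}\not\leq p$ and $d^{n}\not\leq q$ via Lemma~3.6, giving the witnesses required by (1).

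I expect this last mismatch between $\lambda_A(cd)=0$ and $cd=0$ to be the only real obstacle; the remainder is a mechanical use of the reticulation dictionary. The repair genuinely uses both standing hypotheses: coherence (to keep $c^{n},d^{n}$ compact) and the $m$-primeness of $p,q$ (so that passing to powers does not disturb the conditions $c^{n}\not\leq p$ and $d^{n}\not\leq q$).
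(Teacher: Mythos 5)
Your proof is correct and takes essentially the same route as the paper's: both directions transport the data across the reticulation dictionary (Proposition~3.7, Lemmas~3.5/3.6, $\lambda_A(cd)=\lambda_A(c)\wedge\lambda_A(d)$), and the key repair in $(2)\Rightarrow(1)$ --- upgrading $\lambda_A(cd)=0$ to an exact zero product by passing to the compact powers $c^n,d^n$ with $(cd)^n=0$, using coherence and the $m$-primeness of $p,q$ --- is exactly the paper's move. If anything your write-up is cleaner, since the paper's proof of this direction gratuitously invokes semiprimeness (which is not a hypothesis of the proposition) before carrying out the same power trick that works without it.
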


\begin{proof}
Assume that $I, J$ are two distinct prime ideals of $L(A)$. In accordance to Proposition 3.7 there exist two  $p,q\in Spec(A)$ such that $I = p^{\ast}$, $J = q^{\ast}$ and $p\not= q$. By hypothesis there exist $c,d\in K(A)$ such that $c\not\leq p$, $d\not\leq q$ and $cd = 0$. Applying Lemmas 3.2 and 3.6 one gets $\lambda_A(c)\land \lambda_A(d)$ = $\lambda_A(cd)$ = $\lambda_A(0)$ = $0$ and $\lambda_A(c)\notin p^{\ast}$, $\lambda_A(d)\notin q^{\ast}$.

Assume now  that $p,q\in Spec(A)$, with $p\neq q$ so  $p^{\ast}$, $q^{\ast}$ are distinct prime ideals of $L(A)$. Thus there exist $c,d\in K(A)$ such that $\lambda_A(c)\notin p^{\ast}$, $\lambda_A(d)\notin q^{\ast}$ and $\lambda_A(cd)$ = $\lambda_A(c)\land \lambda_A(d)$ = 0. Applying Lemma 3.6 one obtains $c\not\leq p$ and $d\not\leq q$. Since $A$ is semiprime,  $\lambda_A(cd) = 0$ implies $cd$ = 0 (by Lemma 3.2,(9)), so there exist an integer $n\geq 1$ such that $c^nd^n = 0$. Let us denote $u = c^n$ and $v = d^n$. Thus $u$ and $v$ are two compact elements of $A$ such that $u\not\leq p$, $v\not\leq q$ (because $p,q$ are $m$ - prime elements) and $uv = 0$.

\end{proof}
\begin{propozitie}
Assume that $A$ is a coherent quantale. The following properties are equivalent:
\usecounter{nr}
\begin{list}{(\arabic{nr})}{\usecounter{nr}}
\item $A$ is hyperarchimedean;
\item $L(A)$ is a Boolean algebra;
\item For all distinct prime ideals $I, J$ of $L(A)$ there exist two elements $x, y$ of $L(A)$ such that $x\notin I$, $y\notin J$ and $x\land y= 0$;
\item For all distinct $p,q\in Spec(A)$ there exist $c,d\in K(A)$ such that $c\not\leq p$, $d\not\leq q$ and $cd = 0$;

\end{list}
\end{propozitie}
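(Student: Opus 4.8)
The plan is to assemble the four-way equivalence from results already in hand, since each adjacent pair of conditions has essentially been treated separately. I would organize the argument as the cycle $(1)\Leftrightarrow(2)\Leftrightarrow(3)\Leftrightarrow(4)$ and verify at each link that the two conditions literally coincide with the hypothesis/conclusion of the cited statement.

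First, $(1)\Leftrightarrow(2)$ is immediate from Proposition 6.2, where the equivalence of ``$A$ is hyperarchimedean'' and ``$L(A)$ is a Boolean algebra'' was already recorded. Next, for $(2)\Leftrightarrow(3)$ I would apply Lemma 6.3 to the bounded distributive lattice $L=L(A)$: condition (1) of that lemma is exactly ``$L(A)$ is a Boolean algebra'', i.e.\ our (2), while condition (3) of that lemma --- for all distinct prime ideals $M,N$ of $L(A)$ there are $x\notin M$, $y\notin N$ with $x\wedge y=0$ --- is verbatim our (3). Hence Lemma 6.3 yields $(2)\Leftrightarrow(3)$ with no further work. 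Finally, $(3)\Leftrightarrow(4)$ is precisely the content of Proposition 6.4: its condition (2) is our (3) and its condition (1) is our (4). Here I would only remark that the ``$Spec(L(A))$'' appearing in Proposition 6.4(1) must be read as $Spec(A)$ (the only reading for which $c\not\leq p$ with $c\in K(A)$ is meaningful), consistent with the homeomorphism $u:Spec(A)\to Spec_{Id}(L(A))$ of Proposition 3.7; with this reading the two conditions are identical.

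The main obstacle is not in the assembly above but is already absorbed into Proposition 6.4, namely the faithful transfer of the ``orthogonality'' data between the quantale $A$ and its reticulation $L(A)$. This rests on Lemma 3.2 (giving $\lambda_A(c)\wedge\lambda_A(d)=\lambda_A(cd)$ and $\lambda_A(c)=0$ iff $c\leq\rho(0)$) together with Lemma 3.6 (giving $c\leq p$ iff $\lambda_A(c)\in p^{\ast}$). Passing from $cd=0$ in $A$ to $\lambda_A(c)\wedge\lambda_A(d)=0$ in $L(A)$ is direct, whereas the reverse passage only yields $c^{n}d^{n}=0$ for some $n\geq 1$ (via Lemma 2.4, or Lemma 3.2,(5)); one then replaces $c,d$ by the powers $c^{n},d^{n}\in K(A)$, which changes neither the relations $c\not\leq p$, $d\not\leq q$ (since $p,q$ are $m$-prime) nor the desired conclusion. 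Chaining the three equivalences then gives the full result.
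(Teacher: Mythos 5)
Your proposal is correct and follows essentially the same route as the paper, which likewise assembles the result from Proposition 6.2 for $(1)\Leftrightarrow(2)$, Lemma 6.3 applied to $L(A)$ for $(2)\Leftrightarrow(3)$, and Proposition 6.4 for the remaining link (the paper labels that last step ``$(3)\Leftrightarrow(1)$'', an evident typo for $(3)\Leftrightarrow(4)$). Your side remarks --- reading ``$Spec(L(A))$'' in Proposition 6.4 as $Spec(A)$, and handling the reverse transfer via powers $c^n, d^n$ using Lemma 3.2 and $m$-primeness --- accurately reflect what the cited proposition's proof actually does.
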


\begin{proof}
$(1)\Leftrightarrow(2)$ By Proposition 6.2.

$(2)\Leftrightarrow(3)$ By Lemma 6.3.

$(3)\Leftrightarrow(1)$ By Proposition 6.4.

\end{proof}

\begin{corolar}
If $A$ is a coherent quantale then it is hyperarchimedean if and only if for all distinct $p,q\in Spec(A)$ there exist $c,d\in K(A)$ such that  $c\not\leq p$, $d\not\leq q$  and $cd = \rho(0)$.
\end{corolar}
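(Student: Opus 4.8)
The plan is to obtain this as the radical (i.e. semiprime) analogue of condition $(4)$ of Proposition 6.5: the only difference from that condition is that the separating product is asked to equal $\rho(0)$ instead of $0$. So I would not start a fresh argument but move the whole question to the reticulation and to the semiprime quotient $[\rho(0))_A$, using that $A$ is hyperarchimedean iff $[\rho(0))_A$ is (Proposition 6.2), that $[\rho(0))_A$ is semiprime with least element $\rho(0)$ and $Spec([\rho(0))_A)=Spec(A)$ (Lemma 2.6), and that $u^A_{\rho(0)}(c)=c\vee\rho(0)$ is compact in $[\rho(0))_A$ for $c\in K(A)$ (Lemma 2.7), every compact element of $[\rho(0))_A$ being of this form.

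For the implication ``separation $\Rightarrow$ hyperarchimedean'' the argument is immediate through the reticulation. Given distinct $p,q\in Spec(A)$ and $c,d\in K(A)$ with $c\not\leq p$, $d\not\leq q$ and $cd=\rho(0)$, one has $cd\leq\rho(0)$, hence $\lambda_A(c)\wedge\lambda_A(d)=\lambda_A(cd)=0$ by Definition 3.1$(2)$ and Lemma 3.2$(8)$. By Lemma 3.6, $\lambda_A(c)\notin p^{\ast}$ and $\lambda_A(d)\notin q^{\ast}$, and since $p\mapsto p^{\ast}$ is a bijection $Spec(A)\to Spec_{Id}(L(A))$ (Proposition 3.7), every pair of distinct prime ideals of $L(A)$ arises in this way. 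This is exactly condition $(3)$ of Proposition 6.5, so $A$ is hyperarchimedean.

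For the converse I would invoke Proposition 6.5$(4)$ to produce, for distinct $p,q$, compacts $c,d\in K(A)$ with $c\not\leq p$, $d\not\leq q$ and $cd=0$; since $0\leq\rho(0)$ this already gives $cd\vee\rho(0)=\rho(0)$, and passing to $[\rho(0))_A$ the elements $c\vee\rho(0)$, $d\vee\rho(0)$ are compact there, still separate $p$ and $q$ (because $\rho(0)\leq p,q$, so $c\vee\rho(0)\not\leq p\Leftrightarrow c\not\leq p$), and their product in $[\rho(0))_A$, namely $(c\vee\rho(0))\cdot_{\rho(0)}(d\vee\rho(0))=cd\vee\rho(0)$, equals the zero $\rho(0)$ of that quantale. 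The step I expect to be delicate is precisely the value of the product: computed honestly in $A$ one only gets $cd\leq\rho(0)$, equivalently $cd\vee\rho(0)=\rho(0)$ or $\rho(cd)=\rho(0)$, and since $K(A)$ is closed under multiplication $cd$ is compact, so a literal equality $cd=\rho(0)$ would force $\rho(0)\in K(A)$. Hence I would read (and prove) the equality in the statement as taken in $[\rho(0))_A$, where $\rho(0)$ is the least and so compact element; with this reading the corollary is just Proposition 6.5 transported to the semiprime quantale $[\rho(0))_A$.
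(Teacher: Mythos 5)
Your proof is correct, and it is essentially the derivation the paper intends: Corollary 6.6 appears immediately after Proposition 6.5 with no proof of its own, the point being exactly the transfer you carry out, either through Lemma 3.2(8) (which converts $cd\leq\rho(0)$ into $\lambda_A(c)\wedge\lambda_A(d)=0$) or, equivalently, through the semiprime quantale $[\rho(0))_A$ of Lemma 2.6 together with Proposition 6.2(4). The delicacy you flag is genuine and your resolution is the right one: since $K(A)$ is closed under multiplication, $cd\in K(A)$, so a literal equality $cd=\rho(0)$ in $A$ would force $\rho(0)\in K(A)$, and in fact the forward implication fails under the literal reading whenever $A$ is hyperarchimedean with $\rho(0)\notin K(A)$ and $Spec(A)$ has at least two points --- for instance $A=Id(R)$ with $R=\bigl(k[x_1,x_2,\dots]/(x_i^2 : i\geq 1)\bigr)\times k$, whose two prime ideals are both maximal but whose nilradical is not finitely generated. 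So the statement must be read, as you do, with $cd=\rho(0)$ meaning $cd\leq\rho(0)$ (equivalently $\rho(cd)=\rho(0)$, equivalently equality of the product computed in $[\rho(0))_A$), and under that reading both of your directions are sound.
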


\begin{lema}
\cite{Tar1}
Assume that $X$ and $Y$ are topological spaces, $X$ is compact and $Y$ is Hausdorff. Any continuous function $f: X \rightarrow Y$ is a closed map. Moreover, if $f$ is bijective, then it is a homeomorphism.
\end{lema}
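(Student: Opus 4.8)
The plan is to reduce the statement to three standard facts of point-set topology and to apply them in sequence: that a closed subset of a compact space is compact, that the continuous image of a compact space is compact, and that a compact subset of a Hausdorff space is closed.

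First I would establish that $f$ is a closed map. I would take an arbitrary closed set $C \subseteq X$. Since $X$ is compact and $C$ is closed in $X$, the subspace $C$ is compact. Because $f$ is continuous, the image $f(C)$ is then a compact subset of $Y$. At this point the Hausdorff hypothesis on $Y$ enters decisively: every compact subset of a Hausdorff space is closed, so $f(C)$ is closed in $Y$. As $C$ was arbitrary, this shows that $f$ sends closed sets to closed sets, i.e.\ $f$ is closed.

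Next, under the additional assumption that $f$ is bijective, I would upgrade this to a homeomorphism. Since $f$ is already a continuous bijection, it suffices to check that the inverse map $g = f^{-1} : Y \to X$ is continuous. For any closed set $C \subseteq X$ one has $g^{-1}(C) = f(C)$, which is closed in $Y$ by the first part; hence the preimage under $g$ of every closed set is closed, so $g$ is continuous. Thus $f$ is a continuous bijection with continuous inverse, that is, a homeomorphism.

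I do not expect any genuine obstacle here, since this is a classical result; the only point worth stressing is the precise role of each hypothesis. Compactness of $X$ is what forces closed subsets to be compact, while the Hausdorff property of $Y$ is exactly the ingredient that turns ``compact image'' into ``closed image''. Each of these hypotheses is indispensable, so the argument is as economical as possible and requires no further machinery.
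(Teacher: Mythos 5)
Your proof is correct and complete: it is the classical three-step argument (closed in compact is compact, continuous image of compact is compact, compact in Hausdorff is closed), followed by the standard observation that for a bijection the closedness of $f$ gives continuity of $f^{-1}$ since $(f^{-1})^{-1}(C)=f(C)$. The paper itself offers no proof of this lemma — it simply cites it from the reference [Tar1] — so your argument supplies exactly the standard justification that the cited source would give, and nothing more needs to be said.
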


\begin{teorema}
If $A$ is a coherent quantale then the following are equivalent:
\usecounter{nr}
\begin{list}{(\arabic{nr})}{\usecounter{nr}}
\item $A$ is hyperarchimedean;
\item For all distinct $p,q\in Spec(A)$ there exist $c,d\in K(A)$ such that  $c\not\leq p$, $d\not\leq q$  and $cd = 0$;
\item $Spec_Z(A)$ is Hausdorff;
\item $Spec_Z(A)$ is a Boolean space;
\item $\mathcal{Z}$ = $\mathcal{P}$;
\item $Spec_F(A)$ is Hausdorff;
\item $Spec_F(A)$ is Boolean space;
\item $\mathcal{Z}$ = $\mathcal{F}$.
\end{list}

\end{teorema}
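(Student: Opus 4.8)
The equivalence $(1)\Leftrightarrow(2)$ is already Proposition 6.5, so the plan is to thread conditions $(3)$--$(8)$ through this pair. The two facts I would lean on throughout are the explicit point closures in the two topologies: $cl_Z(\{p\})=V(p)=\{q\in Spec(A)\mid p\leq q\}$, recorded in Section 5, and $cl_F(\{p\})=\Lambda(p)=\{q\in Spec(A)\mid q\leq p\}$, which is Proposition 5.6. The guiding principle is that each separation hypothesis on $Spec(A)$ collapses the specialization order to the trivial one, i.e. forces $Spec(A)=Max(A)$, and this is equivalent to hyperarchimedeanness by Proposition 6.2.

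For the spectral conditions I would argue $(2)\Rightarrow(3)$ directly: if $c\not\leq p$, $d\not\leq q$ and $cd=0$, then $p\in D(c)$, $q\in D(d)$ and $D(c)\cap D(d)=D(cd)=D(0)=\emptyset$, so these are disjoint open neighbourhoods. Conversely $(3)\Rightarrow(1)$: Hausdorff implies $T_1$, hence $V(p)=cl_Z(\{p\})=\{p\}$ for every $p$; since $1\in K(A)$ puts $p$ below some $m\in Max(A)\subseteq Spec(A)$ and $m\in V(p)=\{p\}$ gives $p=m$, we get $Spec(A)=Max(A)$, whence $A$ is hyperarchimedean. The implication $(1)\Rightarrow(4)$ I would obtain through the reticulation: by Proposition 6.2, $L(A)$ is Boolean, so $Spec_{Id,Z}(L(A))$ is a Stone space, and the homeomorphism of Proposition 3.7 carries this structure back to $Spec_Z(A)$. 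Finally $(4)\Leftrightarrow(5)$ is formal: $(5)\Rightarrow(4)$ because $Spec_P(A)$ is Boolean by Lemma 5.1, and $(4)\Rightarrow(5)$ because in a Boolean space each compact open $D(c)$ is clopen, so $V(c)=Spec(A)\setminus D(c)\in\mathcal{Z}$ and therefore every patch-basic open $D(c)\cap V(d)$ (Remark 5.3) lies in $\mathcal{Z}$, giving $\mathcal{P}\subseteq\mathcal{Z}$; together with $\mathcal{Z}\subseteq\mathcal{P}$ (Remark 5.4) this yields $\mathcal{Z}=\mathcal{P}$.

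The flat conditions run in parallel, with $cl_F$ replacing $cl_Z$. For $(6)\Rightarrow(1)$: Hausdorffness gives $T_1$, so $\Lambda(p)=cl_F(\{p\})=\{p\}$, i.e. every prime is minimal; but then $p\leq q$ with $p,q$ prime forces $p=q$, the order is trivial, and again $Spec(A)=Max(A)$. Similarly $(8)\Rightarrow(1)$: if $\mathcal{Z}=\mathcal{F}$ then the two topologies have the same closures, so $V(p)=cl_Z(\{p\})=cl_F(\{p\})=\Lambda(p)$ for every $p$, meaning $p\leq q\Leftrightarrow q\leq p$; the order is trivial and $Spec(A)=Max(A)$. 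The implication $(7)\Rightarrow(6)$ is trivial since Boolean spaces are Hausdorff, whereas $(1)\Rightarrow(7)$ I would deduce from the computation below: once $\mathcal{Z}=\mathcal{F}$ has been established, $(1)\Rightarrow(4)$ exhibits $Spec_F(A)=Spec_Z(A)$ as a Boolean space.

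The one genuinely computational step --- and the main obstacle --- is $(1)\Rightarrow(8)$, i.e. showing that the bases $\{D(c)\}_{c\in K(A)}$ of $\mathcal{Z}$ and $\{V(c)\}_{c\in K(A)}$ of $\mathcal{F}$ generate the same topology. Here I would use hyperarchimedeanness to replace each $c\in K(A)$ by a power $e=c^{n}\in B(A)$, noting $D(c)=D(e)$ and $V(c)=V(e)$ because both sets depend only on which primes contain $c$ and $c\leq p\Leftrightarrow c^{n}\leq p$ for prime $p$. For a complemented $e$, the relations $e\lor e^{\perp}=1$ and $ee^{\perp}=0$ force, for each prime $p<1$, exactly one of $e\leq p$, $e^{\perp}\leq p$; hence $D(e)=V(e^{\perp})$ and $V(e)=D(e^{\perp})$, with $e^{\perp}\in B(A)\subseteq K(A)$ by Lemma 4.2. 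Thus every spectral-basic open is flat-basic and conversely, so $\mathcal{Z}=\mathcal{F}$, and all eight conditions close up. The only delicate point worth flagging is the recurring passage ``Hausdorff $\Rightarrow T_1\Rightarrow$ trivial order'': it is legitimate precisely because, for these spectral spaces, $T_1$ already pins the order down completely, so no separation axiom stronger than $T_1$ is ever secretly needed.
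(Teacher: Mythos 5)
Your proposal is correct, and it takes a genuinely different route from the paper's. The paper's engine is Lemma 6.7 (a continuous bijection from a compact space onto a Hausdorff space is a homeomorphism): it proves $(3)\Rightarrow(5)$ and $(6)\Rightarrow(8)$ by applying that lemma to the identity maps $Spec_P(A)\rightarrow Spec_Z(A)$ and $Spec_P(A)\rightarrow Spec_F(A)$, and it returns to $(1)$ through closure comparisons such as $\mathcal{F}\subseteq\mathcal{P}=\mathcal{Z}$, hence $V(p)\subseteq\Lambda(p)$, hence $V(p)=\{p\}$. You never use Lemma 6.7: your implications $(3)\Rightarrow(1)$, $(6)\Rightarrow(1)$, $(8)\Rightarrow(1)$ are direct $T_1$/specialization-order arguments, and, most notably, your $(1)\Rightarrow(8)$ is an explicit computation with complemented elements: replace $c$ by $e=c^n\in B(A)$, note $D(c)=D(e)$ and $V(c)=V(e)$, then use $e\lor e^{\perp}=1$ and $ee^{\perp}=0$ to get $D(e)=V(e^{\perp})$ and $V(e)=D(e^{\perp})$ with $e^{\perp}\in B(A)\subseteq K(A)$ (Lemma 4.1,(4) and Lemma 4.2), so the two bases literally coincide. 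The paper obtains $\mathcal{Z}=\mathcal{F}$ only indirectly, as the composite $(1)\Rightarrow(6)\Rightarrow(8)$ with two uses of the compactness lemma. Your version is more elementary and more informative --- it exhibits a common clopen basis and shows exactly where hyperarchimedeanness enters --- while the paper's is softer and shorter once the general lemma is available. Two minor points to tighten: (i) as written, your spectral-side cycle $(1)\Leftrightarrow(2)\Leftrightarrow(3)$, $(1)\Rightarrow(4)\Leftrightarrow(5)$ closes only through the implication $(4)\Rightarrow(3)$ (Boolean implies Hausdorff), which you state explicitly for the flat side as $(7)\Rightarrow(6)$ but leave implicit here; (ii) in $(4)\Rightarrow(5)$ you use that each $D(c)$ is compact in $Spec_Z(A)$ --- true, since $D(c)=u^{-1}(D_{Id}(\lambda_A(c)))$ and $u$ is a homeomorphism onto the Stone space $Spec_{Id,Z}(L(A))$, but this fact is nowhere recorded in the paper, so it deserves a sentence of justification.
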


\begin{proof}

The equivalence $(1)\Leftrightarrow(2)$ follows from Proposition 6.5 and the equivalences $(3)\Leftrightarrow(4)$, $(6)\Leftrightarrow(7)$ are well - known from the general topology.

$(2)\Rightarrow(3)$
Let $p,q$ be two distinct elements of $Spec(A))$. Then there exist $c,d\in K(A)$ such that  $c\not\leq p$, $d\not\leq q$  and $cd = 0$. It follows that $p\in D(c)$, $q\in D(d)$ and $D(c) \bigcap D(d)$ = $D(cd)$ = $\emptyset$. Thus $Spec_Z(A)$ is a Hausdorff space.

$(3)\Rightarrow(5)$ Assume that $Spec_Z(A)$ is a Hausdorff space. Recall from Remark 5.4,(ii) that the identity function $id:Spec_P(A)\rightarrow Spec_Z(A)$ is continuous. Since $Spec_P(A)$ is compact (cf. Lemma 5.1) and  $Spec_Z(A)$ is Hausdorff, by Lemma 6.7 it follows that the map $id:Spec_P(A)\rightarrow Spec_Z(A)$ is a homeomorphism, hence $\mathcal{Z}$ = $\mathcal{P}$.

$(5)\Rightarrow(1)$ Assume that $\mathcal{Z}$ = $\mathcal{P}$ and $p\in Spec(A)$. We want to show that $p\in Max(A)$. According to Remark 5.4(i) and the hypothesis $(5)$ we have $\mathcal{F}\subseteq \mathcal{P}$ = $\mathcal{Z}$. From  $\mathcal{F}\subseteq \mathcal{P}$ it follows that any closed set in $Spec_F(A)$ is closed in $Spec_Z(A)$, so $cl_Z(\{p\})\subseteq cl_P(\{p\})$. Thus by applying Proposition 5.6 one gets $V(p) \subseteq  \Lambda(p)$. Thus $V(p) = \{p\}$, so $p\in Max(A)$. It follows that $Spec(A) = Max(A)$, hence, by Proposition 6.2 we conclude that $A$ is hyperarchimedean.

$(1)\Rightarrow(6)$ Assume that $A$ is hyperarchimedean and $p,q$ are distinct elements of $Spec(A)$, hence, by Proposition 6.2 we have $p,q\in Max(A)$, therefore $p\lor q = 1$. Since $1\in K(A)$ there exist $p,q\in K(A)$ such that $c\leq p$, $d\leq q$ and $c\lor d = 1$. Then $p\in V(c)$, $q\in V(d)$ and $V(c), V(d)$ are open sets of $Spec_F(A)$ such that $V(c)\bigcap V(d)$ = $V(c\lor d)$ = $V(1)$ = $\emptyset$. It follows that $Spec_F(A)$ is a Hausdorff space.

$(6)\Rightarrow(1)$ Assume that $Spec_F(A)$ is a Hausdorff space. Let $p\in Spec(A)$ and $q\in Max(A)$ such that $p\leq q$. Since $Spec_F(A)$ is Hausdorff we have $cl_F(\{q\}) = \{q\}$. According to Proposition 5.6 we have $p\in\Lambda(q)$ = $cl_F(\{q\})$ = $\{q\}$, hence $p = q$. Thus $Spec(A) = Max(A)$, so $A$ is hyperarchimedean.

$(6)\Rightarrow(8)$ Assume that $Spec_F(A)$ is a Hausdorff space. The identity function $id: Spec_P(A) \rightarrow Spec_F(A)$ is continuous, $Spec_F(A)$ is compact and $Spec_F(A)$ is Hausdorff. Hence by Lemma 6.7 it follows that $id: Spec_P(A) \rightarrow Spec_F(A)$ is a homeomorphism, so $\mathcal{F}$ = $\mathcal{P}$. According to the previous proof of the implications $(6)\Rightarrow(1)$, and $(1)\Rightarrow(5)$ we have $\mathcal{Z}$ = $\mathcal{P}$, therefore $\mathcal{Z}$ = $\mathcal{F}$.

$(3)\Rightarrow(5)$ Assume that $\mathcal{Z}$ = $\mathcal{F}$. If $p\in Spec(A)$ then $\Lambda(p)$ = $cl_F(\{p\})$ = $\{p\}$ = $V(p)$, hence $p\in Max(A)$. Thus $Spec(A) = Max(A)$, so the quantale $A$ is hyperarchimedean.
\end{proof}
\begin{remarca}
If we apply the previous theorem to the quantale $Id(R)$ of the ideals of a commutative ring $R$ then we obtain the main part of Theorem 3.3 from \cite{Aghajani}.
\end{remarca}

\section{Flat topology on the maximal spectrum}

 \hspace{0.5cm} In this section we shall study the flat topology on the maximal spectrum of coherent quantales in order to obtain new results on the normal and $B$ - normal quantales.

We fix a coherent quantale $A$. Recall that $Max_F(A)$ is the maximal spectrum $Max(A)$ of $A$ endowed with the restriction of the flat topology of $Spec(A)$.

\begin{lema}
If $c\in K(A)$ then $V(c)\bigcap Max(A)$ is a clopen set of $Max_F(A)$.
\end{lema}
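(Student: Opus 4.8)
The plan is to verify openness and closedness separately. Openness is immediate: by Remark 5.3(ii) the family $\{V(d)\mid d\in K(A)\}$ is a basis for $Spec_F(A)$, so the traces $\{V(d)\cap Max(A)\mid d\in K(A)\}$ form a basis for the subspace $Max_F(A)$; in particular $V(c)\cap Max(A)$ is open. The whole content therefore lies in showing that $V(c)\cap Max(A)$ is also closed in $Max_F(A)$, equivalently that its complement $D(c)\cap Max(A)$ is open in $Max_F(A)$.

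First I would fix $m\in Max(A)$ with $c\not\leq m$ and produce a basic flat-open neighbourhood of $m$ contained in $D(c)\cap Max(A)$. Since $m$ is maximal and $c\not\leq m$, we have $m< m\lor c$, and maximality of $m$ forces $m\lor c=1$. Because $A$ is algebraic, $m=\bigvee\{e\in K(A)\mid e\leq m\}$, so $1=c\lor\bigvee\{e\in K(A)\mid e\leq m\}$ is a join of compact elements; as $1\in K(A)$ is compact, a finite subfamily already joins to $1$, which yields compact elements $e_1,\dots,e_k\leq m$ with $c\lor e_1\lor\cdots\lor e_k=1$ (the element $c$ must occur among the chosen joinands, for otherwise $1\leq m$, contradicting $m<1$). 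Putting $d=e_1\lor\cdots\lor e_k$, we obtain $d\in K(A)$, $d\leq m$ and $c\lor d=1$.

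It then remains to check that $V(d)\cap Max(A)$ is the desired neighbourhood. Clearly $m\in V(d)\cap Max(A)$, since $d\leq m$. Moreover, if $n\in V(d)\cap Max(A)$, i.e. $n\in Max(A)$ and $d\leq n$, then $c\not\leq n$: indeed $c\leq n$ together with $d\leq n$ would give $1=c\lor d\leq n$, contradicting $n<1$. Hence $V(d)\cap Max(A)\subseteq D(c)\cap Max(A)$. This shows that every point of $D(c)\cap Max(A)$ has a flat-open neighbourhood inside it, so $D(c)\cap Max(A)$ is open, $V(c)\cap Max(A)$ is closed, and therefore clopen.

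The main obstacle is the middle step: converting the bare equality $c\lor m=1$ into a \emph{compact} witness $d\leq m$ with $c\lor d=1$. This is precisely where coherence enters — algebraicity lets us replace $m$ by the compact elements beneath it, and compactness of $1$ lets us pass to a finite join — and it is exactly this finiteness that makes $V(c)\cap Max(A)$ closed rather than merely open. The degenerate case $c=1$, where $V(c)\cap Max(A)=\emptyset$, is harmless and is already absorbed by the same argument (with the empty join $d=0$).
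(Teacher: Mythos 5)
Your proof is correct and follows essentially the same route as the paper: openness from the basis of flat-open sets $V(d)$, and closedness by producing, for each maximal $m$ with $c\not\leq m$, a compact $d\leq m$ with $c\lor d=1$ so that $V(d)\cap Max(A)$ is a neighbourhood of $m$ inside $D(c)\cap Max(A)$. The only difference is that you spell out the compactness argument (algebraicity of $A$ plus $1\in K(A)$) that the paper leaves implicit when it asserts the existence of $d$.
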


\begin{proof}
 Assume that $c$ is a compact element of $A$. According to Remark 5.3,(ii), $V(c)\bigcap Max(A)$ is an open set of $Max_F(A)$. It remains to prove that the set $D(c)\bigcap Max(A)$ is open in $Max_F(A)$. Let $p\in D(c)\bigcap Max(A)$, hence $c\not\leq p$ and $p\in Max(A)$. If $c\lor p < 1$ then $p < c\lor p\leq q$ for some $q\in Max(A)$, contradicting the maximality of $p$. Thus $c\lor p = 1$, hence there exists $d\in K(A)$ such that $d\leq p$ and $c\lor d = 1$. One obtains $V(c)\bigcap V(d)$ = $V(c\lor d)$ = $V(1)$ = $\emptyset$, hence $V(d)\subset D(c)$. It follows that $p\in V(d)\bigcap Max(A) \subseteq D(c)\bigcap Max(A)$, so $D(c)\bigcap Max(A)$ is an open subset of $Max_F(A)$.
\end{proof}

\begin{propozitie}
The topological space $Max_F(A)$ is Hausdorff and zero - dimensional.
\end{propozitie}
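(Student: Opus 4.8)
The plan is to derive both properties from Lemma 7.1 together with the algebraicity of $A$ and the maximality argument already used in the proof of that lemma. For zero-dimensionality, first I would observe that by Remark 5.3,(ii) the family $\{V(c)\bigcap Max(A) \mid c\in K(A)\}$ is a basis of open sets for $Max_F(A)$, being the trace on $Max(A)$ of the flat basis $\{V(c)\mid c\in K(A)\}$ of $Spec_F(A)$. By Lemma 7.1 every member of this basis is clopen in $Max_F(A)$. Hence $Max_F(A)$ admits a basis of clopen sets, which is precisely the definition of a zero-dimensional space.

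For the Hausdorff property I would fix two distinct elements $p,q\in Max(A)$ and separate them by disjoint flat-open sets. First I note that two distinct maximal elements are incomparable: if $p\leq q$ with $p\neq q$ then $p<q<1$ would contradict the maximality of $p$, so in particular $p\not\leq q$. Since $A$ is coherent, hence algebraic, we have $p=\bigvee\{c\in K(A)\mid c\leq p\}$, so $p\not\leq q$ forces the existence of some $c\in K(A)$ with $c\leq p$ and $c\not\leq q$.

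Now I would reproduce the maximality step from Lemma 7.1: since $c\not\leq q$ and $q\in Max(A)$, the element $c\lor q$ strictly exceeds $q$, so $c\lor q=1$ (otherwise $c\lor q$ would lie below a maximal element, contradicting the maximality of $q$). Because $1\in K(A)$ and the join is directed over $K(A)$, there exists $d\in K(A)$ with $d\leq q$ and $c\lor d=1$. Then $p\in V(c)\bigcap Max(A)$ and $q\in V(d)\bigcap Max(A)$, and these two flat-open neighbourhoods are disjoint since $V(c)\bigcap V(d)=V(c\lor d)=V(1)=\emptyset$. This exhibits the required separation, so $Max_F(A)$ is Hausdorff.

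I do not expect a serious obstacle here, as the essential technical content is already packaged in Lemma 7.1 and the incomparability of distinct maximal elements. The only point requiring care is the passage from $p\not\leq q$ to a witnessing compact element $c\leq p$ with $c\not\leq q$, which rests on the algebraic representation of $p$ as a join of compacts; and the derivation $c\lor q=1$, which reuses verbatim the maximality argument of Lemma 7.1. Once these are in place, the disjointness $V(c)\bigcap V(d)=\emptyset$ is immediate from the basic identity $V(c)\bigcap V(d)=V(c\lor d)$.
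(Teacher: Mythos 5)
Your proof is correct and takes essentially the same approach as the paper: zero-dimensionality from Lemma 7.1 together with the basis of Remark 5.3,(ii), and Hausdorffness by producing compacts $c\leq p$, $d\leq q$ with $c\lor d=1$, so that $V(c)\bigcap V(d)=V(c\lor d)=\emptyset$. The only immaterial difference is that the paper extracts $c,d$ by splitting $p\lor q=1$ directly (using compactness of $1$), whereas you first pick a compact $c\leq p$ with $c\not\leq q$ and then rerun the maximality step of Lemma 7.1 to get $d$.
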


\begin{proof}
Let $p,q$ be two distinct maximal elements of $A$, hence $p\lor q = 1$. Thus there exist $c,d\in K(A)$ such that $c\leq p$, $d\leq q$ and $c\lor d = 1$, therefore $p\in V(c)$, $d \in V(d)$ and $V(c)\bigcap V(d)$ = $V(c\lor d)$ = $V(1)$ = $\emptyset$. It results that $Max_F(A)$ is a Hausdorff space. In accordance to Lemma 7.1, the family $(V(c)\bigcap Max(A))_{c\in K(A)}$ is a basis of clopen sets for $Max_F(A)$, so this topological space is zero - dimensional.
\end{proof}

Following \cite{Cheptea1} we shall denote $r(A)=\bigwedge Max(A)$ . One remarks that $r(A)$ extends the notion of Jacobson radical of a commutative ring. It is obvious that $\rho(0)\leq r(A)$.

\begin{teorema}
$Max_F(A)$ is compact if and only if $[r(A))_A$ is a hyperarchimedean quantale.
\end{teorema}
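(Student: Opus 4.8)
Write $B = [r(A))_A$ for the interval quantale, which is coherent by the construction preceding Lemma 2.6. The plan rests on two identifications I would establish first: that the $m$-prime elements of $B$ are exactly the $m$-prime elements $p$ of $A$ with $r(A) \le p$ (so $Spec(B) = V(r(A))$), and that $Max(B) = Max(A)$, the latter because $r(A) = \bigwedge Max(A)$ lies below every maximal element. Both follow by a routine check on the multiplication $x \cdot_{r(A)} y = xy \lor r(A)$: when $r(A) \le p$, the inequality $xy \lor r(A) \le p$ is equivalent to $xy \le p$, so primeness in $B$ and in $A$ coincide on the interval. By Proposition 6.2 applied to $B$, the quantale $B$ is hyperarchimedean if and only if $Spec(B) = Max(B)$, i.e. iff $V(r(A)) = Max(A)$; this is the reformulation I would use throughout.

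For the implication ``$B$ hyperarchimedean $\Rightarrow Max_F(A)$ compact'', I would show that $Max_F(A)$ is homeomorphic to $Spec_F(B)$. The integral morphism $u = u_{r(A)}^A : A \to B$ of Lemma 2.7 sends $K(A)$ into $K(B)$, in fact onto (a standard property of the interval quantale), and induces on spectra the inclusion of $V(r(A))$ into $Spec(A)$. For $c \in K(A)$ and $q \in V(r(A))$ one has $u(c) = c \lor r(A) \le q$ iff $c \le q$, so the basic flat-open $V_B(u(c))$ of $Spec_F(B)$ equals $V(c) \cap V(r(A))$. By Remark 5.3,(ii) these are exactly the basic flat-open sets of the subspace $V(r(A))$ of $Spec_F(A)$, so $Spec_F(B)$ is that subspace. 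When $B$ is hyperarchimedean, $V(r(A)) = Max(A)$, whence $Max_F(A) = Spec_F(B)$; but $Spec_F(B)$ is a spectral space by Lemma 5.1, hence compact. Thus $Max_F(A)$ is compact.

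For the converse I would argue by contraposition. If $B$ is not hyperarchimedean, then by the reformulation above there is $p \in V(r(A)) \setminus Max(A)$: an $m$-prime $p$ with $r(A) \le p$ that is not maximal. For every $m \in Max(A)$ one has $m \not\le p$ (otherwise $m \le p < 1$ would force $m = p$, contradicting $p \notin Max(A)$), so since $m = \bigvee\{c \in K(A) \mid c \le m\}$ there is $c_m \in K(A)$ with $c_m \le m$ and $c_m \not\le p$. The family $\{V(c_m) \cap Max(A) \mid m \in Max(A)\}$ is then a flat-open cover of $Max(A)$, as $m \in V(c_m)$. Were $Max_F(A)$ compact, a finite subcover $m_1, \dots, m_k$ would give, using $V(c) \cup V(d) = V(cd)$ (valid because the points are $m$-prime), that $Max(A) \subseteq V(c)$ with $c = c_{m_1} \cdots c_{m_k} \in K(A)$; hence $c \le \bigwedge Max(A) = r(A) \le p$. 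Since $p$ is $m$-prime and $c_{m_1} \cdots c_{m_k} \le p$, some $c_{m_j} \le p$, contradicting $c_{m_j} \not\le p$. So no finite subcover exists and $Max_F(A)$ is not compact.

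The genuinely delicate step is the first one: pinning down $Spec(B)$ and $Max(B)$ and verifying that the flat topology of $Spec_F(B)$ is exactly the subspace flat topology that $Spec_F(A)$ induces on $V(r(A))$ (in particular that $u$ carries $K(A)$ onto $K(B)$). Once this interval-quantale bookkeeping is in place, the forward direction is immediate from compactness of spectral spaces, and the converse is the short covering argument above, whose only real inputs are the multiplicativity $V(c) \cup V(d) = V(cd)$ of the flat basis and the inequality $c \le r(A) \le p$.
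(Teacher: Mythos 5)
Your proposal is correct, but the forward implication follows a genuinely different route from the paper's. For ``$Max_F(A)$ compact $\Rightarrow [r(A))_A$ hyperarchimedean'', the paper argues directly through the reticulation: it uses compactness to extract, for each compact $c$ with $\lambda_A(c)\notin (r(A))^{\ast}$, a finite subcover producing a complement of $\lambda_A(c)/(r(A))^{\ast}$, concludes that $L(A)/(r(A))^{\ast}$ is a Boolean algebra, and then invokes the isomorphism $L([r(A))_A)\cong L(A)/(r(A))^{\ast}$ (Proposition 6 of the cited paper of Cheptea--Georgescu) together with Proposition 6.2, $(2)\Rightarrow(1)$. You instead argue by contraposition using the other leg of Proposition 6.2, namely hyperarchimedean iff $Spec=Max$: a non-maximal $m$-prime $p\geq r(A)$ yields the flat-open cover $\{V(c_m)\cap Max(A)\}_{m\in Max(A)}$ with $c_m\leq m$, $c_m\not\leq p$, and any finite subcover would force $c_{m_1}\cdots c_{m_k}\leq r(A)\leq p$, hence some $c_{m_j}\leq p$ by primality --- a contradiction. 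This avoids the reticulation and the external quotient-lattice isomorphism entirely, at the price of the interval-quantale bookkeeping ($Spec([r(A))_A)=V(r(A))$, $Max([r(A))_A)=Max(A)$, $K([r(A))_A)=u_{r(A)}^A(K(A))$, and the subspace identification of the flat topologies), which you correctly identify as the delicate step and verify; the paper needs a portion of the same bookkeeping for its converse direction but dismisses it as ``easy to see''. The backward implication is essentially the paper's: hyperarchimedean gives $Spec=Max$ for the interval quantale, whose flat spectrum is spectral, hence compact. In sum, your proof is more self-contained and elementary, while the paper's yields the additional structural fact that $L(A)/(r(A))^{\ast}$ is Boolean, which fits the reticulation machinery it uses throughout.
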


\begin{proof}

($\Rightarrow$) Assume that $Max_F(A)$ is compact. First we shall prove that $L(A) / (r(A))^\ast$ is a Boolean algebra. Let $c$ be a compact element of $A$ such that $\lambda_A(c) /(r(A))^\ast \not= 0 /(r(A))^\ast$, so $\lambda_A(c) \not\in (r(A))^\ast$. By Lemma 3.5 we have $c \not\leq r(A)$, so there exists $m_c \in Max(A)$ such that $c \not\leq m_c$. For any $m \in Max(A)$ we have $c \leq m$ or $c \not\leq m$; if $c \not\leq m$ then there exists $d_m \in K(A)$ such that $d_m \leq m$ and $c \lor d_m=1$. Since $c \not\leq m_c$, the family $\{m \in Max(A)| c\not\leq m\}$ is non-empty. One remarks that  $Max(A)=\{m \in Max(A)|c \leq m\} \cup \{m \in Max(A)|c\not\leq m\} \subseteq V(c) \cup \displaystyle \bigcup_{c \not\leq m}V(d_m)$.

By hypothesis $Max_F(A)$ is compact so there exist $m_1, \ldots, m_n \in Max(A)$ such that $c \not\leq m_i$ for $i=1, \ldots, n$ and $Max(A) \subseteq V(c) \cup \displaystyle \bigcup_{i = 1}^n V(d_{m_i})$. Let us denote $d_i=d_{m_i}$ for $i=1, \ldots, n$ and $d=d_1 d_2 \ldots d_m$. Therefore $Max(A) \subseteq V(c) \cup V(d)=V(c)$, so $cd \leq r(A)$. By Lemma 3.5, $cd \leq r(A)$ implies $\lambda_A(cd) \in (r(A))^\ast$.

According to Lemma 2.1 (i), from $c \lor d_i=1$, $i=1, \ldots, n$ one gets $c \lor d=1$. Since $\lambda_A(c) \lor \lambda_A(d)=\lambda_A(c \lor d)=
\lambda_A(1)=1$ and $\lambda_A(c) \land \lambda_A(d)=\lambda_A(cd)$, the following equalities hold:

$\lambda_A(c) / (r(A))^\ast \lor \lambda_A(d) /(r(A))^\ast=1 /(r(A))^\ast$;

$\lambda_A(c) / (r(A))^\ast \land \lambda_A(d) /(r(A))^\ast=\lambda_A(cd)/(r(A))^\ast=0/(r(A))^\ast$.

It follows that $L(A) / (r(A))^\ast$ is a Boolean algebra. By Proposition 6 of \cite{Cheptea1}, the lattices $L([r(A))_A)$ and $L(A)/(r(A))^\ast$ are isomorphic, so the reticulation $L([r(A))_A)$ of the quantale $[r(A)_A$ is a Boolean algebra. Applying Proposition 6.1, it follows that $[r(A))_A$ is a hyperarchimedian quantale.

($\Leftarrow$) Assume that the quantale $[r(A))_A$ is hyperarchimedean. By Proposition 6.2 we have $Max_F([r(A))_A)=Spec_F([r(A))_A)$, so $Max_F([r(A))_A)$ is compact (cf. Lemma 5.1). It is easy to see that $Max_F(A)=M_F([r(A))_A)$, so $Max_F(A)$ is compact.

\end{proof}

\begin{propozitie}
The topology of $Max_F(A)$ is finer than the topology of $Max_Z(A)$.
\end{propozitie}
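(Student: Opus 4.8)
The plan is to compare the standard bases of the two topologies and to reduce everything to Lemma 7.1. Recall that the basic open sets of $Max_Z(A)$ are the sets $D(c) \bigcap Max(A)$ with $c \in K(A)$, since $(D(c))_{c \in K(A)}$ is a basis for $Spec_Z(A)$ and $Max_Z(A)$ is the corresponding subspace of $Spec_Z(A)$. Likewise, by Remark 5.3,(ii), the sets $V(c) \bigcap Max(A)$ with $c \in K(A)$ form a basis for $Max_F(A)$. Hence the comparison of the two topologies amounts to checking that each basic open set of $Max_Z(A)$ is open in $Max_F(A)$.

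First I would invoke Lemma 7.1, which asserts that for every $c \in K(A)$ the set $V(c) \bigcap Max(A)$ is clopen in $Max_F(A)$. In particular its complement in $Max(A)$, namely $D(c) \bigcap Max(A)$, is open in $Max_F(A)$. Thus every basic open set of $Max_Z(A)$ is already open in $Max_F(A)$.

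Finally I would conclude that an arbitrary open set $U$ of $Max_Z(A)$, being a union of sets of the form $D(c) \bigcap Max(A)$, is a union of open sets of $Max_F(A)$, and is therefore open in $Max_F(A)$. This yields the required inclusion of the open-set families, i.e. that the topology of $Max_F(A)$ is finer than the topology of $Max_Z(A)$. I do not expect a genuine obstacle here: the whole content is carried by Lemma 7.1, whose proof used the maximality of the elements of $Max(A)$ to produce, for any $c \not\leq p$ with $p \in Max(A)$, a compact $d \leq p$ satisfying $c \lor d = 1$; the present statement is merely the repackaging of that clopen-ness of $V(c) \bigcap Max(A)$ into a comparison of the flat and spectral topologies on the maximal spectrum.
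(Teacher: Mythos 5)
Your proof is correct and takes essentially the same approach as the paper: both reduce the claim to showing that each basic open set $D(c)\bigcap Max(A)$ of $Max_Z(A)$ is open in $Max_F(A)$, which rests on the maximality argument producing a compact $d\leq m$ with $c\lor d=1$. The only difference is organizational: you cite Lemma 7.1 for this key fact, whereas the paper's proof of the proposition repeats that lemma's argument inline.
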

\begin{proof}
A basic open subset of $Max_Z(A)$ has the form $U = Max(A)\bigcap D(c)$, for some $c\in K(A)$. Let us consider an element $m\in U$ so $m\in Max(A)$ and $c\not\leq m$, hence $c\lor m = 1$. Thus we have $c\lor d = 1$ for some $d\in K(A)$ with $d\leq m$. Therefore $V(c)\bigcap V(d)$ = $V(c\lor d)$ = $V(1) = \emptyset$, so $m\in Max(A)\bigcap V(d)$ and
 $Max(A)\bigcap V(d)$ is included in $U$. It follows that $U$ is an open subset of $Max_F(A)$.
\end{proof}

The following proposition characterizes the quantales $A$ for which $Max_Z(A)$ and $Max_F(A)$ coincide.

\begin{propozitie}
If $A$ is a coherent quantale then the following are equivalent:
\usecounter{nr}
\begin{list}{(\arabic{nr})}{\usecounter{nr}}
\item $Max_F(A)$ is compact;

\item The topological spaces $Max_Z(A)$ and $Max_F(A)$ coincide;

\item $[r(A))_A$ is a hyperarchimedean quantale.

\end{list}

\end{propozitie}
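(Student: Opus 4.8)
The strategy is to prove the cycle $(1)\Rightarrow(3)\Rightarrow(2)\Rightarrow(1)$, reducing everything to results already established in the excerpt. The equivalence $(1)\Leftrightarrow(3)$ is in fact exactly Theorem~7.3, so it costs nothing; the genuine new content is the insertion of condition~(2), i.e.\ the coincidence of the spectral and the flat topology on $Max(A)$. By Proposition~7.5 we already know that the flat topology on $Max(A)$ is always finer than the Zariski one, so ``coincide'' really means only that $Max_Z(A)$ is at least as fine as $Max_F(A)$; one direction of the topological comparison is free.

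The implication $(1)\Rightarrow(2)$ is where I expect the main work. Assume $Max_F(A)$ is compact. By Proposition~7.5 the identity map $\mathrm{id}:Max_F(A)\rightarrow Max_Z(A)$ is continuous (since $\mathcal{Z}\subseteq\mathcal{F}$ on $Max(A)$, a continuous bijection). The plan is to apply Lemma~6.7: $Max_F(A)$ is compact by hypothesis, and $Max_Z(A)$ is Hausdorff because $Max_{Id,Z}(L(A))$ is the maximal subspace of a spectral space and maximal spectra of bounded distributive lattices are Hausdorff (alternatively, it is the quotient of the compact patch space, and distinct maximal elements $m_1\neq m_2$ satisfy $m_1\lor m_2=1$, giving disjoint basic Zariski neighbourhoods exactly as in the proof of Proposition~7.2). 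Lemma~6.7 then upgrades the continuous bijection $\mathrm{id}:Max_F(A)\rightarrow Max_Z(A)$ to a homeomorphism, so $\mathcal{Z}$ and $\mathcal{F}$ agree on $Max(A)$; that is condition~(2).

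For $(2)\Rightarrow(1)$, suppose the two topologies on $Max(A)$ coincide. The quantity I want is compactness of $Max_F(A)$, and here I would transport the known compactness of $Max_Z(A)$. The space $Max_Z(A)$ is homeomorphic to $Max_{Id,Z}(L(A))$ by Corollary~3.8, and the maximal spectrum of a bounded distributive lattice with $1\in K(A)$ is compact (every proper element lies below a maximal element, the standard Zariski-compactness argument via finite subcovers of the basic sets $D(c)$). Since by~(2) the flat and Zariski topologies on $Max(A)$ are identical as topological spaces, $Max_F(A)$ inherits this compactness verbatim.

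Finally $(3)\Rightarrow(1)$ is the $(\Leftarrow)$ direction of Theorem~7.3, and $(1)\Rightarrow(3)$ is its $(\Rightarrow)$ direction, so those two are already in hand and need only be cited. The one point deserving care is the Hausdorff property of $Max_Z(A)$ used in $(1)\Rightarrow(2)$: I must make sure the separation argument uses only $m_1\lor m_2=1$ together with $1\in K(A)$, so that it genuinely lands in the Zariski (spectral) topology rather than the flat one. Once that is pinned down, the whole proposition is a short assembly of Theorem~7.3, Proposition~7.5, Lemma~6.7 and the compactness of $Max_Z(A)$.
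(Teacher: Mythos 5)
Your citation of Theorem 7.3 for $(1)\Leftrightarrow(3)$ and your argument for $(2)\Rightarrow(1)$ are sound: $Max_Z(A)$ is compact for every coherent quantale (the finite-subcover argument with $1\in K(A)$ that you sketch is correct), so if the two topologies coincide then $Max_F(A)$ inherits compactness. That step is actually more direct than the paper, which never uses compactness of $Max_Z(A)$: the paper proves $(2)\Leftrightarrow(3)$ by applying Theorem 6.8 to the quantale $C=[r(A))_A$, via the identifications $Max_F(A)=Max_F(C)$ and $Max_Z(A)=Max_Z(C)$ and the fact that $Spec(C)=Max(C)$ once $C$ is hyperarchimedean.

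The genuine gap is in your $(1)\Rightarrow(2)$: to apply Lemma 6.7 you need $Max_Z(A)$ to be Hausdorff, and both of your justifications for this are false. The maximal spectrum of a bounded distributive lattice in the Stone topology is $T_1$ but not Hausdorff in general: for $A=Id(\mathbb{Z})$ the space $Max_Z(A)$ is an infinite set carrying the cofinite topology, in which any two nonempty open sets meet. Your ``alternative'' separation argument fails for exactly the reason you flag at the end: from $m_1\lor m_2=1$ one gets compact elements $c\leq m_1$, $d\leq m_2$ with $c\lor d=1$, and the resulting disjoint sets $V(c)\cap Max(A)$, $V(d)\cap Max(A)$ are \emph{flat}-open (this is the proof of Proposition 7.2), not Zariski-open; Zariski separation would instead require $c\not\leq m_1$, $d\not\leq m_2$ with $cd\leq r(A)$, and by Corollary 7.11 Hausdorffness of $Max_Z(A)$ is equivalent to $[r(A))_A$ being a \emph{normal} quantale --- a nontrivial hypothesis, not a free fact. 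The repair is to reverse your logical order: from $(1)$ first obtain $(3)$ by Theorem 7.3; then, for the hyperarchimedean quantale $C=[r(A))_A$, Proposition 6.2 gives $Spec(C)=Max(C)=Max(A)$ and Theorem 6.8 gives $\mathcal{Z}_C=\mathcal{F}_C$, whence $Max_Z(A)=Spec_Z(C)=Spec_F(C)=Max_F(A)$. (Once Hausdorffness of $Max_Z(A)$ is obtained by this route, your Lemma 6.7 argument also goes through, but it is then redundant.) This corrected path is essentially the paper's proof of $(3)\Rightarrow(2)$.
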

\begin{proof}
The equivalence of $(1)$ and $(3)$ follows by Theorem 7.3. We remark that the following equalities hold: $Max_F(A)$ =  $Max_F([r(A))_A)$ and  $Max_Z([r(A))_A)$ = $Max_Z(A)$. According to Theorem 6.8 the assertions $(2)$ and $(3)$ are equivalent.
\end{proof}

Following \cite{Johnstone}, p.199 we say that a commutative ring $R$ is a Gelfand ring if each prime ideal of $R$ is contained in a unique maximal ideal. Recall from \cite{Simmons},\cite{Johnstone} that a bounded distributive lattice $L$ is called normal if for all elements $x,y\in L$ such that $x\lor y$ = $1$ there exist $u,v\in L$ such that $x\lor u = y\lor v = 1$ and $uv = 0$. We know from \cite{Johnstone}, p.68 that a bounded distributive lattice $L$ is normal if and only if each prime ideal of $L$ is contained in a unique maximal ideal. The normal quantales  were introduced in \cite{PasekaRN} as an abstractization of the lattices of ideals of Gelfand rings and normal lattices.

According to \cite{PasekaRN}, a quantale $A$ is said to be normal if for all $a,b\in A$ such that $a\lor b = 1$ there exist $e,f\in A$ such that $a\lor e$ = $b\lor f$ = $1$ and $ef = 0$. If $1\in K(A)$ then $A$ is normal if and only if for all $c,d\in K(A)$ such that $c\lor d = 1$ there exist $e,f\in K(A)$ such that $c\lor e$ = $d\lor f$ = $1$ and $ef = 0$ (cf. Lemma 20 of \cite{Cheptea1}). One observes that a commutative ring $R$ is a Gelfand ring iff $Id(R)$ is a normal quantale and a bounded distributive lattice $L$ is normal iff $Id(L)$ is a normal frame.

The normal quantales offer an abstract framework in order to unify some algebraic and  topological properties of commutative Gelfand rings \cite{Johnstone}, \cite{b}, \cite{c}, \cite{lu}, \cite{se}, normal lattices \cite{Johnstone}, \cite{GeorgescuVoiculescu}, \cite{Pawar}, \cite{Simmons}, commutative unital $l$ - groups \cite{Birkhoff}, $F$ - rings \cite{Birkhoff}, \cite{Johnstone}, $MV$ - algebras and $BL$ - algebras \cite{Galatos}, \cite{g}, Gelfand residuated lattices \cite{GCM}, etc.

Let us fix a coherent quantale $A$.
\begin{propozitie}
\cite{Cheptea1}
The quantale $A$ is normal if and only if the reticulation $L(A)$ is a normal lattice ( in the sense of \cite{Simmons},\cite{Johnstone}).
\end{propozitie}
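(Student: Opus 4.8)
The plan is to transfer normality back and forth along the surjection $\lambda_A : K(A) \to L(A)$, using its compatibility with joins (Lemma 3.2,(2)) and with products (Definition 3.1,(2)), together with the dictionary between the order of $L(A)$ and powers in $A$ (Definition 3.1,(3) and Lemma 3.2,(3),(5)). Since $1 \in K(A)$, I would work throughout with the compact reformulation of normality recalled just above the statement: $A$ is normal iff for all $c,d \in K(A)$ with $c \lor d = 1$ there exist $e,f \in K(A)$ with $c \lor e = d \lor f = 1$ and $ef = 0$.

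For the implication ``$A$ normal $\Rightarrow$ $L(A)$ normal'', I would take $x,y \in L(A)$ with $x \lor y = 1$ and lift them by surjectivity of $\lambda_A$ to $x = \lambda_A(c)$, $y = \lambda_A(d)$ with $c,d \in K(A)$. Then $\lambda_A(c \lor d) = x \lor y = 1$, so $c \lor d = 1$ by Lemma 3.2,(3), and normality of $A$ yields $e,f \in K(A)$ with $c \lor e = d \lor f = 1$ and $ef = 0$. Applying $\lambda_A$ and using $\lambda_A(0) = 0$ (Lemma 3.2,(4)), the elements $u = \lambda_A(e)$, $v = \lambda_A(f)$ satisfy $x \lor u = y \lor v = 1$ and $u \land v = \lambda_A(ef) = 0$; hence $L(A)$ is normal.

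For the converse, given $c,d \in K(A)$ with $c \lor d = 1$, I would apply normality of $L(A)$ to $\lambda_A(c) \lor \lambda_A(d) = 1$, obtaining $u,v \in L(A)$ with $\lambda_A(c) \lor u = \lambda_A(d) \lor v = 1$ and $u \land v = 0$, and then lift these to $u = \lambda_A(e)$, $v = \lambda_A(f)$ by surjectivity. Lemma 3.2,(3) gives $c \lor e = 1$ and $d \lor f = 1$. The one obstacle is that $u \land v = \lambda_A(ef) = 0$ only yields $(ef)^n = 0$ for some integer $n \geq 1$ (Lemma 3.2,(5)), rather than $ef = 0$ on the nose.

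I expect this power defect to be the sole difficulty, and I would clear it by passing to powers. Put $e' = e^n$ and $f' = f^n$, which remain compact since $K(A)$ is closed under multiplication; then $e' f' = e^n f^n = (ef)^n = 0$ by commutativity. From $c \lor e = 1$, Lemma 2.1,(2) gives $c^n \lor e^n = 1$, and integrality forces $c^n \leq c$, whence $c \lor e' \geq c^n \lor e^n = 1$; symmetrically $d \lor f' = 1$. Thus $e',f' \in K(A)$ witness the compact form of normality for $A$, which completes the proof.
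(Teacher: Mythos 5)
Your proposal is correct. Note first that the paper itself offers no proof of this proposition: it is quoted verbatim from \cite{Cheptea1} (Cheptea--Georgescu), so there is no in-paper argument to compare against. Your self-contained verification is sound and uses exactly the tools the paper makes available: the compact reformulation of normality stated just before the proposition (Lemma 20 of \cite{Cheptea1}), surjectivity of $\lambda_A$, the identities $\lambda_A(a\lor b)=\lambda_A(a)\lor\lambda_A(b)$ and $\lambda_A(ab)=\lambda_A(a)\land\lambda_A(b)$, and the equivalences $\lambda_A(a)=1 \Leftrightarrow a=1$ and $\lambda_A(a)=0 \Leftrightarrow a^n=0$. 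You correctly identified the only genuine subtlety, namely that in the converse direction $\lambda_A(ef)=0$ gives only $(ef)^n=0$ rather than $ef=0$, and your repair is exactly right: replacing $e,f$ by $e^n,f^n$ (still compact, since $K(A)$ is closed under multiplication) kills the product by commutativity, while $c\lor e^n=1$ follows from Lemma 2.1,(2) together with $c^n\leq c$ (integrality). This power-passing device is the same one the paper itself uses elsewhere (e.g.\ in Lemma 4.3 and Proposition 6.4) to transfer properties across the reticulation, so your argument is in the spirit of the source even though the source leaves it implicit.
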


\begin{propozitie}
\cite{PasekaRN},\cite{GeorgescuVoiculescu2},\cite{SimmonsC}
If $A$ is a coherent quantale then the following are equivalent:
\usecounter{nr}
\begin{list}{(\arabic{nr})}{\usecounter{nr}}
\item $A$ is a normal quantale;
\item For all distinct $m,n\in Max(A)$ there exist $c_1,c_2\in K(A)$ such that  $c_1\not\leq m$, $c_2\not\leq n$  and $c_1c_2 = 0$;
\item The inclusion $Max(A)\subseteq Spec(A)$ is a Hausdorff embedding (i.e. any distinct points in $Max(A)$ have disjoint neighbourhoods in $Spec_Z(A))$;
\item For any $p\in Spec(A)$ there exists a unique $m\in Max(A)$ such that $p\leq m$;
\item $Spec_Z(A)$ is a normal space;
\item The inclusion $Max_Z(A)\subseteq Spec_Z(A)$ has a continuous retraction $\gamma:Spec_Z(A)\rightarrow Max_Z(A)$;
\item If $m\in Max(A)$ then $\Lambda(m)$ is a closed subset of $Spec_Z(A)$.
\end{list}

\end{propozitie}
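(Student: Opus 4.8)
The whole argument rests on the reticulation. By Proposition 7.6 the quantale $A$ is normal precisely when the bounded distributive lattice $L(A)$ is normal, and by Proposition 3.7 together with Corollary 3.8 the maps $u,v$ are mutually inverse homeomorphisms identifying $Spec_Z(A)$ with $Spec_{Id,Z}(L(A))$ and $Max_Z(A)$ with $Max_{Id,Z}(L(A))$; since $u$ and $v$ are order--preserving with order--preserving inverses, $p\leq m$ in $A$ is equivalent to $p^{\ast}\subseteq m^{\ast}$ in $L(A)$. Thus every condition about $m$-prime elements lying below maximal elements, and every topological condition on $Spec_Z(A)$ or $Max_Z(A)$, can be transported to the classical theory of normal lattices (\cite{Simmons},\cite{Johnstone}). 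The plan is to run an elementary cycle through $(1)$--$(4)$ inside $A$, close it and attach $(5)$ by transport across the reticulation, and then attach $(6)$ and $(7)$ by short topological arguments.

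First I would prove $(1)\Rightarrow(2)\Rightarrow(3)\Rightarrow(4)$ directly. For $(1)\Rightarrow(2)$, distinct $m,n\in Max(A)$ satisfy $m\lor n=1$, and as $1\in K(A)$ there are $c,d\in K(A)$ with $c\leq m$, $d\leq n$ and $c\lor d=1$; the compact form of normality (Lemma 20 of \cite{Cheptea1}) yields $e,f\in K(A)$ with $c\lor e=d\lor f=1$ and $ef=0$, and then $c\leq m$ with $c\lor e=1$ force $e\not\leq m$, likewise $f\not\leq n$, so $(c_1,c_2)=(e,f)$ works. For $(2)\Rightarrow(3)$ the elements of $(2)$ give disjoint basic neighbourhoods $D(c_1)\ni m$ and $D(c_2)\ni n$ with $D(c_1)\cap D(c_2)=D(c_1c_2)=D(0)=\emptyset$, which is exactly the Hausdorff--embedding property. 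For $(3)\Rightarrow(4)$, if some $p$ lay below two distinct maximal elements $m_1\neq m_2$, then each basic open set is downward saturated ($c\not\leq m_i$ and $p\leq m_i$ give $c\not\leq p$), so $p$ belongs to every neighbourhood of $m_1$ and of $m_2$, contradicting Hausdorffness.

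To close the loop I would prove $(4)\Rightarrow(1)$ by transport: $(4)$ says each $p^{\ast}\in Spec_{Id}(L(A))$ sits inside a unique maximal ideal, which by the classical theorem (\cite{Johnstone}, p.~68) means $L(A)$ is normal, whence $A$ is normal by Proposition 7.6. The equivalence $(1)\Leftrightarrow(5)$ comes the same way from the classical fact that a distributive lattice $L$ is normal iff $Spec_{Id,Z}(L)$ is a normal space, carried over by the homeomorphism of Proposition 3.7. For $(6)$, note that any continuous retraction $\gamma\colon Spec_Z(A)\to Max_Z(A)$ is forced to be the order retraction: $Max_Z(A)$ is $T_1$ (the closure of $\{m\}$ is $V(m)\cap Max(A)=\{m\}$), and for $p\leq m$ one has $m\in cl_Z(\{p\})=V(p)$, so $m=\gamma(m)\in cl_Z(\{\gamma(p)\})\cap Max(A)=\{\gamma(p)\}$; hence $m=\gamma(p)$ and every maximal above $p$ coincides with $\gamma(p)$, giving $(6)\Rightarrow(4)$. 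Conversely $(1)\Rightarrow(6)$ is obtained by transporting the continuous Gelfand retraction $Spec_{Id,Z}(L(A))\to Max_{Id,Z}(L(A))$ available for the normal lattice $L(A)$. Finally, with $\gamma$ identified as the order retraction, $\Lambda(m)=\gamma^{-1}(\{m\})$ is closed because $\gamma$ is continuous and $\{m\}$ is closed, giving $(6)\Rightarrow(7)$; and $(7)\Rightarrow(4)$ since a Zariski--closed $\Lambda(m_1)$ equals some $V(a_1)$, so $a_1\leq p\leq m_2$ would force $m_2\in V(a_1)=\Lambda(m_1)$, i.e. $m_2\leq m_1$, contradicting $m_1\neq m_2$.

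The step I expect to be genuinely substantial is $(1)\Rightarrow(6)$, the continuity of the retraction onto the maximal spectrum: arguing directly in $A$ that $\gamma^{-1}(D(c)\cap Max(A))$ is Zariski--open uses normality essentially, so the clean route is to avoid computing in $A$ and instead transport the classical Gelfand retraction across the reticulation homeomorphisms. The remaining implications are short once the homeomorphisms of Proposition 3.7 and Corollary 3.8 are in place; the only point requiring care is to remember that the closed sets of $Spec_Z(A)$ are exactly the sets $V(a)$, which is what makes the direct argument for $(7)\Rightarrow(4)$ go through.
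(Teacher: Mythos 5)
Your proof is correct, but it is worth noting that the paper itself never writes one out: Proposition 7.7 is stated as a known result (with citations to Paseka, Georgescu--Voiculescu and Simmons), and Remark 7.8 merely observes that a proof \emph{can} be obtained by combining Proposition 7.6 ($A$ normal iff $L(A)$ normal) with the classical characterizations of normal lattices, transported across the homeomorphisms of Proposition 3.7. Your proposal follows that recommended backbone for the implications that genuinely need lattice theory --- $(4)\Rightarrow(1)$ via the Johnstone p.~68 characterization, $(1)\Leftrightarrow(5)$ via the normal-space criterion, and $(1)\Rightarrow(6)$ via the classical Gelfand retraction --- but, unlike the paper, you also supply direct, self-contained quantale-level arguments for the rest: the cycle $(1)\Rightarrow(2)\Rightarrow(3)\Rightarrow(4)$ using the compact form of normality and the identity $D(c_1)\cap D(c_2)=D(c_1c_2)$, the forcing argument showing any continuous retraction must send $p$ to the unique maximal element above it (giving $(6)\Rightarrow(4)$ and $(6)\Rightarrow(7)$ via $\Lambda(m)=\gamma^{-1}(\{m\})$), and the observation that closed sets of $Spec_Z(A)$ are exactly the sets $V(a)$ (giving $(7)\Rightarrow(4)$). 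These direct arguments are all sound --- in particular you correctly use $cl_Z(\{p\})=V(p)$, the $T_1$ property of $Max_Z(A)$, and the fact that maximal elements of $A$ coincide with maximal points of the poset $Spec(A)$ since $1\in K(A)$. What the paper's approach buys is brevity and a clean delegation to the literature; what yours buys is a proof that is verifiable within the quantale framework except for two genuinely classical lattice facts, and it makes explicit exactly where normality of $L(A)$ is indispensable (namely $(4)\Rightarrow(1)$, $(5)$, and the construction of the retraction in $(1)\Rightarrow(6)$).
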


\begin{remarca}
A proof of the previous proposition can be obtained by using Proposition 7.6 and some characterizations of normal lattices given in \cite{GeorgescuVoiculescu}, \cite{Johnstone}, \cite{Pawar}, \cite{Simmons}.

\end{remarca}

\begin{teorema}
Assume that $A$ is a normal quantale. Then the retraction map $\gamma:Spec(A)\rightarrow Max(A)$ is flat continuous if and only if $Max_F(A)$ is a compact space.
\end{teorema}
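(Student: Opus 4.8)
The plan is to prove the two implications separately, treating the necessity of compactness as the routine direction and the sufficiency as the substantial one.

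For the forward implication, suppose $\gamma : Spec_F(A) \rightarrow Max_F(A)$ is flat continuous. By Lemma 5.1 the space $Spec_F(A)$ is spectral, hence in particular compact, and $\gamma$ is surjective because, being a retraction, it restricts to the identity on $Max(A)$. Since a continuous image of a compact space is compact, $Max_F(A) = \gamma(Spec_F(A))$ is compact. This direction requires no further ideas.

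For the reverse implication, assume $Max_F(A)$ is compact. First I would invoke Proposition 7.5 (equivalently Theorem 7.3) to replace this hypothesis by the equality of topologies $Max_Z(A) = Max_F(A)$. The goal is to show that for every $c \in K(A)$ the preimage $\gamma^{-1}(V(c) \cap Max(A))$ is open in $Spec_F(A)$, since by Remark 5.3(ii) the sets $V(c) \cap Max(A)$ form a basis of $Max_F(A)$. By Lemma 7.1 each $V(c) \cap Max(A)$ is clopen in $Max_F(A)$, hence, under the identification $Max_Z(A) = Max_F(A)$, clopen in $Max_Z(A)$. Because $A$ is normal, Proposition 7.7(6) furnishes a Zariski continuous retraction $\gamma : Spec_Z(A) \rightarrow Max_Z(A)$; therefore $G := \gamma^{-1}(V(c) \cap Max(A))$ is a clopen subset of the spectral space $Spec_Z(A)$.

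The decisive step is to recognise that clopen subsets of $Spec_Z(A)$ come from the Boolean center. Via the homeomorphism $Spec_Z(A) \cong Spec_{Id}(L(A))$ of Proposition 3.7 and the isomorphism $B(A) \cong B(L(A))$ of Corollary 4.4, every clopen set of $Spec_Z(A)$ has the form $D(e)$ for some $e \in B(A)$. For such an $e$ one has $e \lor e^{\perp} = 1$ and $e e^{\perp} = e \land e^{\perp} = 0$ (Lemma 2.1(1)), so for each $p \in Spec(A)$ exactly one of $e \leq p$, $e^{\perp} \leq p$ holds; hence $D(e) = V(e^{\perp})$. Since $e^{\perp} \in B(A) \subseteq K(A)$ by Lemma 4.2, the set $G = V(e^{\perp})$ is a basic flat open set. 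Thus $\gamma^{-1}(V(c) \cap Max(A))$ is flat open, so $\gamma : Spec_F(A) \rightarrow Max_F(A)$ is flat continuous. The main obstacle, and the only place where genuine work is needed, is precisely this identification of the clopen set $G$ with a set $V(e^{\perp})$ arising from a complemented (hence compact) element; everything else is bookkeeping with topologies already established. As a consistency check I would also record that $\gamma$ is constant on each $\Lambda(p)$ — by normality the unique maximal element above $p$ is the unique maximal element above every $q \geq p$ — which confirms that $G$ is an up-set for $\leq$, as any flat open set must be.
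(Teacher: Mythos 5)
Your proof is correct and takes essentially the same route as the paper's: the forward direction is the continuous image of the compact space $Spec_F(A)$, and the converse combines Proposition 7.5 (compactness of $Max_F(A)$ forces $Max_Z(A)=Max_F(A)$) with the Zariski-continuous retraction supplied by normality and the identification of clopen subsets of $Spec_Z(A)$ with sets of the form $D(e)=V(e^{\perp})$, $e\in B(A)$, which are flat open since $B(A)\subseteq K(A)$. The only cosmetic difference is that you pull back the flat basic opens $V(c)\cap Max(A)$ where the paper pulls back the sets $D(c)\cap Max(A)$; both arguments rest on exactly the same key facts.
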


\begin{proof}
Assume that the retraction map $\gamma:Spec_F(A)\rightarrow Max_F(A)$ is continuous. Since $Spec_F(A)$ is compact it follows that $Max_F(A)$ is also compact.

Conversely, assume that $Max_F(A)$ is compact, hence by Proposition 7.2 it is a Boolean space. By Proposition 7.3 it results that $[(r(A)_A)$ is a hyperarchimedean quantale. Applying the condition (4) of Theorem 6.7 one gets $Max_Z([(r(A)_A)$ = $Max_F([(r(A))_A)$. We remark that $Max_Z(A)$ and $Max_F([(r(A))_A)$ are homeomorphic, thus  by Theorem 6.7(4) it follows that $Max_Z(A)$ is a Boolean space. Thus $(Max(A)\bigcap D(c))_{c\in K(A)}$ is a basis of clopen sets in $Max_Z(A)$.

Let us consider an element $c\in K(A)$; in accordance to the continuity of $\gamma:Spec_F(A)\rightarrow Max_F(A)$, it follows that $\gamma^{-1} (Max(A)\bigcap D(c))$ is a clopen set in $Spec_Z(A)$. Applying Lemma 2.4 of \cite{Cheptea1} we find an element $e\in B(A)$ such that  $\gamma^{-1} (Max(A)\bigcap D(c))$ = $V(e)$. Therefore $\gamma^{-1} (Max(A)\bigcap D(c))$ is a clopen subset of $Max_F(A)$ (cf. Lemma 7.1), hence the map $\gamma:Spec_F(A)\rightarrow Max_F(A)$ is continuous.

\end{proof}

\begin{propozitie}

If $Max_Z(A)$ is Hausdorff and $\rho(0) = r(A)$ then the quantale $A$ is normal.

\end{propozitie}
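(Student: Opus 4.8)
The plan is to verify condition (2) of Proposition 7.7, namely that for every pair of distinct maximal elements $m,n$ there exist $c_1,c_2\in K(A)$ with $c_1\not\leq m$, $c_2\not\leq n$ and $c_1c_2=0$; by that proposition this condition is equivalent to normality of $A$. First I would fix distinct $m,n\in Max(A)$ and exploit the hypothesis that $Max_Z(A)$ is Hausdorff. Since the sets $Max(A)\cap D(c)$ with $c\in K(A)$ form a basis of $Max_Z(A)$, the points $m$ and $n$ admit disjoint basic neighbourhoods $Max(A)\cap D(c_1)\ni m$ and $Max(A)\cap D(c_2)\ni n$. Reading off the memberships immediately gives $c_1\not\leq m$ and $c_2\not\leq n$, which already supplies two of the three requirements.

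Next I would translate the disjointness into an algebraic inequality. For $m$-prime elements one has $D(c_1)\cap D(c_2)=D(c_1c_2)$, so the emptiness of the intersection means $Max(A)\cap D(c_1c_2)=\emptyset$, i.e.\ $c_1c_2\leq m'$ for every $m'\in Max(A)$. Hence $c_1c_2\leq\bigwedge Max(A)=r(A)$, and the hypothesis $r(A)=\rho(0)$ upgrades this to $c_1c_2\leq\rho(0)$. Because $c_1c_2$ is compact (the product of compact elements being compact in a coherent quantale), Lemma 2.4(2) yields an integer $k\geq 1$ with $(c_1c_2)^k=c_1^kc_2^k=0$. Setting $u=c_1^k$ and $v=c_2^k$, both compact, I obtain $uv=0$; and since $m$ is $m$-prime, $c_1\not\leq m$ forces $c_1^k=u\not\leq m$, and likewise $v\not\leq n$. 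This is exactly condition (2), so $A$ is normal by Proposition 7.7.

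The subtle point, and the reason the extra hypothesis is needed, is that Hausdorffness of $Max_Z(A)$ separates $m$ and $n$ only inside the subspace $Max(A)$, whereas normality in the form of condition (3) of Proposition 7.7 asks for disjoint neighbourhoods inside the larger space $Spec_Z(A)$. The equality $r(A)=\rho(0)$ is precisely what bridges this gap: it converts the weak conclusion $c_1c_2\leq r(A)$ coming from separation in the subspace into a genuine $c_1c_2\leq\rho(0)$, after which the radical machinery of Lemma 2.4 delivers the literal vanishing $uv=0$ demanded by the compact-element characterisation of normal quantales. I expect the only steps requiring care to be the passage from $c_1c_2\leq\rho(0)$ to an honest zero product and the verification that raising to the $k$-th power preserves non-domination by $m$ and $n$; both are routine consequences of $m$-primeness together with Lemma 2.4.
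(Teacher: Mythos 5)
Your proof is correct, but it closes the argument differently from the paper. Both proofs start the same way: Hausdorff separation of two maximal elements by basic opens $Max(A)\cap D(c_1)$, $Max(A)\cap D(c_2)$, the identity $D(c_1)\cap D(c_2)=D(c_1c_2)$, and the hypothesis $\rho(0)=r(A)$ to conclude $c_1c_2\leq\rho(0)$. From there the paper argues by contradiction via condition (4) of Proposition 7.7: it assumes $A$ is not normal, takes a prime $p$ lying below two distinct maximals $q,r$, separates $q$ and $r$, and then uses $c d\leq\rho(0)\leq p$ together with $m$-primeness of $p$ to contradict the separation --- no honest zero product is ever produced. You instead verify condition (2) of Proposition 7.7 directly: you invoke Lemma 2.4(2) to upgrade $c_1c_2\leq\rho(0)$ to $(c_1c_2)^k=c_1^kc_2^k=0$, and then use $m$-primeness of the maximal elements themselves to check that passing to the powers $u=c_1^k$, $v=c_2^k$ preserves $u\not\leq m$ and $v\not\leq n$. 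Your route is constructive (it exhibits the witnessing compact elements with a genuine zero product) and isolates cleanly where the hypothesis $\rho(0)=r(A)$ enters, at the cost of needing the radical lemma and the power manipulation; the paper's route is shorter, avoiding Lemma 2.4 entirely, but yields no witnesses and leans on the uniqueness-of-maximal formulation of normality. Both are valid proofs of the proposition.
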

\begin{proof}
Assume by absurdum that the quantale $A$ is not normal, so by Proposition 7.7,(4) there exist $p\in Spec(A)$ and $q,r\in Max(A)$ such that $q\neq r$, $p\leq q$ and $p\leq r$. Since $Max_Z(A)$ is Hausdorff there exist $c,d\in K(A)$ such that $q\in D(c)$, $r\in D(d)$ and $D(cd)\bigcap Max(A)$ =
$D(c)\bigcap D(d)\bigcap Max(A)$ = $\emptyset$.

If $cd\not\leq \rho(0)$ then $cd\not\leq r(A)$, so  $cd\not\leq m$ for some $m\in Max(A)$. It results that $m\in D(cd)\bigcap Max(A)$, contradicting $D(cd)\bigcap Max(A)$ = $\emptyset$. Thus $cd\leq \rho(0)$, hence one gets $c\leq p$ or $d\leq p$. If $c\leq p$ then $c\leq q$, contradicting $q\in D(c)$; similarly, $c\leq p$
contradicts $r\in D(d)$. We conclude that $A$ is normal.
\end{proof}

\begin{corolar}
$Max_Z(A)$ is a Hausdorff space if and only if $[r(a))_A$ is a normal quantale.
\end{corolar}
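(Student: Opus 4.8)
The plan is to reduce the statement to Proposition 7.10 and Proposition 7.7 applied not to $A$ itself but to the interval quantale $B = [r(A))_A$. The crucial preliminary observation is that $r(A)$ is a radical element of $A$. Indeed, since $1 \in K(A)$ we have $Max(A) \subseteq Spec(A)$, so $r(A) = \bigwedge Max(A)$ is a meet of $m$-prime elements; as every $m \in Max(A)$ satisfies $r(A) \leq m$, we get $\rho(r(A)) = \bigwedge\{p \in Spec(A) \mid r(A) \leq p\} \leq \bigwedge Max(A) = r(A)$, while the reverse inequality holds by Lemma 2.2,(1). Hence $r(A) = \rho(r(A))$ and $B = [r(A))_A = [\rho(r(A)))_A$, which is a coherent quantale.

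Consequently Lemma 2.6 applies with $a = r(A)$: the quantale $B$ is semiprime and $Max(B) = Max(A)$. I would then check that the hypotheses of Proposition 7.10 are automatically satisfied inside $B$. The bottom element of $B$ is $0_B = r(A)$, and semiprimeness gives $\rho_B(0_B) = 0_B = r(A)$. Moreover $r(B) = \bigwedge Max(B) = \bigwedge Max(A) = r(A)$, the relevant meet being computed in $A$ and landing on $r(A) \in B$. Therefore $\rho_B(0_B) = r(B)$, so the extra hypothesis ``$\rho(0) = r(A)$'' of Proposition 7.10 holds for $B$ without any further assumption. I would also record, as in the proof of Proposition 7.5, that $Max_Z(A)$ and $Max_Z(B)$ are the same topological space: the underlying sets coincide since $Max(A) = Max(B)$, and the subspace topologies agree because the relevant compacts of $B$ have the form $c \lor r(A)$ with $c \in K(A)$.

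With these facts the equivalence is immediate in both directions. For the forward implication, if $Max_Z(A)$ is Hausdorff then so is $Max_Z(B)$, and Proposition 7.10 applied to $B$ (using $\rho_B(0_B) = r(B)$) yields that $B = [r(A))_A$ is normal. For the converse, if $[r(A))_A$ is normal then condition (3) of Proposition 7.7 applied to $B$ states that the inclusion $Max(B) \subseteq Spec(B)$ is a Hausdorff embedding; intersecting the separating neighbourhoods with $Max(B)$ shows that $Max_Z(B) = Max_Z(A)$ is Hausdorff.

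The step I expect to be the main obstacle is the preliminary reduction, namely verifying that $r(A)$ is radical and hence that $B = [r(A))_A$ is semiprime with $r(B) = 0_B$. This is precisely what converts the conditional Proposition 7.10 into an unconditional equivalence, and it is the only place where genuine computation in the interval quantale is required; everything afterwards is a transport of the already established Propositions 7.7 and 7.10 along the identification $Max_Z(A) = Max_Z(B)$.
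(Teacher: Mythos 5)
Your proof is correct and follows essentially the same route as the paper: the paper's proof likewise observes that $C=[r(A))_A$ satisfies $\rho_C(0)=r(C)$ and $Max_Z(A)=Max_Z(C)$, and then applies Proposition 7.10 (with Proposition 7.7 implicitly giving the converse direction). The only difference is that you spell out the details the paper merely asserts, namely the verification that $r(A)$ is radical (so that Lemma 2.6 yields semiprimeness of $B$), the identification of the subspace topologies, and the explicit appeal to Proposition 7.7,(3).
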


\begin{proof}
We observe that the quantale $C$ = $[r(a))_A$ verifies the conditions $\rho_C(0)$ = $r(C)$ and $Max_Z(A)$ = $Max_C(A)$. Applying Proposition 7.10 to the quantale $C$ the following equivalences hold: $Max_Z(A)$ is Hausdorff iff $Max_Z(C)$ is Hausdorff iff $C$ is a normal quantale.
\end{proof}

Following \cite{Cheptea1} we say that a quantale $A$ is said to be $B$ - normal if for all $c,d\in K(A)$ there exist $e,f\in B(A)$ such that $c\lor e$ = $d\lor f$ = 1 and $cd = 0$. If the B - normal quantale $A$ is a frame then we shall say that $A$ is a $B$ - normal frame. The $B$ - normal quantales constitute an abstract setting in which we can generalize various results on the clean commutative rings \cite{b}, \cite{a}, the $B$ - normal (bounded distributive ) lattices \cite{Cignoli}, the clean unital $l$ - groups \cite{f}, the quasi - local $BL$ - algebras \cite{g}, the quasi - local residuated lattices \cite{Muresan},etc.

\begin{lema}
If $A$ is normal quantale then $(D(e)\bigcap Max(A))_{e\in B(A)}$ is the family of the clopen subsets of $Max_Z(A)$.
\end{lema}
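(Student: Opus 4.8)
The plan is to prove the two inclusions separately, since the statement asserts that the family $(D(e)\cap Max(A))_{e\in B(A)}$ coincides with the set of clopen subsets of $Max_Z(A)$. The first inclusion, that each $D(e)\cap Max(A)$ is clopen, does not use normality and I would dispatch it quickly. For $e\in B(A)$ one has $e\lor e^{\perp}=1$, so by Lemma 2.1,(1) $e\,e^{\perp}=e\land e^{\perp}=0$. Using the standard identities $D(a)\cap D(b)=D(ab)$ and $D(a\lor b)=D(a)\cup D(b)$ for $m$-prime elements, I get $D(e)\cap D(e^{\perp})=D(0)=\emptyset$ and $D(e)\cup D(e^{\perp})=D(1)=Spec(A)$, so $D(e^{\perp})=Spec(A)\setminus D(e)$. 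Hence $D(e)$ is clopen in $Spec_Z(A)$, and its trace $D(e)\cap Max(A)$ is clopen in $Max_Z(A)$.

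For the converse inclusion I would first record that $Max_Z(A)$ is compact: if $Max(A)\subseteq\bigcup_\alpha D(c_\alpha)$ with $c_\alpha\in K(A)$, then $\bigvee_\alpha c_\alpha=1$ (otherwise this join lies below some maximal element, which would then avoid every $D(c_\alpha)$), and since $1\in K(A)$ is compact a finite subjoin already equals $1$, yielding a finite subcover. Now let $C$ be a clopen subset of $Max_Z(A)$. Being closed in the compact space $Max_Z(A)$, $C$ is compact, and being open it is a union of basic sets $D(c_\alpha)\cap Max(A)$; extracting a finite subcover and joining the corresponding compacts gives $C=D(c)\cap Max(A)$ for a single $c\in K(A)$. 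Applying the same argument to the clopen complement gives $Max(A)\setminus C=D(d)\cap Max(A)$ for some $d\in K(A)$. From $C\cup(Max(A)\setminus C)=Max(A)\subseteq D(c)\cup D(d)=D(c\lor d)$ I conclude $c\lor d=1$.

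At this point I would invoke the compact form of normality (the restatement of Lemma 20 of \cite{Cheptea1} used before Proposition 7.6): from $c\lor d=1$ there exist $u,v\in K(A)$ with $c\lor u=1$, $d\lor v=1$ and $uv=0$. The key computation, which I expect to be the main obstacle, is to verify on $Max(A)$ that $D(v)\cap Max(A)=C$ and, crucially, that $u\lor v=1$. For $m\in C$ one has $c\not\leq m$, hence $d\leq m$ (complementarity) and then $d\lor v=1$ forces $v\not\leq m$, giving $C\subseteq D(v)\cap Max(A)$; conversely if $v\not\leq m$ with $m\in Max(A)$ and one assumed $c\leq m$, then $c\lor u=1$ gives $u\not\leq m$, so $uv=0\leq m$ and primeness of $m$ would force $v\leq m$, a contradiction, whence $c\not\leq m$ and $m\in C$. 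Running the same bookkeeping shows $u\lor v\not\leq m$ for every $m\in Max(A)$: on $C$ because $v\not\leq m$, and on $Max(A)\setminus C$ because there $c\leq m$ forces $u\not\leq m$. Since $u\lor v\in K(A)$ is not below any maximal element, $u\lor v=1$.

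Finally, combining $u\lor v=1$ with $uv=0$ and applying Lemma 4.1,(4) yields $u,v\in B(A)$. Taking $e=v$ gives $C=D(e)\cap Max(A)$ with $e\in B(A)$, completing the converse inclusion and hence the proof. The delicate point throughout is the ``Booleanization'' step: normality only supplies $u,v\in K(A)$ with $uv=0$, and the real work is the maximal-spectrum argument that upgrades this to $u\lor v=1$, so that Lemma 4.1,(4) can manufacture genuine complemented elements.
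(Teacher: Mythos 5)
Your proof is correct, and it takes a genuinely different route from the paper. The paper's argument is top-down: it uses the continuous retraction $\gamma:Spec_Z(A)\rightarrow Max_Z(A)$ (whose existence is one of the equivalent formulations of normality, Proposition 7.7,(6)) to pull the clopen set $K$ back to a clopen set $\gamma^{-1}(K)$ of $Spec_Z(A)$, then invokes Lemma 24 of \cite{Cheptea1} to write that clopen set as $D(e)$ with $e\in B(A)$, and finally pushes forward along $\gamma$ using the observation that $e\leq p$ iff $e\leq\gamma(p)$. You avoid both of these external ingredients: you work entirely inside $Max(A)$, using its flat-free compactness, the representation of a clopen set and its complement as $D(c)\cap Max(A)$ and $D(d)\cap Max(A)$ with $c,d\in K(A)$, the compact-element form of normality to produce $u,v\in K(A)$ with $uv=0$, and then a direct maximal-spectrum argument showing $u\lor v=1$, so that Lemma 4.1,(4) manufactures the needed complemented element. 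What the paper's route buys is brevity, at the cost of relying on two nontrivial imported results (the retraction characterization and the classification of clopens in $Spec_Z(A)$); what your route buys is self-containedness --- in effect you re-derive, on the maximal spectrum, the relevant instance of Lemma 24 of \cite{Cheptea1} --- and you also explicitly verify the easy inclusion (that each $D(e)\cap Max(A)$ is clopen), which the paper leaves tacit. Your ``Booleanization'' step, producing elements of $B(A)$ from a normality decomposition via Lemma 4.1,(4), is the same mechanism the paper exploits elsewhere (e.g.\ in the proof of Theorem 7.13), so it fits naturally into the paper's toolkit even though this particular proof does not use it.
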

\begin{proof}
Let $K$ be a clopen subset of $Max_Z(A)$. If $\gamma: Spec_Z(A)\rightarrow Max_Z(A)$ is the continuous retract of the inclusion $Max_Z(A)\subseteq Spec_Z(A)$, then  $L = \gamma^{-1}(K)$ is a clopen set in $Spec_Z(A)$. By Lemma 24 of \cite{Cheptea1} there exists an element $e\in B(A)$ such that $L = D(e)$. Thus $K$ = $\gamma(D(e))$ = $\{\gamma(p)| p\in Spec(A),  e\not\leq p \}$. It is easy to see that for all $p\in Spec(A)$ we have $e\leq p$ iff $e\leq \gamma(p)$, hence $K$ =  $\{\gamma(p)| p\in Spec(A), e\not\leq \gamma(p)\}$ = $D(e)\bigcap Max(A)$.
\end{proof}

The following theorem contains some conditions that characterize the $B$ - normal algebras.

\begin{teorema}
If $A$ is a coherent quantale then the following are equivalent:
\usecounter{nr}
\begin{list}{(\arabic{nr})}{\usecounter{nr}}
\item $A$ is $B$ - normal;
\item $R(A)$ is a $B$ - normal frame;
\item The reticulation $L(A)$ is a $B$ - normal lattice;
\item For all distinct $p,q\in Max(A)$ there exists $e\in B(A)$ such that $e\leq p$ and $\neg e \leq q$;
\item $A$ is a normal quantale and $Max_Z(A)$ is a zero - dimensional space;
\item $A$ is a normal quantale and $Max_Z(A)$ is a Boolean space;
\item The family  $(D(e)\bigcap Max(A))_{e\in B(A)}$ is a basis of open sets for $Max_Z(A)$;
\item The function $s_A|_{Max(A)}: Max_Z(A)\rightarrow Sp(A)$ is a homeomorphism.
\end{list}

\end{teorema}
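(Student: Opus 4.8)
The plan is to organize the eight conditions around a single core equivalence $(1)\Leftrightarrow(4)$, to transport $B$-normality of $A$ through the reticulation to obtain $(2)$ and $(3)$, and to read off the topological statements $(5)$--$(8)$ once normality of $A$ and the Boolean character of $Max_Z(A)$ are available.

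First I would settle $(1)\Leftrightarrow(2)\Leftrightarrow(3)$ by the reticulation. Applying $\lambda_A$ to the defining inequalities of $B$-normality and using Lemma 3.2 (parts (2),(3),(5)) together with the Boolean isomorphism $\lambda_A|_{B(A)}:B(A)\to B(L(A))$ of Corollary 4.4, the $B$-normality of $A$ translates into the $B$-normality of $L(A)$: surjectivity of $\lambda_A|_{B(A)}$ lets me pull chosen complemented elements of $L(A)$ back into $B(A)$, and idempotency of complemented elements (Lemma 4.1(2)) turns $\lambda_A(ef)=0$ into $ef=0$. This yields $(1)\Leftrightarrow(3)$. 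Since $R(A)$ is itself a coherent frame (Lemma 2.5) with $L(R(A))\cong L(A)$, applying the same equivalence to $R(A)$ gives $(2)\Leftrightarrow(3)$.

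Next is the central equivalence $(1)\Leftrightarrow(4)$. For $(1)\Rightarrow(4)$, given distinct $p,q\in Max(A)$ we have $p\lor q=1$, so there are $c,d\in K(A)$ with $c\leq p$, $d\leq q$, $c\lor d=1$; feeding $c,d$ into $B$-normality produces $e,f\in B(A)$ with $c\lor e=d\lor f=1$ and $ef=0$. From $c\lor e=1$ and maximality of $p$ one gets $e\not\leq p$, whence $f\leq p$ (as $ef=0\leq p$ and $p$ is $m$-prime); from $d\lor f=1$ one gets $f\not\leq q$, so $\neg f\leq q$, and $f$ is the required separator. The hard direction is $(4)\Rightarrow(1)$, which I expect to be the main obstacle. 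I would first show $(4)$ forces $A$ to be normal via Proposition 7.7(4): if a prime $p$ sits below two distinct maximal elements $m_1,m_2$, a separator $e$ with $e\leq m_1$, $\neg e\leq m_2$ would, after the dichotomy $e\leq p$ or $\neg e\leq p$, collapse one of $m_1,m_2$ to $1$. With normality in hand, Lemma 7.12 identifies the clopen subsets of $Max_Z(A)$ exactly as the sets $D(e)\cap Max(A)$ with $e\in B(A)$; and $(4)$ makes $Max_Z(A)$ Hausdorff, hence (being compact) a Boolean space. Then for $c\lor d=1$ the disjoint closed sets $V(c)\cap Max(A)$ and $V(d)\cap Max(A)$ can be separated by a clopen set, which by Lemma 7.12 has the form $D(e)\cap Max(A)$; unwinding the two inclusions gives $c\lor e=1$ and $d\lor\neg e=1$, so $e,\neg e\in B(A)$ witness $B$-normality. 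The delicate point is precisely that this compactness-plus-separation argument must be bootstrapped through normality before Lemma 7.12 can be invoked.

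Finally the topological conditions. Normality of $A$ under $(1)$ is immediate since the complemented witnesses lie in $K(A)$ (Lemma 4.2), so $Max_Z(A)$ is compact Hausdorff (Proposition 7.7(3)); then $(1)\Rightarrow(4)$ makes it zero-dimensional, hence Boolean, and as "zero-dimensional" and "Boolean" coincide for a compact Hausdorff space this gives $(5)\Leftrightarrow(6)$ and $(1)\Rightarrow(6)$. Lemma 7.12 converts "Boolean" into the statement that the $D(e)\cap Max(A)$ form a basis, i.e. $(6)\Rightarrow(7)$, while conversely $(7)\Rightarrow(4)$ because $Max_Z(A)$ is always $T_1$ (point closures are singletons by maximality), so a basic clopen neighbourhood of $q$ missing $p$ supplies the separator. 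For $(8)$ I would use that $s_A:Spec(A)\to Sp(A)$ is a surjective, spectrally continuous map onto a Boolean space (Proposition 5.10) with each $s_A(p)$ max-regular (Lemma 5.9): monotonicity plus maximality shows $s_A|_{Max(A)}$ is still surjective, while $(4)$ gives injectivity, since a separator $e\leq p$, $\neg e\leq q$ forces $e\lor\neg e\leq s_A(q)$ if $s_A(p)=s_A(q)$, a contradiction. As $Max_Z(A)$ is compact and $Sp(A)$ Hausdorff, Lemma 6.7 upgrades this continuous bijection to a homeomorphism, giving $(1)\Rightarrow(8)$; and for $(8)\Rightarrow(4)$ one separates $s_A(p)\neq s_A(q)$ by a basic clopen $U(e)$ of $Sp(A)$ and transports it back through $s_A(p)\leq p$ to recover $e\leq p$ and $\neg e\leq q$. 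Since every condition now implies $(1)$ or $(4)$ and these two are equivalent, the web of implications closes.
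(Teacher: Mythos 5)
Your proposal is correct, and it takes a genuinely different --- and in fact more complete --- route than the paper. The paper cites \cite{Cheptea1} for the equivalence of $(1),(2),(3),(5),(6)$ and then proves only the implications $(1)\Rightarrow(4)$, $(6)\Rightarrow(7)$, $(7)\Rightarrow(4)$, $(1)\Rightarrow(8)$, $(8)\Rightarrow(6)$; in that printed graph no implication ever flows \emph{out} of $(4)$, so the cycle through $(4)$ and $(7)$ is never closed. The step you single out as the main obstacle, $(4)\Rightarrow(1)$, is exactly what the paper omits, and your argument for it is sound: $(4)$ forces normality via Proposition 7.7(4), normality unlocks Lemma 7.12, the clopen separation of points coming from $(4)$ plus compactness makes $Max_Z(A)$ Boolean, and then separating the disjoint closed sets $V(c)\cap Max(A)$ and $V(d)\cap Max(A)$ by a clopen of the form $D(e)\cap Max(A)$ unwinds to $c\lor e=1$, $d\lor\neg e=1$, $e\cdot\neg e=0$. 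Beyond this, you reprove $(1)\Leftrightarrow(2)\Leftrightarrow(3)$ by transporting witnesses through $\lambda_A$ and Corollary 4.4 (plus $L(R(A))\cong L(A)$) rather than citing; you route $(8)$ back through $(4)$ using the clopen basis $(U(e))_{e\in B(A)}$ of $Sp(A)$, which is simpler than the paper's $(8)\Rightarrow(6)$ via the retraction $(s_A|_{Max(A)})^{-1}\circ s_A$; and your surjectivity argument for $s_A|_{Max(A)}$ (any maximal element above $p$ has the same image, by max-regularity) avoids the paper's appeal to the Gelfand retraction $\gamma$. The paper's approach buys brevity by outsourcing half the equivalences; yours buys a self-contained proof that actually closes the web of implications.

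Two points to tighten. First, ``Hausdorff, hence (being compact) a Boolean space'' is not a valid inference in general; what $(4)$ actually gives you is separation of distinct points by the \emph{clopen} sets $D(e)\cap Max(A)$ and $D(\neg e)\cap Max(A)$, and a compact space whose points are separated by clopens is Boolean --- state it that way. Second, you use compactness of $Max_Z(A)$ without justification; it holds for any coherent quantale because $1\in K(A)$: if $Max(A)\subseteq\bigcup_i D(c_i)$ then $\bigvee_i c_i=1$ (otherwise some maximal element lies above it), and compactness of $1$ yields a finite subcover.
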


\begin{proof}
The equivalence of the properties $(1), (2), (3), (5)$ and $(6)$ was established in \cite{Cheptea1}, hence it remains to prove the equivalence of the other conditions.

$(1)\Rightarrow(4)$
Let $p,q$ be two distinct maximal elements of $A$, hence $p\lor q = 1$. Since $A$ is $B$ - normal, there exist $e,f\in B(A)$   such that $p\lor f$  = $q\lor e$ = $1$ and $ef = 0$. From $p\lor f$  =  $q\lor e$  = $1$  one gets $f\not\leq p$, $e\not\leq q$, hence $\neg f \leq p$ and  $\neg e\leq q$.  The equality $ef = 0$ implies $e\leq \neg f \leq p$.

$(6)\Rightarrow(7)$
Since $Max_Z(A)$ is a Boolean space, the family of its clopen subsets is a basis of open sets. By Lemma 7.12, the family $(D(e)\bigcap Max(A))_{e\in B(A)}$ is exactly this basis of open sets for $Max_Z(A)$.

$(7)\Rightarrow(4)$
Let $p,q$ be two distinct maximal elements of $A$. We observe that $U$ = $Spec(A) - \{q\}$ = $Spec(A) - V(q)$ = $D(q)$ is open in $Spec_Z(A)$, so $U\bigcap Max(A)$ is an open subset of $Max_Z(A)$ that contains $p$. In accordance to the hypothesis, there exists $e\in B(A)$ such that $p \in D(e)\bigcap Max(A)\subseteq U\bigcap Max(A)$. It follows that $e\not\leq p$ and $e\leq q$, so $\neg e\leq p$ and $e\leq q$.

$(1)\Rightarrow(8)$ Assume that $A$ is $B$ - normal. In accordance to Proposition 5.9,  $s_A|_{Max(A)}: Spec_Z(A)\rightarrow Sp(A)$ is a surjective continuous map. We shall prove that the restriction of $s_A$ to $Max(A)$ is injective. Let $m,n\in Max(A)$ such that $m\neq n$. We know that the conditions $(1)$ and $(4)$ are equivalent, so there exists $e\in B(A)$ such that $e\leq m$, $\neg e \leq n$, hence $e\leq s_A(m)$ and $e\not\leq s_A(n)$. It follows that $s_A(m)\neq s_A(n)$, so $s_A$ is injective.

In order to prove that $s_A|_{Max(A)}: Max_Z(A)\rightarrow Sp(A)$ is surjective assume that $q\in Sp(A)$, hence $q = s_A(p)$, for some $p\in Spec(A)$. Let $\gamma(p)$ be the unique maximal element of $A$ such that $p\leq \gamma(p)$. Thus $q$ = $s_A(p)\leq s_A(\gamma(p))$, so $q = \gamma(p)$, because $q$ and $\gamma(p)$ are max- regular elements. We know already that $(1)$ and $(6)$ are equivalent, so $Max(A)$ is a Boolean space. By Proposition 5.9, $Sp(A)$ is also a Boolean space. Therefore, by applying Lemma 6.8 it follows that $s_A|_{Max(A)}: Max_Z(A)\rightarrow Sp(A)$ is a homeomorphism.

$(8)\Rightarrow(6)$
Taking into account the hypothesis $(8)$, it follows that the function $(s_A|_{Max(A)})^{-1}\circ s_A:Spec_Z(A)\rightarrow Max_Z(A)$ is a continuous retraction of the inclusion $Max_Z(A)\subset Spec_Z(A)$, so $A$ is a normal quantale. Moreover, by $(8)$ and Proposition 5.9 it follows that $Max_Z(A)$ is a Boolean space.

\end{proof}

\begin{corolar}
Let $A$ be a normal quantale. If $[r(A))_A$ is a hyperarchimedean quantale then $A$ is $B$ - normal.
\end{corolar}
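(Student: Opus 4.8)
The plan is to reduce the claim to a purely topological statement about $Max_Z(A)$ and then invoke the characterization of $B$-normality in Theorem 7.13. Since $A$ is assumed to be normal, condition (6) of Theorem 7.13 shows that it suffices to prove that $Max_Z(A)$ is a Boolean space; the normality hypothesis then supplies the remaining half of that condition for free.

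First I would exploit the hypothesis that $[r(A))_A$ is hyperarchimedean. By Proposition 7.5, this is equivalent to $Max_F(A)$ being compact, and it is also equivalent to the coincidence of the two topological spaces $Max_Z(A)$ and $Max_F(A)$. Thus from the single assumption I extract two facts simultaneously: $Max_F(A)$ is compact, and $Max_Z(A) = Max_F(A)$ as topological spaces.

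Next I would record the unconditional structural properties of the flat maximal spectrum. By Proposition 7.2, $Max_F(A)$ is always Hausdorff and zero-dimensional. Combining this with the compactness just obtained, $Max_F(A)$ is a compact, Hausdorff, zero-dimensional space, i.e.\ a Boolean space. Transporting this conclusion along the equality $Max_Z(A) = Max_F(A)$ yields that $Max_Z(A)$ is itself a Boolean space.

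Finally, since $A$ is normal and $Max_Z(A)$ is Boolean, condition (6) of Theorem 7.13 holds, so by the implication $(6)\Rightarrow(1)$ the quantale $A$ is $B$-normal. I do not expect a genuine obstacle here: the only point requiring care is to read off both roles of the hypothesis (the compactness of $Max_F(A)$ and the coincidence $Max_Z(A) = Max_F(A)$) from Proposition 7.5, after which the argument is a direct chain through Propositions 7.2 and 7.5 and Theorem 7.13.
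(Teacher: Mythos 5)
Your proof is correct and follows essentially the same route as the paper: both extract $Max_Z(A) = Max_F(A)$ from the hyperarchimedean hypothesis via Proposition 7.5, transfer the topological properties of $Max_F(A)$ given by Proposition 7.2 to $Max_Z(A)$, and conclude via Theorem 7.13. The only (immaterial) difference is that the paper stops at zero-dimensionality and invokes condition (5) of Theorem 7.13, whereas you additionally use compactness to get a Boolean space and invoke condition (6).
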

\begin{proof}
Assume that $[r(A))_A$ is hyperarchimedean, so by Proposition 4.5 we have $Max_F(A)$ = $Max_Z(A)$. Therefore by using Proposition 7.2 it follows that $Max_Z(A)$ is zero - dimensional. Applying Theorem 7.13,(5) one gets that $A$ is a $B$ - normal quantale.
\end{proof}

\section{Flat topology on the minimal prime spectrum}

If $A$ is a quantale then we denote by $Min(A)$ the set of minimal $m$ - prime elements of $A$; $Min(A)$ is called the minimal prime spectrum of $A$. If $1\in K(A)$ then for any $p\in Spec(A)$ there exists $q\in Min(A)$ such that $q\leq p$. For any bounded distributive lattice $L$ we denote by $Min_{Id}(L)$ the set of minimal prime ideals in $L$; $Min_{Id}(L)$ is the minimal prime spectrum of the frame $Id(L)$.

We will obtain a description of the minimal $m$ - prime elements of a coherent quantale $A$ by using the reticulation. First we remember from \cite{Simmons} the following result.

\begin{lema}
A prime ideal $P$ of a bounded distributive lattice $L$ is minimal prime if and only if for all $x\in P$ we have $Ann(x)\not\subseteq P$.
\end{lema}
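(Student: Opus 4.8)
The plan is to prove the two implications separately, noting that one direction is a short argument using primeness while the other requires a filter–separation argument.

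For the easy direction ($\Leftarrow$), I would assume $Ann(x)\not\subseteq P$ for every $x\in P$ and take an arbitrary prime ideal $Q$ with $Q\subseteq P$, aiming to show $Q=P$. Picking any $x\in P$, the hypothesis supplies some $y\in Ann(x)$ with $y\notin P$, hence $y\notin Q$; from $x\land y=0\in Q$ together with the primeness of $Q$ I would deduce $x\in Q$. This gives $P\subseteq Q$, so $Q=P$ and $P$ is minimal prime.

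For the harder direction ($\Rightarrow$), I would argue by contraposition: assuming that some $x_0\in P$ satisfies $Ann(x_0)\subseteq P$, I would build a prime ideal strictly below $P$. Working with the prime filter $F=L\setminus P$, I would form the filter $G$ generated by $F\cup\{x_0\}$, whose elements are exactly those lying above $f\land x_0$ for some $f\in F$. The crucial point is that $G$ is \emph{proper} precisely because $Ann(x_0)\subseteq P$: if $f\land x_0=0$ for some $f\in F$, then $f\in Ann(x_0)\subseteq P$, contradicting $f\in F=L\setminus P$; hence $0\notin G$.

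With $0\notin G$ established, I would invoke the prime filter separation (Birkhoff--Stone) theorem for bounded distributive lattices to obtain a prime filter $G'\supseteq G$ with $0\notin G'$, and set $Q=L\setminus G'$. Then $Q$ is a prime ideal, the inclusion $F\subseteq G\subseteq G'$ forces $Q\subseteq P$, while $x_0\in G'$ yields $x_0\notin Q$ even though $x_0\in P$; thus $Q\subsetneq P$, contradicting the minimality of $P$. I expect the main obstacle to be this forward direction: it rests on the separation theorem and on the verification that $Ann(x_0)\subseteq P$ is exactly the condition keeping the enlarged filter proper, after which the conclusion follows by the order-reversing correspondence between prime filters and prime ideals.
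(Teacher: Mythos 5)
Your proof is correct. Note that the paper itself offers no proof of this statement: it is Lemma 8.1, quoted directly from Simmons's paper on reticulated rings, so there is no in-paper argument to compare against. Both of your directions are sound: the easy implication correctly uses primeness of the smaller ideal $Q$ to force $P\subseteq Q$, and in the contrapositive direction the key observation — that the filter generated by $(L\setminus P)\cup\{x_0\}$ is proper exactly because $Ann(x_0)\subseteq P$ — is precisely what makes the Birkhoff--Stone separation theorem applicable, after which the complement of the resulting prime filter is a prime ideal strictly below $P$. This is the standard argument found in the cited literature, and your write-up of it is complete.
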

Let us fix a coherent quantale $A$.

\begin{lema}
If $c\in K(A)$ and $p\in Spec(A)$ then $Ann(\lambda_A(c))\subseteq p^{\ast}$ if and only if $c\rightarrow \rho(0)\leq p$.
\end{lema}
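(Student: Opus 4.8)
The plan is to reduce the statement to Proposition 4.5, which already identifies the annihilator $Ann(a^\ast)$ with $(a \rightarrow \rho(0))^\ast$ for an arbitrary element $a$, and then to invoke the order-theoretic behaviour of the maps $\ast$ and $_\ast$ from Section 3.

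First I would rewrite $Ann(\lambda_A(c))$ as the annihilator of an ideal. Since $c$ is compact, Lemma 3.3(5) gives $c^\ast = (\lambda_A(c)]$, and the annihilator of an element coincides with the annihilator of the principal ideal it generates, so $Ann(\lambda_A(c)) = Ann(c^\ast)$. Applying Proposition 4.5 to the element $c$ yields $Ann(c^\ast) = (c \rightarrow \rho(0))^\ast$. Consequently the hypothesis $Ann(\lambda_A(c)) \subseteq p^\ast$ is exactly $(c \rightarrow \rho(0))^\ast \subseteq p^\ast$, and the lemma is reduced to proving the equivalence $a^\ast \subseteq p^\ast \Leftrightarrow a \leq p$ for the element $a = c \rightarrow \rho(0)$ and $p \in Spec(A)$.

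For that equivalence I would argue as follows. The implication $a \leq p \Rightarrow a^\ast \subseteq p^\ast$ is immediate from the definition $a^\ast = \{\lambda_A(d) | d \in K(A),\ d \leq a\}$, because every compact $d \leq a$ satisfies $d \leq p$. For the converse, I would apply $_\ast$ to the inclusion $a^\ast \subseteq p^\ast$: both $\ast$ and $_\ast$ are monotone straight from their definitions, so using $(a^\ast)_\ast = \rho(a)$ from Lemma 3.4 and $(p^\ast)_\ast = p$ from Lemma 3.3(3) I obtain $\rho(a) \leq p$, and then $a \leq \rho(a) \leq p$ by Lemma 2.2(1). Alternatively, the converse follows directly from algebraicity: writing $a = \bigvee\{d \in K(A) | d \leq a\}$ and noting that each $\lambda_A(d) \in a^\ast \subseteq p^\ast$ forces $d \leq p$ via Lemma 3.6, so the join $a$ lies below $p$.

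I do not anticipate a genuine obstacle; the single point that must be handled with care is the opening reduction, namely the identification $Ann(\lambda_A(c)) = Ann(c^\ast)$ and the correct instantiation of Proposition 4.5 at the compact element $c$. Once $Ann(\lambda_A(c))$ has been rewritten as $(c \rightarrow \rho(0))^\ast$, the remaining equivalence is a purely formal consequence of the adjunction-like relations between $\ast$ and $_\ast$ established in Section 3.
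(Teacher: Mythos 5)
Your proof is correct and takes essentially the same approach as the paper's: both identify $Ann(\lambda_A(c))$ with $(c\rightarrow\rho(0))^{\ast}$ via Proposition 4.5 and then settle the resulting equivalence $(c\rightarrow\rho(0))^{\ast}\subseteq p^{\ast}\Leftrightarrow c\rightarrow\rho(0)\leq p$ by monotonicity of the maps $a\mapsto a^{\ast}$ and $I\mapsto I_{\ast}$, together with $\rho(a)=(a^{\ast})_{\ast}$ (Lemma 3.4), $(p^{\ast})_{\ast}=p$ (Lemma 3.3) and $a\leq\rho(a)$. The only difference is cosmetic: you spell out the identification $Ann(\lambda_A(c))=Ann(c^{\ast})$ via Lemma 3.3(5), which the paper uses tacitly.
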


\begin{proof}
If $Ann(\lambda_A(c))\subseteq p^{\ast}$, then by using Lemma 3.4 and Proposition 4.5, one gets $c\rightarrow \rho(0) \leq \rho(c\rightarrow \rho(0))$ = $((c\rightarrow \rho(0))^{\ast})_{\ast}$ = $(Ann(\lambda_A(c)))_{\ast}\leq (p^{\ast})_{\ast}$ = $p$. Conversely, if $c\rightarrow \rho(0)\leq p$, then by using Proposition 4.5 we have $Ann(\lambda_A(c)) = (c\rightarrow \rho(0))^{\ast}\subseteq p^{\ast}$.
\end{proof}

\begin{propozitie}
If $p\in Spec(A)$ then the following are equivalent:
\usecounter{nr}
\begin{list}{(\arabic{nr})}{\usecounter{nr}}

\item $p\in Min(A)$;

\item $p^{\ast}\in Min_{Id}(L(A))$;

\item For all $c\in K(A))$, $\lambda_A(p)\in p^{\ast}$ implies $Ann(\lambda_A(p))\not\subseteq p^{\ast}$;

\item For all $c\in K(A))$, $c\leq p$ if and only if $c\rightarrow \rho(0)\not\leq p$.
\end{list}
\end{propozitie}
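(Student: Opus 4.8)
The plan is to establish the chain $(1)\Leftrightarrow(2)\Leftrightarrow(3)\Leftrightarrow(4)$, first transporting minimality through the reticulation and then rewriting it as an arithmetic condition inside $A$. For $(1)\Leftrightarrow(2)$ I would use Proposition 3.7: the maps $u$ and $v$ are mutually inverse and both order-preserving, so $u$ is in fact an order isomorphism between the posets $(Spec(A),\leq)$ and $(Spec_{Id}(L(A)),\subseteq)$ (order-reflection is automatic, since $u(q)\subseteq u(p)$ gives $q=v(u(q))\leq v(u(p))=p$). An order isomorphism preserves minimal elements, so $p\in Min(A)$ holds exactly when $p^{\ast}=u(p)$ is a minimal prime ideal of $L(A)$.

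For $(2)\Leftrightarrow(3)$ I would apply Lemma 8.1 to the prime ideal $P=p^{\ast}$: this says that $p^{\ast}$ is minimal prime if and only if $Ann(x)\not\subseteq p^{\ast}$ for every $x\in p^{\ast}$. Because $\lambda_A\colon K(A)\to L(A)$ is surjective, every such $x$ has the form $x=\lambda_A(c)$ with $c\in K(A)$ and $\lambda_A(c)\in p^{\ast}$, so the quantifier over $x\in p^{\ast}$ becomes a quantifier over $c\in K(A)$ with $\lambda_A(c)\in p^{\ast}$. Thus Lemma 8.1 reads verbatim as condition (3) (after the obvious typographic correction of $\lambda_A(p)$ to $\lambda_A(c)$).

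For $(3)\Leftrightarrow(4)$ I would pass condition (3) through the two dictionary lemmas. By Lemma 3.6, $\lambda_A(c)\in p^{\ast}$ is equivalent to $c\leq p$; by Lemma 8.2, $Ann(\lambda_A(c))\subseteq p^{\ast}$ is equivalent to $c\rightarrow\rho(0)\leq p$, so $Ann(\lambda_A(c))\not\subseteq p^{\ast}$ is equivalent to $c\rightarrow\rho(0)\not\leq p$. Hence (3) becomes the implication ``$c\leq p\Rightarrow c\rightarrow\rho(0)\not\leq p$'' for all $c\in K(A)$, which is precisely the forward half of the biconditional in (4). The point to notice is that the reverse half of (4) holds automatically for \emph{every} $p\in Spec(A)$: from the residuation inequality $c\,(c\rightarrow\rho(0))\leq\rho(0)\leq p$ and the $m$-primeness of $p$, one gets $c\leq p$ or $c\rightarrow\rho(0)\leq p$, i.e. $c\rightarrow\rho(0)\not\leq p$ forces $c\leq p$. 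Combining the two implications gives (4), and since (4) plainly contains its forward half, we recover (3).

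The main obstacle is conceptual rather than computational: it is to recognize that the ``if and only if'' in condition (4) is asymmetric, with one direction encoding genuine minimality and the other being a free consequence of $m$-primeness together with $\rho(0)\leq p$. Once this is isolated, everything else is bookkeeping across Lemma 3.6, Lemma 8.2, Lemma 8.1 and Proposition 3.7. A secondary point worth verifying carefully is that $u$ and $v$ are order isomorphisms, not merely homeomorphisms, so that the correspondence of minimal elements in $(1)\Leftrightarrow(2)$ is justified.
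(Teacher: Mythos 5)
Your proposal is correct and takes essentially the same route as the paper: $(1)\Leftrightarrow(2)$ via the order isomorphism $u$ of Proposition 3.7, $(2)\Leftrightarrow(3)$ via Lemma 8.1 applied to the prime ideal $p^{\ast}$, and $(3)\Leftrightarrow(4)$ via Lemmas 3.6 and 8.2. The one place you go beyond the paper's proof (which disposes of $(3)\Leftrightarrow(4)$ with just a citation of those two lemmas) is in making explicit that the reverse half of the biconditional in $(4)$ is automatic for any $p\in Spec(A)$, from $c\,(c\rightarrow\rho(0))\leq\rho(0)\leq p$ and $m$-primeness; this is a correct and worthwhile completion of a step the paper leaves implicit.
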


\begin{proof}
 $(1)\Leftrightarrow (2)$ Let us consider the order - preserving map $u: Spec(A)\rightarrow Spec_{Id}(L(A))$ defined by $u(p)$ = $p^{\ast}$, for all $p\in Spec(A)$. According to Proposition 3.7, $u$ is an order - isomorphism, hence the conditions $(1)$ and $(2)$ are equivalent.

$(2)\Leftrightarrow (3)$ By Lemma 3.3,(3), $p^{\ast}$ is a prime ideal of the lattice $L(A)$. Therefore, by using Lemma 8.1 it follows that the properties $(2)$ and $(3)$ are equivalent.

$(3)\Leftrightarrow (4)$ By Lemmas 3.6 and 8.2.

\end{proof}

\begin{corolar}

If $A$ is semiprime and $p\in Spec(A)$ then $p\in Min(A)$ if and only if for all $c\in K(A)$, $c\leq p$ implies $c^{\perp}\not\leq p$.

\end{corolar}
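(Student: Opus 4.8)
The plan is to derive the corollary directly from the equivalence $(1)\Leftrightarrow(4)$ of Proposition 8.3, specialized to the semiprime case. First I would use the hypothesis $\rho(0)=0$ to rewrite the residual appearing in condition $(4)$: for every $c\in K(A)$ the definition of the negation gives $c\rightarrow\rho(0)=c\rightarrow 0=c^{\perp}$. Hence, for a semiprime coherent quantale $A$, Proposition 8.3 asserts that $p\in Min(A)$ if and only if for all $c\in K(A)$ one has the biconditional $c\leq p \Leftrightarrow c^{\perp}\not\leq p$.

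The key observation is then that the backward half of this biconditional holds automatically for every $p\in Spec(A)$, so only the forward half carries genuine information. Indeed, from the definition of the negation we have $c\,c^{\perp}\leq 0$, hence $c\,c^{\perp}=0\leq p$; since $p$ is $m$-prime, this forces $c\leq p$ or $c^{\perp}\leq p$. Consequently $c^{\perp}\not\leq p$ implies $c\leq p$, which is precisely the implication $c^{\perp}\not\leq p\Rightarrow c\leq p$ for all $c\in K(A)$. I would emphasize that this step uses neither the minimality of $p$ nor, in fact, semiprimeness: it is valid for any $m$-prime element, because $0\leq p$ always.

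Combining the two remarks, the biconditional in condition $(4)$ collapses to its forward implication $c\leq p\Rightarrow c^{\perp}\not\leq p$. Therefore $p\in Min(A)$ if and only if for all $c\in K(A)$, $c\leq p$ implies $c^{\perp}\not\leq p$, which is exactly the statement of the corollary. There is no real obstacle in this argument: its entire content is the observation that the identity $c\,c^{\perp}=0$ renders one of the two implications in Proposition 8.3$(4)$ trivially true, leaving only the other as a substantive condition, while semiprimeness serves merely to identify $c\rightarrow\rho(0)$ with $c^{\perp}$.
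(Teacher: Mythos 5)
Your proposal is correct and follows essentially the route the paper intends: the paper states this corollary without proof as an immediate consequence of Proposition 8.3, i.e., specializing condition $(4)$ via $c\rightarrow\rho(0)=c\rightarrow 0=c^{\perp}$ when $\rho(0)=0$. Your additional observation---that $c\,c^{\perp}=0\leq p$ and $m$-primeness make the implication $c^{\perp}\not\leq p\Rightarrow c\leq p$ automatic, so the biconditional of 8.3$(4)$ collapses to the single implication in the corollary's statement---is exactly the small step needed to justify the weaker-looking formulation, and it is carried out correctly.
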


Let us denote by $Min_Z(A)$ (resp. $Min_F(A)$) the topological space obtained by restricting the topology of $Spec_Z(A)$ (resp. $Spec_F(A)$) to $Min(A)$. Similarly, for a bounded distributive lattice $L$ we denote by $Min_{Id, Z}(L)$ (resp. $Min_{Id,F}(L)$) the space obtained by restricting to $Min_{Id}(L)$ the Stone topology (resp. the flat topology) of $Spec_{Id}(L)$.

\begin{lema}
The topological spaces $Min_Z(A)$ and $Min_{Id, Z}(L)$ (resp. $Min_F(A)$ and $Min_{Id,F}(L)$)) are homeomorphic.
\end{lema}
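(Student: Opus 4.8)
The plan is to reduce everything to the two homeomorphisms $u$ and $v$ that have already been constructed on the full prime spectra, and to show that they restrict correctly to the minimal prime subspaces. Here $L$ is of course the reticulation $L(A)$, so the target spaces are $Min_{Id,Z}(L(A))$ and $Min_{Id,F}(L(A))$.

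First I would recall the maps $u : Spec(A) \to Spec_{Id}(L(A))$, $u(p)=p^{\ast}$, and $v : Spec_{Id}(L(A)) \to Spec(A)$, $v(P)=P_{\ast}$. By Proposition 3.7 these are mutually inverse homeomorphisms with respect to the spectral (Zariski/Stone) topologies, and by Proposition 5.5 the \emph{same} two maps are mutually inverse homeomorphisms with respect to the flat topologies. Thus both topological comparisons I need are already available at the level of the ambient spectra; the only remaining task is to cut them down to $Min(A)$ and $Min_{Id}(L(A))$.

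The key combinatorial input is the equivalence $(1)\Leftrightarrow(2)$ of Proposition 8.3: for $p\in Spec(A)$ one has $p\in Min(A)$ if and only if $p^{\ast}=u(p)\in Min_{Id}(L(A))$. Hence $u$ sends $Min(A)$ into $Min_{Id}(L(A))$, and since $v=u^{-1}$ the inverse map $v$ sends $Min_{Id}(L(A))$ back into $Min(A)$. Therefore $u$ restricts to a bijection $u|_{Min(A)} : Min(A)\to Min_{Id}(L(A))$ whose inverse is $v|_{Min_{Id}(L(A))}$.

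Finally I would invoke the elementary fact that the restriction of a homeomorphism to a subspace and its image is again a homeomorphism for the subspace topologies. Applying this to the spectral homeomorphism of Proposition 3.7 gives that $u|_{Min(A)} : Min_Z(A)\to Min_{Id,Z}(L(A))$ is a homeomorphism, and applying it to the flat homeomorphism of Proposition 5.5 gives that $u|_{Min(A)} : Min_F(A)\to Min_{Id,F}(L(A))$ is a homeomorphism, which is exactly the asserted statement. I do not expect a genuine obstacle here: the entire substance is packaged in Propositions 3.7, 5.5 and 8.3, and the passage to subspaces is routine point-set topology; the one point worth stating explicitly is that the minimal-prime property is preserved in \emph{both} directions, which is precisely the content of Proposition 8.3 together with $v=u^{-1}$.
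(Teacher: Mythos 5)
Your proof is correct and is precisely the argument the paper leaves implicit: Lemma 8.5 is stated without proof there, being an immediate consequence of the homeomorphisms of Propositions 3.7 and 5.5 restricted via the equivalence $(1)\Leftrightarrow(2)$ of Proposition 8.3. Your explicit check that minimality is preserved in both directions (using $v=u^{-1}$) is exactly the right packaging of these ingredients.
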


\begin{corolar}
$Min_Z(A)$ is a zero - dimensional Hausdorff space and $Min_F(A)$ is a compact $T1$  space.
\end{corolar}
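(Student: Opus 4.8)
The plan is to treat the two spaces separately, reducing each to a clopen-basis computation and, for $Min_F(A)$, to compactness of the ambient spectral space. By Lemma 8.5 one could equally transfer everything to $Min_{Id,Z}(L(A))$ and $Min_{Id,F}(L(A))$ and invoke the classical lattice results of \cite{Simmons}, but I will indicate direct arguments that stay inside $A$.

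First, for $Min_Z(A)$, I would show that each basic open set $D(c)\cap Min(A)$ with $c\in K(A)$ is in fact clopen. Its complement in $Min(A)$ is $V(c)\cap Min(A)$, and the content of Proposition 8.3,(4) is that for $p\in Min(A)$ one has $c\leq p$ iff $c\rightarrow\rho(0)\not\leq p$; hence $V(c)\cap Min(A)=D(c\rightarrow\rho(0))\cap Min(A)$. Since $D(c\rightarrow\rho(0))$ is open in $Spec_Z(A)$ (the sets $V(a)$ being closed for every $a\in A$), this complement is open in $Min_Z(A)$, so $D(c)\cap Min(A)$ is clopen and $Min_Z(A)$ is zero-dimensional. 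Hausdorffness then follows: if $p\neq q$ in $Min(A)$, minimality forbids $p\leq q$ and $q\leq p$, so there is $c\in K(A)$ with $c\leq q$ and $c\not\leq p$ (using that $A$ is algebraic); then $p\in D(c)\cap Min(A)$ and $q\in V(c)\cap Min(A)$ are disjoint clopen sets.

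For $Min_F(A)$, the $T_1$ property is immediate from Proposition 5.6: for a minimal prime $p$ the flat closure $cl_F(\{p\})=\Lambda(p)$ meets $Min(A)$ only in $p$ itself, so singletons are closed in the subspace. The compactness is the crux. The naive route, translating a basic flat cover into a family of annihilator ideals $\bigvee_i Ann(\lambda_A(c_i))$ of $L(A)$ and trying to force this join to be improper, is delicate, because an ideal contained in no minimal prime need not equal the whole lattice. I would instead exploit two features: each basic flat-open set $V(c)=\{p\in Spec(A)\mid c\leq p\}$ is an up-set for the order of $A$, and every $p\in Spec(A)$ lies above some minimal prime (valid since $1\in K(A)$). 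Consequently, if $\{V(c_i)\cap Min(A)\}_i$ covers $Min(A)$, then for an arbitrary $q\in Spec(A)$ one picks a minimal $p_0\leq q$, finds $i$ with $c_i\leq p_0\leq q$, and concludes $q\in V(c_i)$; thus $\bigcup_i V(c_i)=Spec(A)$. Compactness of $Spec_F(A)$ (a spectral space by Lemma 5.1) now yields a finite subfamily covering $Spec(A)$, which a fortiori covers $Min(A)$. Passing from arbitrary flat-open covers to basic ones is routine, so $Min_F(A)$ is compact.

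The step I expect to be the main obstacle is precisely the compactness of $Min_F(A)$: one must avoid the tempting but inconclusive annihilator-ideal computation in $L(A)$ and instead recognize that flat basic opens are up-sets, so that a cover of the minimal primes automatically propagates upward to a cover of the entire prime spectrum, where the spectral compactness of $Spec_F(A)$ can be applied.
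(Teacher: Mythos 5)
Your proof is correct, but it takes a genuinely different route from the paper. The paper's entire proof is a one-line transfer argument: by Lemma 8.5 the spaces $Min_Z(A)$ and $Min_F(A)$ are homeomorphic to $Min_{Id,Z}(L(A))$ and $Min_{Id,F}(L(A))$, and the corresponding statements for minimal prime spectra of bounded distributive lattices are quoted from Speed's paper \cite{Speed} (not \cite{Simmons}, as you suggest in passing). You instead work directly inside the quantale: zero-dimensionality of $Min_Z(A)$ via Proposition 8.3,(4), which identifies $V(c)\cap Min(A)$ with $D(c\rightarrow\rho(0))\cap Min(A)$ and hence makes the basic opens $D(c)\cap Min(A)$ clopen (note $D(a)$ is open for every $a\in A$, not just compact $a$, so this is legitimate); the $T_1$ property of $Min_F(A)$ via Proposition 5.6, since $\Lambda(p)\cap Min(A)=\{p\}$ when $p$ is minimal; and compactness via the observation that the basic flat-opens $V(c)$ are up-sets, so a cover of $Min(A)$ propagates upward (using that every $m$-prime lies above a minimal one, valid since $1\in K(A)$) to a cover of all of $Spec(A)$, where compactness of the spectral space $Spec_F(A)$ (Lemma 5.1, used in exactly this way by the paper in Theorems 6.8 and 7.9) yields a finite subcover. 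All steps check out; the up-set argument is the cleanest part of your write-up, since it bypasses the annihilator-ideal computation you rightly flag as inconclusive. What the paper's route buys is brevity and re-use of classical lattice theory through the reticulation, which is the guiding philosophy of the whole paper; what your route buys is a self-contained proof whose ingredients are all internal to the paper, with no appeal to external literature on distributive lattices, and which in particular makes visible \emph{why} $Min_F(A)$ is compact even though $Min_Z(A)$ in general is not.
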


\begin{proof}
By \cite{Speed}, $Min_{Id, Z}(L)$ is a zero - dimensional Hausdorff space and  $Min_{Id, F}(L)$ is a compact $T1$  space.
\end{proof}

\begin{propozitie}
\cite{Speed}
If $L$ is a bounded distributive lattice then the following are equivalent:
\usecounter{nr}
\begin{list}{(\arabic{nr})}{\usecounter{nr}}

\item $Min_{Id, Z}(L)$ = $Min_{Id, F}(L)$;

\item $Min_{Id, Z}(L)$ is a compact space;

\item $Min_{Id, Z}(L)$ is a Boolean space;

\item For any $s\in L$ there exists $y\in L$ such that $x\land y = 0$ and $Ann(x\lor y)$ = $\{0\}$.

\end{list}
\end{propozitie}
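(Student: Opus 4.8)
The plan is to prove the cycle $(1)\Rightarrow(2)\Rightarrow(4)\Rightarrow(1)$ together with $(2)\Leftrightarrow(3)$, leaning throughout on the facts already recorded in Corollary 8.6. Applying that corollary to the coherent frame $Id(L)$, whose reticulation is $L$ itself, I obtain that $Min_{Id,Z}(L)$ is \emph{always} a zero-dimensional Hausdorff space and that $Min_{Id,F}(L)$ is \emph{always} a compact $T_1$ space; these two facts hold independently of $(1)$--$(4)$. Write $U(x)=D_{Id}(x)\cap Min_{Id}(L)=\{P\mid x\notin P\}$ for the basic Zariski-clopen sets and $W(y)=V_{Id}(y)\cap Min_{Id}(L)=\{P\mid y\in P\}$ for the basic flat-open sets (cf. Remark 5.2). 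Two elementary facts about minimal primes will be used repeatedly. First, by Lemma 8.1 a minimal prime $P$ satisfies $c\in P$ iff $Ann(c)\not\subseteq P$; combining this with $\bigcap Min_{Id}(L)=\{0\}$ (so that every nonzero element is excluded from some minimal prime), one gets the key equivalence: \emph{an element $c$ lies in no minimal prime if and only if $Ann(c)=\{0\}$}. Second, $W(y)=\bigcup_{w\in Ann(y)}U(w)$, so the flat topology on $Min_{Id}(L)$ is coarser than the Zariski one, whence the identity $Min_{Id,Z}(L)\to Min_{Id,F}(L)$ is continuous.

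With these preliminaries in hand, $(2)\Leftrightarrow(3)$ is immediate, since a zero-dimensional Hausdorff space is Boolean exactly when it is compact; and $(1)\Rightarrow(2)$ is immediate, because $Min_{Id,F}(L)$ is compact and $(1)$ identifies it with $Min_{Id,Z}(L)$.

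For $(2)\Rightarrow(4)$, I would fix $x\in L$ and consider $\{P\mid x\in P\}$, which is the complement of the clopen set $U(x)$, hence clopen and therefore compact in $Min_{Id,Z}(L)$. The family $(U(w))_{w\in Ann(x)}$ covers this set, so compactness furnishes $w_1,\dots,w_n\in Ann(x)$ with $\{P\mid x\in P\}=U(w_1)\cup\cdots\cup U(w_n)=U(y)$, where $y=w_1\vee\cdots\vee w_n\in Ann(x)$. Thus $x\wedge y=0$, and for every minimal prime $P$ one has $x\in P$ iff $y\notin P$. To conclude I would verify $Ann(x\vee y)=\{0\}$: if $0\ne z\in Ann(x\vee y)$, choose a minimal prime $P$ with $z\notin P$; then $z\wedge x=z\wedge y=0\in P$ forces $x\in P$ and $y\in P$, contradicting the dichotomy just obtained.

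The heart of the argument, and the step I expect to be most delicate, is $(4)\Rightarrow(1)$. Given $x$, choose $y$ as in $(4)$. From $x\wedge y=0\in P$ and primeness, $x\in P$ or $y\in P$; from $Ann(x\vee y)=\{0\}$ and the key equivalence, $x\vee y$ lies in no minimal prime, so one cannot have both $x\in P$ and $y\in P$. Hence for each minimal prime exactly one of $x,y$ belongs to it, giving $U(x)=W(y)$, a basic flat-open set. Since the sets $U(x)$ form a basis for the Zariski topology on $Min_{Id}(L)$, this shows every Zariski-open set is flat-open; together with the reverse inclusion noted above it yields $Min_{Id,Z}(L)=Min_{Id,F}(L)$, i.e. $(1)$. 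The main obstacle is the careful handling of minimal primes: one must resist the tempting but false claim that a cover of $Min_{Id}(L)$ by sets $U(x_i)$ forces the ideal generated by the $x_i$ to be all of $L$, and instead route compactness through the annihilator condition via the equivalence ``$c$ belongs to no minimal prime iff $Ann(c)=\{0\}$'', which is exactly what condition $(4)$ supplies.
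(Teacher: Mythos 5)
Your proof is correct, but there is nothing in the paper to compare it against line by line: the paper does not prove Proposition 8.7 at all, it imports it from Speed's paper \cite{Speed} (just as it does the facts recorded in Corollary 8.6). What you have written is therefore a self-contained replacement for that citation, and it holds up. The cycle $(1)\Rightarrow(2)\Rightarrow(4)\Rightarrow(1)$ together with $(2)\Leftrightarrow(3)$ is sound: the identity $W(y)=\bigcup_{w\in Ann(y)}U(w)$, valid for minimal primes by Lemma 8.1, correctly yields both that the flat topology is coarser than the Zariski topology on $Min_{Id}(L)$ and that each $U(x)$ is clopen; the compactness argument in $(2)\Rightarrow(4)$ (finite subcover inside $Ann(x)$, then $y=w_1\vee\cdots\vee w_n$ and distributivity) and the dichotomy argument in $(4)\Rightarrow(1)$ (exactly one of $x,y$ lies in each minimal prime, so $U(x)=W(y)$) are both watertight. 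Your key equivalence, that $c$ lies in no minimal prime iff $Ann(c)=\{0\}$, correctly rests on $\bigcap Min_{Id}(L)=\{0\}$, which holds in every bounded distributive lattice by the prime ideal theorem plus a Zorn descent to a minimal prime. The only genuinely external inputs are the compactness of $Min_{Id,F}(L)$ (used in $(1)\Rightarrow(2)$) and the Hausdorffness of $Min_{Id,Z}(L)$ (used in $(2)\Leftrightarrow(3)$), both legitimately quoted from Corollary 8.6, whose proof states these lattice-level facts directly --- so your detour through the frame $Id(L)$ and its reticulation, while valid, is unnecessary; indeed your formula for $W(y)$ already re-derives zero-dimensionality and Hausdorffness, leaving flat-compactness as the single fact you truly import. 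One cosmetic point: the ``$s\in L$'' in item (4) of the statement is a typo for ``$x\in L$'', which you silently and correctly repaired.
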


\begin{teorema}
If $A$ is a semiprime quantale then the following are equivalent:
\usecounter{nr}
\begin{list}{(\arabic{nr})}{\usecounter{nr}}

\item $Min_Z(A)$ = $Min_F(A)$;

\item $Min_Z(A)$ is a compact space;

\item $Min_Z(A)$ is a Boolean space;

\item For any $c\in K(A)$ there exists $d\in K(A)$ such that $cd = 0$ and $(c\lor d)^{\perp}$ = $0$.

\end{list}

\end{teorema}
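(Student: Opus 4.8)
The plan is to transfer the entire statement to the reticulation $L(A)$ and then invoke the lattice-theoretic result of Speed (Proposition 8.7). By Lemma 8.5 the assignment $p \mapsto p^{\ast}$ restricts to a homeomorphism $Min_Z(A) \cong Min_{Id,Z}(L(A))$ and, by the very same underlying bijection of $Min(A)$ onto $Min_{Id}(L(A))$, to a homeomorphism $Min_F(A) \cong Min_{Id,F}(L(A))$ (recall from Propositions 3.7 and 5.5 that $u\colon p\mapsto p^{\ast}$ is a homeomorphism for both the spectral and the flat topology). Consequently condition (1) for $A$ holds if and only if $Min_{Id,Z}(L(A)) = Min_{Id,F}(L(A))$, while conditions (2) and (3) for $A$ are equivalent, respectively, to the compactness and to the Booleanness of $Min_{Id,Z}(L(A))$. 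Thus, by Proposition 8.7 applied to $L = L(A)$, the assertions (1), (2) and (3) of the theorem are mutually equivalent and equivalent to condition (4) of that proposition: for every $x \in L(A)$ there is $y \in L(A)$ with $x \land y = 0$ and $Ann(x \lor y) = \{0\}$. Hence it remains only to show that this condition is equivalent to condition (4) of the theorem.

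For the translation of condition (4) I would use the surjectivity of $\lambda_A$ (Definition 3.1) to write an arbitrary $x \in L(A)$ as $x = \lambda_A(c)$ with $c \in K(A)$, and likewise $y = \lambda_A(d)$ with $d \in K(A)$; under this identification the two quantifiers over $L(A)$ become quantifiers over $K(A)$. Two computations, both using semiprimeness, are then required. First, by Definition 3.1(2) we have $x \land y = \lambda_A(c) \land \lambda_A(d) = \lambda_A(cd)$, and since $A$ is semiprime Lemma 3.2(9) and (4) give $\lambda_A(cd) = 0$ iff $cd = 0$; so $x \land y = 0$ iff $cd = 0$. Second, by Lemma 3.2(2) we have $x \lor y = \lambda_A(c \lor d)$, and by Lemma 3.3(5) the principal ideal $(\lambda_A(c\lor d)] = (c\lor d)^{\ast}$, so the annihilator of the element $\lambda_A(c\lor d)$ is the annihilator of the ideal $(c\lor d)^{\ast}$. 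Applying the semiprime branch of Proposition 4.5 yields $Ann(x \lor y) = Ann\big((c\lor d)^{\ast}\big) = \big((c\lor d)^{\perp}\big)^{\ast}$. Finally, writing $a = (c\lor d)^{\perp}$, Lemma 3.4 (via $a^{\ast} = (\rho(a))^{\ast}$ and $\rho(a) = (a^{\ast})_{\ast}$) together with semiprimeness shows $a^{\ast} = \{0\}$ iff $\rho(a) = 0$ iff $a = 0$. Combining the two computations, "$x \land y = 0$ and $Ann(x \lor y) = \{0\}$" is literally "$cd = 0$ and $(c\lor d)^{\perp} = 0$", which is exactly condition (4) of the theorem.

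The conceptual content here is light: it is a dictionary argument through the reticulation, with all the topological equivalences supplied by Lemma 8.5 and Proposition 8.7. I expect the only delicate point to be the annihilator identity of the second computation — one must keep straight that the annihilator of a compact element's image equals the annihilator of the principal ideal it generates, invoke the correct (semiprime) case of Proposition 4.5 to pass from $\lambda_A(c\lor d)$ to $(c\lor d)^{\perp}$, and then use that in a semiprime quantale $a^{\ast} = \{0\}$ is equivalent to $a = 0$. This bookkeeping, rather than any genuinely new idea, should be the main obstacle.
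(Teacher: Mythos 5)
Your proposal is correct and follows essentially the same route as the paper: transfer (1)--(3) to $L(A)$ through the homeomorphisms of Lemma 8.5, invoke Speed's Proposition 8.7, and translate its condition (4) back and forth along $\lambda_A$ using semiprimeness (Lemma 3.2(4),(9)), Lemma 3.3(5), and the semiprime case of Proposition 4.5. The paper merely splits your single biconditional translation into the two implications $(1)\Rightarrow(4)$ and $(4)\Rightarrow(1)$, verifying $((c\lor d)^{\perp})^{\ast}=\{0\}$ iff $(c\lor d)^{\perp}=0$ by a direct compact-element argument rather than via Lemma 3.4, which is an immaterial difference.
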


\begin{proof}

$(1)\Leftrightarrow (2)\Leftrightarrow (3)$
These equivalences follow by applying Lemma 8.5 and taking in account the equivalence of the assertions (i), (ii) and (iii) from Proposition 8.7.

$(1)\Rightarrow (2)$
Assume that $c\in K(A)$. By Lemma 8.5 we have $Min_{Id, Z}(L(A))$ = $Min_{Id, F}(L(A))$, therefore by applying Proposition 8.7 to the lattice $L(A)$ there exists $d\in K(A)$ such that $\lambda_A(cd)$ = $\lambda_A(c)\land \lambda_A(d) = 0$ and $Ann(\lambda_A(c\lor d))$ = $Ann(\lambda_A(c)\lor \lambda_A(d))$ = $\{0\}$. The quantale $A$ is semiprime, hence by using Lemma 3.2,(9) and Proposition 4.5, one obtains $cd = 0$ and $((c\lor d)^{\perp})^{\ast}$ = $\{0\}$. If $z\in K(A)$ and $z\leq (c\lor d)^{\perp}$ then $\lambda_A(z)\in ((c\lor d)^{\perp})^{\ast}$, so $\lambda_A(z) = 0$. Since $A$ is semiprime it follows that $z =0$ (cf. Lemma 3.2,(9)). We conclude that $(c\lor d)^{\perp}$ = $0$.

$(4)\Rightarrow (1)$
Assume that $x\in L(A)$ hence $x = \lambda_A(c)$ for some $c\in K(A)$. Then there exists $d\in K(A)$ such that $cd = 0$ and $(c\lor d)^{\perp}$ = $0$. Denoting $y = \lambda_A(d)$ we obtain $x\land y$ = $\lambda_A(cd)$ = $0$ and $Ann(x\lor y)$ = $Ann(\lambda_A(c\lor d))$ = $((c\lor d)^{\perp})^{\ast}$ = $0$. By Proposition 8.7 we have $Min_{Id, Z}(L(A))$ = $Min_{Id, F}(L(A))$, hence $Min_Z(A)$ = $Min_F(A)$.

\end{proof}

Recall from \cite{Aghajani} that an $mp$ - ring is a commutative ring $R$ with the property that each prime ideal of $R$ contains a unique minimal prime ideal. Let us extend this notion to quantales: a quantale $A$ is an $mp$ - quantale if for any $p\in Spec(A)$ there exist a unique $q\in Min(A)$ such that $q\leq p$. An $mp$ - frame is an $mp$ - quantale wich is a frame. We remark that a ring $R$ is an $mp$ - ring if and only if the quantale $Id(R)$ of ideals of $R$ is an $mp$ - quantale.

The $mp$ - quantales can be related to the conormal lattices, introduced by Cornish in \cite{Cornish} under the name of "normal lattices". According to \cite{Simmons},\cite{Johnstone}, a conormal lattice is a bounded distributive lattice $L$ such that for all $x,y\in L$ with $x\land y = 0$ there exist $u,v\in L$ having the properties $x\land u$ = $y\land v$ = $0$ and $u\lor v$ = $1$. In \cite{Cornish} Cornish obtained several characterizations of the conormal lattices.

\begin{propozitie}
\cite{Cornish}
A bounded distributive lattice $L$ is conormal if and only if any prime ideal of $L$ contains a unique minimal prime ideal.
\end{propozitie}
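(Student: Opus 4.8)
The plan is to prove both implications, using throughout the annihilator characterisation of minimal primes from Lemma 8.1: a prime ideal $M$ is minimal exactly when $Ann(x) \not\subseteq M$ for every $x \in M$. Existence of a minimal prime below an arbitrary prime $P$ is automatic, since the intersection of a chain of prime ideals contained in $P$ is again a prime ideal contained in $P$, so a downward Zorn argument produces one; hence in both directions only \emph{uniqueness} is really at stake. It is convenient to first record that conormality of $L$ is equivalent to the statement that $Ann(x) \lor Ann(y) = L$ whenever $x \land y = 0$: indeed $u \in Ann(x)$, $v \in Ann(y)$ with $u \lor v = 1$ is precisely the witness demanded by the definition, and $u \lor v = 1$ means exactly $1 \in Ann(x) \lor Ann(y)$.

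For the forward implication I would argue directly. Suppose $L$ is conormal and a prime ideal $P$ contains two minimal primes $M_1$ and $M_2$. Taking $x \in M_1$, Lemma 8.1 gives $y \notin M_1$ with $x \land y = 0$, and conormality produces $u, v$ with $x \land u = y \land v = 0$ and $u \lor v = 1$. From $y \land v = 0 \in M_1$ and $y \notin M_1$ one gets $v \in M_1$, whence $u \notin M_1$ (otherwise $1 = u \lor v \in M_1$). Now if $x \notin M_2$, then $x \land u = 0 \in M_2$ forces $u \in M_2 \subseteq P$; together with $v \in M_1 \subseteq P$ this yields $1 = u \lor v \in P$, contradicting properness of $P$. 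Hence $x \in M_2$, so $M_1 \subseteq M_2$ and, by minimality, $M_1 = M_2$.

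For the converse I would use the contrapositive together with the reformulation above. If $L$ is not conormal, choose $x \land y = 0$ with $Ann(x) \lor Ann(y) \neq L$ and extend this proper ideal to a prime ideal $P$ by the prime ideal theorem; then $Ann(x), Ann(y) \subseteq P$ and $x, y \in P$. I would then manufacture two distinct minimal primes below $P$. To obtain a minimal prime $M_1 \subseteq P$ with $x \notin M_1$, consider the filter $F$ generated by $x$ together with $L \setminus P$: properness of $F$ (that is, $0 \notin F$) holds precisely because $Ann(x) \subseteq P$. Any prime ideal minimal among those disjoint from $F$ is contained in $P$, avoids $x$, and — crucially — is a genuine minimal prime of $L$, since every prime below it is also disjoint from $F$. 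Symmetrically one gets a minimal prime $M_2 \subseteq P$ with $y \notin M_2$. Finally $M_1 \neq M_2$, for if they coincided with some $M$ then $x, y \notin M$ would force $x \land y \notin M$, contradicting $x \land y = 0 \in M$. Thus two distinct minimal primes lie below $P$, contradicting uniqueness.

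The computation in the forward direction is routine; the real work is the converse, specifically the construction guaranteeing a minimal prime of $L$ that avoids a prescribed $x$ while remaining inside $P$. The delicate points are verifying that $F$ is proper exactly when $Ann(x) \subseteq P$, and noticing that minimality among primes disjoint from $F$ upgrades to absolute minimality in $L$. Both rest on Lemma 8.1 and on the standard facts that a chain of prime ideals has prime intersection and that a proper ideal disjoint from a filter extends to a prime ideal with the same disjointness.
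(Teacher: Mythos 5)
Your proof is correct, but note that the paper itself offers no argument for this statement: Proposition 8.9 is simply quoted from Cornish's paper on normal lattices, so there is no internal proof to compare against. Your two directions are both sound. The forward implication is a clean direct argument from the annihilator characterization of minimal primes (Lemma 8.1 in the paper) plus the reformulation of conormality as $Ann(x)\lor Ann(y)=L$ whenever $x\land y=0$; the small step ``whence $u\notin M_1$'' is actually never used, but it does no harm. The converse is where the content lies, and your construction works: properness of the filter generated by $x$ and $L\setminus P$ is exactly equivalent to $Ann(x)\subseteq P$, a prime minimal among those disjoint from that filter exists by Zorn (chains of primes have prime intersection), and such a prime is genuinely minimal in $L$ because any prime below it inherits disjointness from the filter. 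This ``relative-minimality upgrades to absolute minimality'' observation is the key point, and you identify it explicitly. For comparison, Cornish's own published proof is usually routed through the kernel $O(P)=\{x: x\land y=0 \text{ for some } y\notin P\}$, showing that $O(P)$ is the intersection of the minimal primes below $P$ and that conormality is equivalent to $O(P)$ being prime for every prime $P$; your argument bypasses that machinery in favor of a direct separation argument, at the cost of invoking the prime ideal theorem twice per prime. Either route is legitimate, and yours is self-contained given Lemma 8.1.
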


\begin{corolar}
A coherent quantale $A$ is an $mp$ - quantale if and only if the reticulation $L(A)$ is a conormal lattice.
\end{corolar}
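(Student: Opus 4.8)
The plan is to transport the defining condition of an $mp$-quantale across the reticulation, using the order-isomorphism between the two prime spectra together with Cornish's characterization of conormal lattices (Proposition 8.9). The whole argument is a chain of equivalences, so it handles both directions of the ``if and only if'' at once.

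First I would recall from Proposition 3.7 that the maps $u:Spec(A)\rightarrow Spec_{Id}(L(A))$, $u(p)=p^{\ast}$, and $v:Spec_{Id}(L(A))\rightarrow Spec(A)$, $v(P)=P_{\ast}$, are mutually inverse order-isomorphisms. By Proposition 8.3 this isomorphism restricts to a bijection between $Min(A)$ and $Min_{Id}(L(A))$, since $p\in Min(A)$ if and only if $p^{\ast}\in Min_{Id}(L(A))$. Thus the minimal $m$-prime elements of $A$ and the minimal prime ideals of $L(A)$ correspond to one another under $u$.

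The key step is to observe that, because $u$ and $v$ preserve and reflect the order, for any $p,q\in Spec(A)$ we have $q\leq p$ if and only if $q^{\ast}\subseteq p^{\ast}$. Hence, for a fixed $p\in Spec(A)$, writing $P=p^{\ast}$, the assignment $q\mapsto q^{\ast}$ restricts to a bijection from $\{q\in Min(A)\mid q\leq p\}$ onto $\{Q\in Min_{Id}(L(A))\mid Q\subseteq P\}$. In particular, $p$ dominates exactly one minimal $m$-prime element of $A$ if and only if $P$ contains exactly one minimal prime ideal of $L(A)$. Since $u$ is a bijection of $Spec(A)$ onto $Spec_{Id}(L(A))$, the universal quantifier ``for all $p\in Spec(A)$'' corresponds exactly to ``for all prime ideals $P$ of $L(A)$''. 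Therefore $A$ is an $mp$-quantale if and only if every prime ideal of $L(A)$ contains a unique minimal prime ideal. Applying Proposition 8.9, the latter condition holds precisely when $L(A)$ is conormal, which is the desired equivalence.

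There is no serious obstacle here; the argument is a transport of structure along the order-isomorphism $u$. The only points requiring care are that the bijection of prime spectra genuinely restricts to the minimal prime spectra (guaranteed by Proposition 8.3) and that it converts the relation $q\leq p$ faithfully into inclusion of ideals, so that ``unique minimal element below $p$'' is matched exactly with ``unique minimal prime ideal contained in $p^{\ast}$''. Once these are in place, the invocation of Cornish's theorem closes the proof immediately.
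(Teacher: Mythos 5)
Your proof is correct and follows essentially the same route as the paper: transport the defining condition of an $mp$-quantale through the order-isomorphism $u_A$ of Proposition 3.7 (which automatically matches minimal $m$-prime elements with minimal prime ideals), then apply Cornish's characterization of conormal lattices (Proposition 8.9). The paper states this in two lines; you have merely spelled out the details it leaves implicit.
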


\begin{proof}
Recall that the two functions $u_A: Spec(A)\rightarrow Spec_{Id}(L(A))$ and $v_A: Spec_{Id}(L(A))\rightarrow Spec(A)$  from Proposition 3.7 are order - isomorphisms (the order is the inclusion). Thus the corollary follows by using Proposition 8.9.
\end{proof}

By \cite{Hochster} for any bounded distributive lattice $L$ there exists a commutative ring $R$ such that the lattices $L$ and $L(A)$ are isomorphic (see also the discussion from Section 3.13 of \cite{Johnstone}). Thus for any coherent quantale $A$ there exists a commutative ring R such that the lattices $L(A)$ and $L(R)$ are isomorphic (we shall identify these isomorphic lattices). Let us fix this ring $R$ associated with the quantale $A$.

In accordance with Proposition 3.7 we have the following homeomorphisms:

$Spec_Z(A) \xrightarrow[]{u_A} Spec_{Id, Z}(L(A)) \xrightarrow[]{v_R} Spec_Z(R)$ (i)

$Spec_Z(R) \xrightarrow[]{u_R} Spec_{Id, Z}(L(A)) \xrightarrow[]{v_A} Spec_Z(A)$ (ii)

By restricting these four maps to minimal prime spectra one gets the following homeomorphisms:

$Min_Z(A) \xrightarrow[]{u_A} Min_{Id, Z}(L(A)) \xrightarrow[]{v_R} Min_Z(R)$ (iii)

$Min_Z(R) \xrightarrow[]{u_R} Min_{Id, Z}(L(A)) \xrightarrow[]{v_A} Spec_Z(A)$ (iv)

(we denote the restrictions by the same symbols).

\begin{remarca}
Taking into account Proposition 5.5 it is easy to prove that the following maps: $Spec_F(A) \xrightarrow[]{u_A} Spec_{Id, F}(L(A))$, $Spec_{Id, F}(L(A)) \xrightarrow[]{v_A} Spec_F(A)$, $Spec_F(R) \xrightarrow[]{u_R} Spec_{Id, F}(L(A))$ and $Spec_{Id, F}(L(A)) \xrightarrow[]{v_R} Spec_F(R)$ are homeomorphisms.

\end{remarca}

\begin{lema}
The coherent quantale $A$ is an $mp$ - quantale if and only if $R$ is an $mp$ - ring.
\end{lema}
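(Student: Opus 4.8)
The plan is to translate both sides of the equivalence into the conormality of one and the same bounded distributive lattice, using Corollary 8.10, and then to transport this property across the fixed lattice isomorphism $L(A)\cong L(R)$. This reduces the lemma to a chain of already-established equivalences together with the observation that conormality is a lattice-theoretic invariant.

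First I would apply Corollary 8.10 directly to the coherent quantale $A$, which gives that $A$ is an $mp$-quantale if and only if the reticulation $L(A)$ is a conormal lattice. On the ring side, I would recall from the remark preceding Proposition 8.9 that a commutative ring $R$ is an $mp$-ring if and only if the quantale $Id(R)$ of its ideals is an $mp$-quantale. Since $Id(R)$ is itself a coherent quantale, Corollary 8.10 applies again and yields that $Id(R)$ is an $mp$-quantale if and only if its reticulation $L(Id(R))$ is conormal. Combining these two observations, $R$ is an $mp$-ring if and only if $L(Id(R))$ is a conormal lattice.

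Next I would connect the two reticulations. By the remark in Section $3$, $L(R)$ is isomorphic to $L(Id(R))$, and by the choice of $R$ the lattices $L(A)$ and $L(R)$ are isomorphic; hence $L(A)\cong L(Id(R))$ as bounded distributive lattices. The final step is to note that conormality is preserved under lattice isomorphisms, as it is defined purely in terms of the operations $\land$, $\lor$, $0$ and $1$. Therefore $L(A)$ is conormal if and only if $L(Id(R))$ is conormal, and chaining all the equivalences gives that $A$ is an $mp$-quantale if and only if $R$ is an $mp$-ring.

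The argument is essentially a bookkeeping of equivalences, so I do not expect a serious obstacle. The only points requiring care are that conormality is genuinely isomorphism-invariant (immediate from its definition) and that every quantale entering the argument, in particular $Id(R)$, is coherent, so that Corollary 8.10 is legitimately applicable in each step.
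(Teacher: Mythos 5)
Your proposal is correct and follows essentially the same route as the paper: the paper's (one-line) proof applies Proposition 8.9, i.e.\ Cornish's conormality criterion, to the identified reticulations $L(A)$ and $L(R)$, which is exactly your reduction of both sides to conormality of a single lattice. Your version merely unpacks the ring side more explicitly, passing through the coherent quantale $Id(R)$ and Corollary 8.10, and the isomorphism $L(R)\cong L(Id(R))$, which is the intended reading of the paper's argument.
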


\begin{proof}
We apply Proposition 8.9 to the isomorphic reticulations $L(A)$ and $L(R)$ of the quantale $A$ and the ring $R$.
\end{proof}

\begin{propozitie}
For a coherent quantale $A$ the following are equivalent:
\usecounter{nr}
\begin{list}{(\arabic{nr})}{\usecounter{nr}}

\item The inclusion $Min_F(A)\subseteq Spec_F(A)$ has a flat continuous retraction;

\item The inclusion $Min_F(R)\subseteq Spec_F(R)$ has a flat continuous retraction.

\end{list}
\end{propozitie}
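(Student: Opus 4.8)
The plan is to reduce the statement to a purely topological transport of structure across the flat homeomorphisms recorded in Remark 8.11. Write $i_A:Min_F(A)\hookrightarrow Spec_F(A)$ and $i_R:Min_F(R)\hookrightarrow Spec_F(R)$ for the two inclusions; a flat continuous retraction of $i_A$ is a continuous map $r_A:Spec_F(A)\to Min_F(A)$ with $r_A\circ i_A=\mathrm{id}_{Min_F(A)}$, and similarly for $R$. The idea is to produce from any such $r_A$ a corresponding $r_R$, and conversely, by conjugating with a homeomorphism $Spec_F(A)\to Spec_F(R)$ that carries $Min(A)$ bijectively onto $Min(R)$.

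First I would assemble this homeomorphism. By Remark 8.11 the maps $u_A:Spec_F(A)\to Spec_{Id,F}(L(A))$ and $v_R:Spec_{Id,F}(L(A))\to Spec_F(R)$ are homeomorphisms, so their composite $\varphi=v_R\circ u_A:Spec_F(A)\to Spec_F(R)$ is a flat homeomorphism. The underlying set-maps of $u_A$ and $v_R$ are the order-isomorphisms of Proposition 3.7 (together with the analogous isomorphisms for the ring $R$), and minimality of an $m$-prime element is an order-theoretic property; hence the bijection underlying $\varphi$ carries $Min(A)$ onto $Min(R)$. Since $Min_F(A)$ and $Min_F(R)$ carry the subspace topologies inherited from $Spec_F(A)$ and $Spec_F(R)$, the restriction $\psi=\varphi|_{Min(A)}:Min_F(A)\to Min_F(R)$ is again a homeomorphism, and by construction the square $\varphi\circ i_A=i_R\circ\psi$ commutes.

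With this square in hand the equivalence is formal. Given a flat continuous retraction $r_A$ of $i_A$, I would set $r_R=\psi\circ r_A\circ\varphi^{-1}:Spec_F(R)\to Min_F(R)$, which is continuous as a composite of continuous maps. From the commuting square one obtains $\varphi^{-1}\circ i_R=i_A\circ\psi^{-1}$, whence $r_R\circ i_R=\psi\circ r_A\circ i_A\circ\psi^{-1}=\psi\circ\psi^{-1}=\mathrm{id}_{Min_F(R)}$, so $r_R$ is a flat continuous retraction of $i_R$. The reverse implication is obtained symmetrically by conjugating a retraction $r_R$ with $\varphi^{-1}$ and $\psi^{-1}$. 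This proves $(1)\Leftrightarrow(2)$.

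The only genuine point to check is the compatibility encoded in the commuting square $\varphi\circ i_A=i_R\circ\psi$: namely that the flat homeomorphism $\varphi$ not only identifies the ambient spaces but also sends $Min(A)$ exactly onto $Min(R)$ and restricts to a homeomorphism of the minimal-prime subspaces. I expect this to be the main, though modest, obstacle; once it is established, the transport of the retraction is a routine diagram chase. I would justify it by appealing to the order-isomorphism statements in Proposition 3.7 (which guarantee that minimal primes correspond to minimal primes) together with the displayed homeomorphisms (iii)--(iv) for the minimal spectra, read off in the flat rather than the Stone topology by means of Remark 8.11.
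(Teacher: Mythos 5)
Your proof is correct and follows essentially the same route as the paper: the paper's one-line argument transports the retraction property through the flat homeomorphisms of Remark 8.11 to the common middle space $Spec_{Id,F}(L(A))$, which is exactly your conjugation argument with $\varphi = v_R\circ u_A$ written out in full. The only detail you elaborate beyond the paper---that these homeomorphisms carry $Min(A)$ onto $Min(R)$ because they are order-isomorphisms---is precisely what the paper's displayed restrictions (iii)--(iv) record, so there is no substantive difference.
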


\begin{proof}
According to Remark 8.11, each of these two conditions is equivalent to the following property: the inclusion $Min_{Id,F}(L(A))\subseteq Spec_{Id,F}(L(A))$ has a flat continuous retraction.
\end{proof}

\begin{teorema}
If $A$ is a coherent quantale then the following are equivalent:
\usecounter{nr}
\begin{list}{(\arabic{nr})}{\usecounter{nr}}

\item $A$ is an $mp$ - quantale;

\item For any distinct elements $p,q\in Min(A)$ we have $p\lor q = 1$;

\item $R(A)$ is an $mp$ - frame;

\item $[\rho(0))_A$ is an $mp$ - quantale;

\item The inclusion $Min_F(A)\subseteq Spec_F(A)$ has a flat continuous retraction;

\item $Spec_F(A)$ is a normal space;

\item If $p\in Min(A)$ then $V(p)$ is a closed subset of $Spec_F(A)$.

\end{list}

\end{teorema}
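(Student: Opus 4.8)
The plan is to split the seven conditions into an \emph{algebraic cluster} $(1)$--$(4)$, which I treat internally in the quantale, and a \emph{topological cluster} $(1),(5),(6),(7)$, which I transport through the reticulation to the associated ring $R$ and settle by the known theory of $mp$-rings.

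First I would establish $(1)\Leftrightarrow(2)$ directly. For $(1)\Rightarrow(2)$, if $p,q\in Min(A)$ are distinct and $p\lor q<1$, then since $1\in K(A)$ there is $r\in Spec(A)$ with $p\lor q\leq r$; thus $p\leq r$ and $q\leq r$ exhibit two distinct minimal $m$-prime elements below $r$, contradicting $(1)$. Conversely, for $(2)\Rightarrow(1)$, the existence of a minimal $m$-prime below a given $p\in Spec(A)$ is automatic, and if two distinct minimals $q_1,q_2\leq p$ existed then $q_1\lor q_2\leq p<1$ would contradict $(2)$. The equivalences $(1)\Leftrightarrow(3)$ and $(1)\Leftrightarrow(4)$ are then immediate: by Lemma 2.3 one has $Spec(A)=Spec(R(A))$ and by Lemma 2.6 one has $Spec(A)=Spec([\rho(0))_A)$, in both cases as the same poset of $m$-prime elements carrying the same order. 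Hence the sets $Min(A)$, $Min(R(A))$, $Min([\rho(0))_A)$ coincide and the ``unique minimal below each prime'' property is literally the same statement (recall $R(A)$ is a frame by Lemma 2.5).

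For the topological cluster I would work through the reticulation and the ring $R$ fixed before Lemma 8.13. By Proposition 5.5 the map $u_A$ is a flat homeomorphism $Spec_F(A)\cong Spec_{Id,F}(L(A))$, and by Remark 8.11 the analogous map for $R$ gives $Spec_{Id,F}(L(A))\cong Spec_F(R)$; composing, $Spec_F(A)\cong Spec_F(R)$. Since these flat homeomorphisms are restrictions of the order-isomorphisms of Proposition 3.7, they carry minimal $m$-prime elements to minimal prime ideals and hence restrict to homeomorphisms $Min_F(A)\cong Min_F(R)$. Under this dictionary: condition $(5)$ for $A$ is equivalent to the same condition for $R$ by Proposition 8.12; condition $(6)$ transfers because normality is a topological invariant; and for $(7)$ one notes that $u_A$ carries $V(p)$ onto the set of primes of $L(A)$ containing $p^{\ast}$, so flat-closedness of $V(p)$ is preserved by the homeomorphism.

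Finally, by Lemma 8.13 condition $(1)$ holds for $A$ if and only if $R$ is an $mp$-ring. It therefore remains to invoke the ring-theoretic equivalences of \cite{Aghajani}: for a commutative ring $R$, being an $mp$-ring is equivalent to the existence of a flat continuous retraction of $Min_F(R)\subseteq Spec_F(R)$, to normality of $Spec_F(R)$, and to flat-closedness of $V(\mathfrak{p})$ for every minimal prime $\mathfrak{p}$. Transporting these back along the homeomorphisms of the previous paragraph closes the chain $(1)\Leftrightarrow(5)\Leftrightarrow(6)\Leftrightarrow(7)$, completing the proof. The main obstacle I anticipate lies in the bookkeeping of the topological cluster: one must verify that every homeomorphism in sight genuinely restricts to the minimal prime spectra and that the retraction condition $(5)$ and the closed-set condition $(7)$ are faithfully preserved, since these are the points where a careless identification could break the argument; the ring-level equivalences themselves are then a black box imported from \cite{Aghajani}.
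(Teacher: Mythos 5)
Your handling of conditions $(1)$--$(6)$ is essentially identical to the paper's proof: the direct argument for $(1)\Leftrightarrow(2)$ (contradiction via a prime above $p\lor q$, using $1\in K(A)$), the identifications $Spec(A)=Spec(R(A))$ and $Spec(A)=Spec([\rho(0))_A)$ for $(3)$ and $(4)$, and the transfer of $(5)$ and $(6)$ through the reticulation to the associated ring $R$ (Proposition 5.5, Remark 8.11, the lemma stating that $A$ is an $mp$-quantale iff $R$ is an $mp$-ring, and the retraction-transfer proposition) followed by an appeal to Theorem 6.2 of the Aghajani--Tarizadeh paper. All of that is correct, modulo a harmless swap of labels: the $mp$-quantale/$mp$-ring equivalence is Lemma 8.12 and the retraction transfer is Proposition 8.13, not the other way around.

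Condition $(7)$ is where you genuinely diverge from the paper, and it is also the one place where your proof has a gap. The paper does \emph{not} obtain $(7)$ by transfer to rings: it proves $(2)\Leftrightarrow(7)$ directly inside the quantale. You instead assert that ``flat-closedness of $V(\mathfrak{p})$ for every minimal prime $\mathfrak{p}$'' is among the equivalences of Theorem 6.2 of the cited ring paper. But the paper invokes that theorem only for the retraction condition (its conditions (i) and (v)) and for flat normality; nothing in the paper certifies that the $V(\mathfrak{p})$-closedness criterion appears there, and your chain $(1)\Leftrightarrow(7)$ stands or falls with that unverified citation. The repair is cheap and purely internal, so you should include it rather than rely on the black box. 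For $(2)\Rightarrow(7)$: given $q\in D(p)$ (i.e. $p\not\leq q$), pick $r\in Min(A)$ with $r\leq q$; then $r\neq p$, so $p\lor r=1$ by $(2)$, and since $1\in K(A)$ there are $c,d\in K(A)$ with $c\leq p$, $d\leq r$, $c\lor d=1$; then $V(c)\cap V(d)=V(1)=\emptyset$, so $q\in V(d)\subseteq D(c)\subseteq D(p)$ with $V(d)$ flat-open, proving $D(p)$ is flat-open. For $(7)\Rightarrow(2)$: if $p,q\in Min(A)$ are distinct with $p\lor q\leq m<1$ for some $m\in Max(A)$, then $m\in V(p)$, and since $V(p)$ is flat-closed it contains $cl_F(\{m\})=\Lambda(m)$ by Proposition 5.6; from $q\leq m$ one gets $q\in\Lambda(m)\subseteq V(p)$, i.e. $p\leq q$, hence $p=q$ by minimality, a contradiction. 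With these few lines added, your proof is complete and independent of the precise list of conditions in the ring-theoretic reference.
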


\begin{proof}

$(1)\Rightarrow (2)$
Suppose that $p,q$ are distinct elements of $Min(A)$. If $p\lor q < m$ then $p\lor q\leq m$ for some $m\in Max(A)$. Then there exist two distinct $p,q\in Min(A)$ such that $p\leq m$ and $q\leq m$, contradicting that $A$ is an $mp$ - quantale. It follows that $p\lor q = 1$.

$(2)\Rightarrow (1)$
Assume by absurdum that there exist $p\in Spec(A)$ and two distinct $q,r\in Min(A)$ such that $q\leq p$ and $r\leq p$. Thus $q\lor r = 1$, hence $p = 1$, contradicting that $p\in Spec(A)$.

$(1)\Leftrightarrow (3)$
By Lemma 6 of \cite{Cheptea1} we have $Spec(A) = Spec(R(A))$, hence $Min(A) = Min(R(A))$, so the equivalence of $(1)$ and $(3)$ is immediate.

$(1)\Leftrightarrow (4)$
This equivalence follows from $Spec(A) = Spec([\rho(0))_A)$ and $Min(A) = Min([\rho(0))_A)$.

$(1)\Leftrightarrow (5)$
By using Lemma 8.12, Proposition 8.13 and the equivalence of the conditions (i), (v) from Theorem 6.2 of \cite{Aghajani} it results that the properties $(1)$ and $(5)$ are equivalent.

$(1)\Leftrightarrow (6)$
By Remark 8.11, Lemma 8.12 and Theorem 6.2 of \cite{Aghajani}, it follows that $A$ is an $mp$ - quantale iff $R$ is an $mp$ - ring iff $Spec_F(R)$ is normal iff $Spec_F(A)$ is normal.

$(2)\Rightarrow (7)$
Assume that $p\in Min(A)$ and $q\in D(p)$. Consider an element $r\in Min(A)$ such that $r\leq q$. Since $p\not\leq q$ we have $p\neq r$, hence $p\lor r = 1$  by the hypothesis $(2)$, so there exist $c,d\in K(A)$ such that $c\leq p$, $d\leq r$ and $c\lor d = 1$. Thus $V(c)\bigcap V(d)$ = $V(c\lor d)$ = $V(1)$ = $\emptyset$, so $V(d)\subseteq D(c)\subseteq D(p)$. From $d\leq r\leq q$ one gets $q\in V(d)$. Since $q\in V(d)\subseteq  D(p)$ and $V(d)$ is a basic open set of $Spec_F(A)$ it follows that $D(p)$ is open in $Spec_F(A)$. We conclude that $V(p)$ is closed in $Spec_F(A)$.

$(7)\Rightarrow (2)$
Assume by absurdum that there exist two distinct $p,q\in Min(A)$ such that $p\lor q < 1$, so $p\lor q \leq m$ for some $m\in Max(A)$. Therefore $m\in V(p)$ and $m\in V(q)$, hence $V(p)\bigcap V(q)\neq \emptyset$. Since $V(q)$ is an open neighborhood of $p$ in $Spec_F(A)$ and $V(p)$ is flat closed, one gets $q\in V(p)$. Thus $q\leq p$ so $q = p$ because $p$ and $q$ are minimal $m$ - prime elements. This contradiction shows that for all distinct minimal $m$ - prime elements $p,q$ we have $p\lor q = 1$.

\end{proof}

\begin{remarca}
If $R$ is a commutative ring and $A$ is the quantale $Id(R)$ of ideals of $R$, then applying the previous result one obtains some of the characterizations of $mp$ - rings, contained in Theorem 6.2 of \cite{Aghajani}.
\end{remarca}

Recall from \cite{Al-Ezeh2} that a commutative ring $R$ is said to be an $PF$ - ring if the annihilator of each element of $R$ is a pure ideal. We shall generalize this notion to quantales. Then a quantale $A$ is a $PF$ - quantale if for each $c\in K(A)$, $c^{\perp}$ is a pure element. For any commutative ring $R$, $Id(R)$ is a $PF$ - quantale if and only if $R$ is a $PF$ - ring.

\begin{propozitie}
\cite{Al-Ezeh1}
If $A$ is a bounded distributive lattice $L$ then the following are equivalent
\usecounter{nr}
\begin{list}{(\arabic{nr})}{\usecounter{nr}}
\item L is conormal;

\item For all $x\in L$, $Ann(x)$ is a $\sigma$ - ideal;

\item Any minimal prime ideal of $L$ is a $\sigma$ - ideal.

\end{list}

\end{propozitie}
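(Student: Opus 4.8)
The plan is to establish the cycle of implications $(1)\Rightarrow(2)\Rightarrow(3)\Rightarrow(1)$, routing the last (and only genuinely global) step through the Cornish characterization of conormal lattices recalled in Proposition 8.9.

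First I would prove $(1)\Rightarrow(2)$ directly from the definitions. Fix $x\in L$ and let $y\in Ann(x)$, so $x\land y=0$. Conormality applied to this pair yields $u,v\in L$ with $x\land u=0$, $y\land v=0$ and $u\lor v=1$; thus $u\in Ann(x)$ and $v\in Ann(y)$, so $1=u\lor v\in Ann(x)\lor Ann(y)$. Hence $Ann(x)\lor Ann(y)=L$ for every $y\in Ann(x)$, which is exactly the statement that $Ann(x)$ is a $\sigma$-ideal.

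Next, for $(2)\Rightarrow(3)$ I would take a minimal prime ideal $P$ and an arbitrary $x\in P$, and show $P\lor Ann(x)=L$. By Lemma 8.1 the minimality of $P$ gives $Ann(x)\not\subseteq P$, so there is $y\in Ann(x)$ with $y\notin P$. Since $Ann(x)$ is a $\sigma$-ideal by hypothesis, $Ann(x)\lor Ann(y)=L$, so $1=s\lor t$ with $s\in Ann(x)$ and $t\in Ann(y)$. From $t\land y=0\in P$ and $y\notin P$, primeness forces $t\in P$; therefore $1=s\lor t\in Ann(x)\lor P$, which proves that $P$ is a $\sigma$-ideal.

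Finally, for $(3)\Rightarrow(1)$ I would invoke Proposition 8.9 and reduce conormality to showing that every prime ideal $P$ contains a unique minimal prime. Existence is the standard Zorn's lemma argument, since the intersection of a chain of primes contained in $P$ is again prime. For uniqueness, suppose $M_1\neq M_2$ are minimal primes with $M_1,M_2\subseteq P$; as neither contains the other, pick $x\in M_1$ with $x\notin M_2$. By $(3)$ the ideal $M_1$ is a $\sigma$-ideal, so $M_1\lor Ann(x)=L$ gives $a\lor b=1$ with $a\in M_1$ and $b\in Ann(x)$; then $x\land b=0\in M_2$ with $x\notin M_2$ forces $b\in M_2$. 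Since $a\in M_1\subseteq P$ and $b\in M_2\subseteq P$ we obtain $1=a\lor b\in P$, contradicting the properness of $P$. The main obstacle is precisely this last implication: unlike the first two, which are purely local manipulations of annihilators, passing from the $\sigma$-ideal property of minimal primes back to the global conormality condition is not direct, and the cleanest route is to translate everything into the prime-spectrum language of Proposition 8.9 and argue by the uniqueness-of-minimal-prime contradiction above.
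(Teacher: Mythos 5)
Your proof is correct, but there is nothing in the paper to compare it against: Proposition 8.16 is stated with the citation \cite{Al-Ezeh1} and no proof is given, the result being used later (in Proposition 8.18, Theorem 8.19 and Theorem 8.20) purely as a black box. Judged on its own merits, your cycle $(1)\Rightarrow(2)\Rightarrow(3)\Rightarrow(1)$ is sound. The first two implications are exactly the local annihilator manipulations one would hope for: in $(1)\Rightarrow(2)$ the conormality witnesses $u,v$ land in $Ann(x)$ and $Ann(y)$ respectively, and in $(2)\Rightarrow(3)$ the appeal to Lemma 8.1 is the correct way to exploit minimality of $P$, with primeness of $P$ turning $t\land y=0$, $y\notin P$ into $t\in P$. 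For $(3)\Rightarrow(1)$, routing through Proposition 8.9 (Cornish: conormal iff every prime ideal contains a unique minimal prime) is the natural move inside this paper's framework; your Zorn argument for existence (intersections of chains of primes are prime) and your uniqueness-by-contradiction argument (the $\sigma$-ideal property of $M_1$ produces $1=a\lor b$ with $a\in M_1\subseteq P$, $b\in Ann(x)$, and primeness of $M_2$ forces $b\in M_2\subseteq P$, contradicting properness of $P$) are both valid. The one dependence worth flagging is that Proposition 8.9 is itself quoted from \cite{Cornish} without proof, so what you have is a correct reduction of Al-Ezeh's equivalence to Cornish's characterization rather than a from-scratch proof; within the conventions of this paper, where Proposition 8.9 is freely available, that is a legitimate and complete argument.
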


In other words, a bounded distributive lattice $L$ is conormal if and only if $Id(L)$ is a $PF$ - frame.

In what follows we shall establish a relationship between $PF$ - quantales and $mp$ - quantales. We fix a coherent quantale $A$.

\begin{lema}
Any $PF$ - quantale $A$ is semiprime.
\end{lema}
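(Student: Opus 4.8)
The plan is to establish that $\rho(0)=0$ by showing that the only compact nilpotent element is $0$. By Lemma 2.4(1) we have $\rho(0)=\bigvee\{c\in K(A)\mid c^{k}\leq 0 \text{ for some } k\geq 1\}$, and since $0$ is the bottom element this family consists precisely of the compact $c$ with $c^{k}=0$ for some $k$. A join of elements of $A$ equals $0$ exactly when every element of the family is already $0$, so it suffices to prove the following: if $c\in K(A)$ and $c^{k}=0$ for some $k\geq 1$, then $c=0$.

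First I would treat the quadratic case $k=2$, which is the heart of the argument. If $c^{2}=c\cdot c=0$, then by the residuation adjunction $c\leq c\rightarrow 0=c^{\perp}$. Since $A$ is a $PF$-quantale, $c^{\perp}$ is a pure element; applying the purity of $c^{\perp}$ to the compact element $c\leq c^{\perp}$ yields $c^{\perp}\lor c^{\perp}=1$, that is, $c^{\perp}=1$. On the other hand, $c\cdot c^{\perp}=0$ holds unconditionally, because multiplication distributes over the join defining $c^{\perp}=\bigvee\{x\mid cx=0\}$. Combining $c^{\perp}=1$ with integrality ($c\cdot 1=c$) gives $c=c\cdot c^{\perp}=0$.

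The general case then follows by induction on $k$, reducing to the quadratic case just handled. For $k\geq 2$ the element $d=c^{k-1}$ is compact (coherence guarantees that $K(A)$ is closed under multiplication) and satisfies $d^{2}=c^{2k-2}=0$, since $c^{m}=0$ for every $m\geq k$ and $2k-2\geq k$. Hence $d=c^{k-1}=0$ by the previous paragraph, and descending on $k$ yields $c=0$. I do not expect a serious obstacle here: the whole proof is routine once one notices that $c^{2}=0$ forces $c\leq c^{\perp}$, which is exactly the inclusion needed to trigger the purity of $c^{\perp}$. The only points deserving care are the reduction of an arbitrary compact nilpotent to the quadratic case and the repeated use of coherence to keep the powers $c^{k-1}$ inside $K(A)$.
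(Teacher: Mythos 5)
Your proof is correct and takes essentially the same route as the paper: the paper likewise observes that for compact $c$ with $c^{n}=0$ one has $c^{n-1}\leq (c^{n-1})^{\perp}$, invokes purity of $(c^{n-1})^{\perp}$ to get $(c^{n-1})^{\perp}\lor (c^{n-1})^{\perp}=1$, hence $c^{n-1}=0$, and iterates down to $c=0$ before concluding via Lemma 2.4. Your explicit isolation of the quadratic case $c^{2}=0$ and the descent on the exponent is just a cleaner organization of the paper's ``by using many times this argument'' step.
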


\begin{proof}
Let $c$ be a compact element of $A$ such that $c^n = 0$ for some integer $n\geq 1$. Then $c^{n-1}\leq (c^{n-1})^{\perp}$, hence $(c^{n-1})^{\perp}$ =   $(c^{n-1})^{\perp}\lor  (c^{n-1})^{\perp} = 1$, because $(c^{n-1})^{\perp}$ is pure. Thus $c^{n-1} \leq (c^{n-1})^\perp=0$ , so $c^{n-1} = 0$. By using many times this argument one gets $c = 0$. According to Lemma 2.4,(2) it follows that $A$ is semiprime.

\end{proof}

\begin{propozitie}
If $A$ is a $PF$ - quantale then the reticulation $L(A)$ is a conormal lattice.
\end{propozitie}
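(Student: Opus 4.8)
The plan is to establish conormality of $L(A)$ through its annihilator characterization rather than directly from the definition. By Proposition 8.16 it suffices to prove that for every $x \in L(A)$ the annihilator $Ann(x)$ is a $\sigma$-ideal of $L(A)$. The first thing I would record is that a $PF$-quantale is automatically semiprime, which is exactly Lemma 8.17; this is precisely the hypothesis needed to invoke the semiprime half of Proposition 4.5.

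Next I would reduce an arbitrary element of $L(A)$ to the reticulation image of a compact element. Since $\lambda_A : K(A) \to L(A)$ is surjective, write $x = \lambda_A(c)$ for some $c \in K(A)$. By Lemma 3.3(5) we have $c^\ast = (\lambda_A(c)]$, so the annihilator of the principal ideal $c^\ast$ coincides with the annihilator of its generator; that is, $Ann(c^\ast) = Ann(\lambda_A(c)) = Ann(x)$. Applying the semiprime case of Proposition 4.5 to the compact element $c$ then yields $Ann(x) = Ann(c^\ast) = (c^\perp)^\ast$.

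It remains to recognise $(c^\perp)^\ast$ as a $\sigma$-ideal, and this is where the $PF$-hypothesis enters: by definition $c^\perp$ is a pure element of $A$, and Lemma 4.7 states precisely that the reticulation image $a^\ast$ of a pure element $a$ is a $\sigma$-ideal of $L(A)$. Hence $(c^\perp)^\ast = Ann(x)$ is a $\sigma$-ideal. Since $x$ was an arbitrary element of $L(A)$, condition (2) of Proposition 8.16 is satisfied, and therefore $L(A)$ is conormal.

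I expect no genuine obstacle here, as every step is an application of an already established result. The only point deserving a line of care is the identity $Ann(c^\ast) = Ann(\lambda_A(c))$: it rests on Lemma 3.3(5), which identifies $c^\ast$ with the principal ideal generated by $\lambda_A(c)$, together with the elementary fact that in a distributive lattice the annihilator of a principal ideal equals the annihilator of its generator.
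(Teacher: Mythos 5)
Your proof is correct and follows essentially the same route as the paper's: semiprimeness via Lemma 8.17, the identity $Ann(x)=(c^{\perp})^{\ast}$ via the semiprime case of Proposition 4.5, purity of $c^{\perp}$ together with Lemma 4.7 to get a $\sigma$-ideal, and Proposition 8.16 to conclude conormality. If anything, you are slightly more careful than the paper, which silently uses the identification $Ann(\lambda_A(c))=Ann(c^{\ast})$ that you justify explicitly through Lemma 3.3(5).
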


\begin{proof}
By Lemma 8.17, the $PF$ - quantale $A$ is semiprime. Assume that $x\in L(A)$ so $x = \lambda_A(c)$, for some $c\in K(A)$. Applying Proposition 4.5 one obtains $Ann(x)$ = $Ann(\lambda_A(c))$ = $Ann(c^{\ast})$ = $(c^{\perp})^{\ast}$. By hypothesis, $c^{\perp}$ is a pure element of $A$, therefore by Lemma 4.7 it follows that $Ann(x)$ = $(c^{\perp})^{\ast}$ is a $\sigma$ - ideal of the lattice $L(A)$. Applying Proposition 8.16 it follows that $L(A)$ is a conormal lattice.
\end{proof}

\begin{teorema}
For a coherent quantale $A$ the following are equivalent:
\usecounter{nr}
\begin{list}{(\arabic{nr})}{\usecounter{nr}}

\item $A$ is a $PF$ - quantale;

\item $A$ is a semiprime $mp$ - quantale.

\end{list}
\end{teorema}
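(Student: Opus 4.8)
The plan is to route both implications through the reticulation $L(A)$ and the characterization of $mp$-quantales by conormality of $L(A)$ given in Corollary 8.10. The forward implication is essentially a concatenation of results already established, while the converse requires transferring the annihilator description of conormality (Proposition 8.16) back from $L(A)$ to the quantale $A$ via Proposition 4.5 and Lemma 4.7.

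For $(1)\Rightarrow(2)$, assume $A$ is a $PF$-quantale. By Lemma 8.17 the quantale $A$ is semiprime. By Proposition 8.18 the reticulation $L(A)$ is a conormal lattice, whence by Corollary 8.10 the quantale $A$ is an $mp$-quantale. Hence $A$ is a semiprime $mp$-quantale, as required.

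For $(2)\Rightarrow(1)$, assume $A$ is a semiprime $mp$-quantale. By Corollary 8.10 the lattice $L(A)$ is conormal, so by Proposition 8.16 the annihilator $Ann(x)$ is a $\sigma$-ideal of $L(A)$ for every $x\in L(A)$. Fix $c\in K(A)$ and apply this to $x=\lambda_A(c)$. Since $c^{\ast}=(\lambda_A(c)]$ by Lemma 3.3,(5), we have $Ann(\lambda_A(c))=Ann(c^{\ast})$, and because $A$ is semiprime Proposition 4.5 gives $Ann(c^{\ast})=(c^{\perp})^{\ast}$. Thus $(c^{\perp})^{\ast}$ is a $\sigma$-ideal of $L(A)$, and since $A$ is semiprime, Lemma 4.7 shows that $((c^{\perp})^{\ast})_{\ast}$ is a pure element of $A$.

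It remains to identify $((c^{\perp})^{\ast})_{\ast}$ with $c^{\perp}$ itself, and this is the one genuinely computational point, which I expect to be the main obstacle: everything preceding it is a direct invocation of cited results, whereas here one must use the interaction between the negation $c^{\perp}$, the radical $\rho$, and semiprimeness. By Lemma 3.4 we have $((c^{\perp})^{\ast})_{\ast}=\rho(c^{\perp})$, so I must check that $c^{\perp}$ is a radical element. From $c\cdot c^{\perp}=0$ and Lemma 2.2,(2) one gets $\rho(c)\wedge\rho(c^{\perp})=\rho(c\cdot c^{\perp})=\rho(0)=0$; since the quantale is integral, $\rho(c)\cdot\rho(c^{\perp})\leq\rho(c)\wedge\rho(c^{\perp})=0$, and from $c\leq\rho(c)$ it follows that $c\cdot\rho(c^{\perp})=0$, i.e. $\rho(c^{\perp})\leq c\rightarrow 0=c^{\perp}$. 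Combined with $c^{\perp}\leq\rho(c^{\perp})$ from Lemma 2.2,(1), this yields $c^{\perp}=\rho(c^{\perp})=((c^{\perp})^{\ast})_{\ast}$, which is therefore pure. As $c\in K(A)$ was arbitrary, $A$ is a $PF$-quantale, completing the equivalence.
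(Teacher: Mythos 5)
Your proof is correct. The implication $(1)\Rightarrow(2)$ is exactly the paper's argument (you merely make explicit the appeal to Lemma 8.17 for semiprimeness, which the paper leaves implicit inside the proof of Proposition 8.18). For $(2)\Rightarrow(1)$ both arguments share the same starting point --- Corollary 8.10 and Proposition 8.16 give that $Ann(\lambda_A(c))=Ann(c^{\ast})=(c^{\perp})^{\ast}$ is a $\sigma$-ideal, via Proposition 4.5 --- but then they diverge. The paper fixes a compact $d\leq c^{\perp}$ and verifies $c^{\perp}\lor d^{\perp}=1$ by hand: it computes $\rho(\rho(c^{\perp})\lor\rho(d^{\perp}))=(Ann(\lambda_A(c))\lor Ann(\lambda_A(d)))_{\ast}=(L(A))_{\ast}=1$ using Lemma 3.4, Proposition 4.5 and Corollary 3.10, and then strips the radicals with Lemma 2.2,(3) and (6). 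You instead invoke the second half of Lemma 4.7 to conclude that $((c^{\perp})^{\ast})_{\ast}$ is pure, which obliges you to prove the supplementary identity $((c^{\perp})^{\ast})_{\ast}=\rho(c^{\perp})=c^{\perp}$, i.e., that $c^{\perp}$ is a radical element of a semiprime quantale; your verification of this ($c\cdot\rho(c^{\perp})\leq\rho(c)\rho(c^{\perp})\leq\rho(c)\land\rho(c^{\perp})=\rho(c\cdot c^{\perp})=\rho(0)=0$, hence $\rho(c^{\perp})\leq c\rightarrow 0=c^{\perp}$) is sound, and it is a point the paper never needs or states. Your route is more modular: it reuses Lemma 4.7, whose own proof already contains the compact-element computation that the paper essentially repeats inline, and it isolates a reusable fact (annihilator elements are radical in semiprime quantales); it also mirrors the paper's proof of Theorem 8.20, where purity of minimal primes is obtained from Lemma 4.7 together with $(p^{\ast})_{\ast}=p$. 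What the paper's route buys in exchange is that it never needs $c^{\perp}$ to be radical, since Lemma 2.2,(6) lets it discard the radicals only at the very last step.
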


\begin{proof}

$(1)\Rightarrow(2)$ By Proposition 8.18 and Corollary 8.10.

$(2)\Rightarrow(1)$ In order to prove that $A$ is a $PF$-quantale let us assume that $c\in K(A)$. We shall prove that $c^{\perp}$ is a pure element of $A$. Let $d$ be a compact element of $A$ such that $d\leq c^{\perp}$, hence $\lambda_A(c)\land \lambda_A(d)$ = $\lambda_A(cd)$ = $\lambda_A(0) = 0$, i.e. $\lambda_A(d)\in Ann(c^{\ast})$ = $Ann(\lambda_A(c))$. By Corollary 8.10 $L(A)$ is a conormal lattice, hence $Ann(\lambda_A(c))$ is $\sigma$ - ideal of $L(A)$ (cf. Proposition 8.16). It follows that $Ann(c^{\ast})\lor Ann(d^{\ast})$ = $Ann(\lambda_A(c))\lor Ann(\lambda_A(d))$ = $L(A)$.

According to Lemma 3.4, Proposition 4.5 and Corollary 3.10, the following equalities hold:

$\rho(\rho(c^{\perp})\lor \rho(d^{\perp}))$ = $\rho(((c^{\perp})^{\ast})_{\ast}\lor ((c^{\perp})^{\ast})_{\ast})$ = $\rho((Ann(c^{\ast}))_{\ast}\lor (Ann(d^{\ast}))_{\ast})$ = $(Ann(\lambda_A(c))\lor Ann(\lambda_A(d)))_{\ast}$ = $(L(A))_{\ast}$ = $1$.

By Lemma 2.2,(3) and (6) one gets $c^{\perp}\lor d^{\perp} = 1$, hence $c^{\perp}$ is pure.
\end{proof}

\begin{teorema} For a coherent quantale $A$ consider the following conditions:
\usecounter{nr}
\begin{list}{(\arabic{nr})}{\usecounter{nr}}
\item Any minimal $m$ - prime element of $A$ is pure;

\item $A$ is an $mp$ - quantale.
\end{list}
Then $(1)$ implies $(2)$. If the quantale $A$ is semiprime then the converse implication holds.
\end{teorema}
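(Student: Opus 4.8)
The plan is to transport both implications to the reticulation $L(A)$, where purity of an element corresponds to the $\sigma$-ideal property via Lemma 4.7, and the $mp$-condition corresponds to conormality via Corollary 8.10. The bridge between minimal $m$-prime elements of $A$ and minimal prime ideals of $L(A)$ is furnished by Proposition 8.3 together with Proposition 3.7: the map $p \mapsto p^{\ast}$ restricts to a bijection of $Min(A)$ onto $Min_{Id}(L(A))$, with inverse $P \mapsto P_{\ast}$, satisfying $(P_{\ast})^{\ast} = P$ (Lemma 3.3(2)) and $(p^{\ast})_{\ast} = p$ for $p \in Spec(A)$ (Lemma 3.3(3)). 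Once this dictionary is in place, both directions are short.

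For $(1)\Rightarrow(2)$, I would take an arbitrary minimal prime ideal $P$ of $L(A)$ and write $P = p^{\ast}$ with $p = P_{\ast}$; by Proposition 8.3 $(2)\Rightarrow(1)$, this $p$ lies in $Min(A)$. By hypothesis $p$ is pure, so the first part of Lemma 4.7 gives that $p^{\ast} = P$ is a $\sigma$-ideal. Hence every minimal prime ideal of $L(A)$ is a $\sigma$-ideal, and by Proposition 8.16 (a lattice is conormal iff each of its minimal prime ideals is a $\sigma$-ideal) the lattice $L(A)$ is conormal. Corollary 8.10 then yields that $A$ is an $mp$-quantale. I note that this direction uses only Lemma 4.7's first part, which does not require semiprimeness — consistent with the statement.

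For $(2)\Rightarrow(1)$ under the semiprime hypothesis, I would fix $p \in Min(A)$. By Proposition 8.3, $p^{\ast}$ is a minimal prime ideal of $L(A)$; since $A$ is an $mp$-quantale, Corollary 8.10 makes $L(A)$ conormal, so by Proposition 8.16 the ideal $p^{\ast}$ is a $\sigma$-ideal. Now I would invoke the second part of Lemma 4.7 — valid precisely because $A$ is semiprime — which sends each $\sigma$-ideal $J$ of $L(A)$ to the pure element $J_{\ast}$. Taking $J = p^{\ast}$ and using $(p^{\ast})_{\ast} = p$, I conclude that $p$ is pure.

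The proof is essentially a translation across the reticulation, so there is no deep obstacle; the only delicate point is the role of semiprimeness, which enters exactly once, in the second half of Lemma 4.7, and explains why the converse is asserted only under that hypothesis. The step deserving the most care is confirming that $p \mapsto p^{\ast}$ really carries $Min(A)$ onto the minimal prime ideals of $L(A)$ (Proposition 8.3), since the entire reduction rests on that bijection.
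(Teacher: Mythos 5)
Your proposal is correct and follows essentially the same route as the paper's own proof: both directions reduce to the reticulation via Corollary 8.10 and Proposition 8.16, use the bijection $Min(A) \leftrightarrow Min_{Id}(L(A))$ given by $p \mapsto p^{\ast}$, and invoke the two halves of Lemma 4.7, with semiprimeness entering only through its second half together with the identity $(p^{\ast})_{\ast} = p$. The only difference is cosmetic: you cite Proposition 8.3 explicitly for the transfer of minimality, which the paper leaves implicit.
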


\begin{proof} First we shall prove that $(1)$ implies $(2)$. According to Corollary 8.10 it suffices to check that the reticulation $L(A)$ is a conormal lattice. Let $P$ be a minimal prime ideal of $L(A)$, hence $P = p^{\ast}$ for some $p\in Min(A)$. By taking into account the hypothesis, it results that $p$ is a pure element of $A$. In accordance to Lemma 4.7, $P = p^{\ast}$ is a $\sigma$ - ideal of $L(A)$, so any minimal prime ideal of $L(A)$ is a $\sigma$ - ideal. Applying Proposition 8.16 it follows that the lattice $L(A)$ is conormal.

Assume now that $A$ is a semiprime $mp$ - quantale and $p\in Min(A)$, so $p^{\ast}$ is a minimal prime ideal of $L(A)$. By Corollary 8.10, $L(A)$ is a conormal lattice, thus any minimal prime ideal of $L(A)$ is a $\sigma$ - ideal. Therefore  $p^{\ast}$ is a $\sigma$ - ideal of $L(A)$. Since $A$ is semiprime, by applying Proposition 4.7 it follows that $p = (p^{\ast})_{\ast}$ is a pure element of $A$.
\end{proof}

\begin{corolar}Let $A$ be a semiprime quantale. Then $A$ is a $PF$ - quantale if and only if any minimal $m$ - prime element of $A$ is pure.

\end{corolar}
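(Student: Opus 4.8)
The plan is to obtain this corollary as an immediate consequence of Theorem 8.19 and Theorem 8.20, using the standing hypothesis that $A$ is a coherent semiprime quantale to activate exactly the directions of those two theorems that carry the semiprime assumption. No new construction on the reticulation is needed; everything reduces to chaining the two previously established equivalences.

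For the forward implication, I would assume that $A$ is a $PF$-quantale. By Theorem 8.19 the class of $PF$-quantales coincides with that of semiprime $mp$-quantales, so $A$ is in particular an $mp$-quantale; this is also consistent with the standing hypothesis, since Lemma 8.17 already guarantees that any $PF$-quantale is semiprime. Now I would invoke the converse half of Theorem 8.20, which holds precisely when $A$ is semiprime: since $A$ is an $mp$-quantale and semiprime, every minimal $m$-prime element of $A$ is pure.

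For the reverse implication, I would assume that each minimal $m$-prime element of $A$ is pure. The implication $(1)\Rightarrow(2)$ of Theorem 8.20 requires no semiprimeness, and it yields directly that $A$ is an $mp$-quantale. Combining this with the hypothesis that $A$ is semiprime, $A$ is a semiprime $mp$-quantale, so Theorem 8.19 returns that $A$ is a $PF$-quantale, completing the equivalence.

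The only point demanding care, rather than a genuine obstacle, is bookkeeping of the semiprime hypothesis: Theorem 8.20 has an asymmetric statement (the $(2)\Rightarrow(1)$ direction needs semiprimeness while $(1)\Rightarrow(2)$ does not), so I would make sure the semiprime assumption from the corollary is cited exactly where the converse direction of Theorem 8.20 and the characterization of Theorem 8.19 consume it. Once the hypotheses are matched correctly, both directions close without any further computation.

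\begin{proof}
Assume first that $A$ is a $PF$ - quantale. By Theorem 8.19, $A$ is a semiprime $mp$ - quantale; in particular $A$ is an $mp$ - quantale. Since $A$ is semiprime, the converse implication of Theorem 8.20 applies, hence any minimal $m$ - prime element of $A$ is pure.

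Conversely, assume that any minimal $m$ - prime element of $A$ is pure. By the implication $(1)\Rightarrow(2)$ of Theorem 8.20, the quantale $A$ is an $mp$ - quantale. As $A$ is semiprime by hypothesis, $A$ is a semiprime $mp$ - quantale, so by Theorem 8.19 it follows that $A$ is a $PF$ - quantale.
\end{proof}
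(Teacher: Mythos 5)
Your proof is correct and is exactly the paper's argument: the paper's proof of this corollary is the one-line statement ``We apply Theorems 8.19 and 8.20,'' and your write-up simply makes explicit the chaining of those two results, with the semiprime hypothesis correctly routed to the converse direction of Theorem 8.20 and to the equivalence in Theorem 8.19.
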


\begin{proof} We apply Theorems 8.19 and 8.20.

\end{proof}

\begin{teorema}
For a coherent quantale $A$ the following are equivalent:
\usecounter{nr}
\begin{list}{(\arabic{nr})}{\usecounter{nr}}

\item $A$ is a $PF$ - quantale;

\item $A$ is a semiprime $mp$ - quantale;

\item If $c,d\in K(A)$ then $cd = 0$ implies $c^{\perp}\lor d^{\perp} = 1$;

\item If $c,d\in K(A)$ then $(cd)^{\perp}$ = $c^{\perp}\lor d^{\perp}$;

\item For each $c\in K(A)$, $c^{\perp}$ is a pure element.

\end{list}
\end{teorema}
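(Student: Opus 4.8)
The plan is to lean on what is already available. The equivalence $(1)\Leftrightarrow(2)$ is exactly Theorem 8.19, and condition $(5)$ is verbatim the definition of a $PF$-quantale, so $(1)\Leftrightarrow(5)$ is free. It therefore remains only to weave $(3)$ and $(4)$ into this circle, and I would do this through the chain $(5)\Rightarrow(3)$, $(3)\Leftrightarrow(4)$, $(3)\Rightarrow(2)$, which closes everything.

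First, $(5)\Rightarrow(3)$ is immediate from purity. If $c,d\in K(A)$ and $cd=0$, then $dc=0$ places $d$ in the set whose join is $c^{\perp}$, so $d\leq c^{\perp}$. Since $c^{\perp}$ is assumed pure and $d$ is compact with $d\leq c^{\perp}$, the definition of a pure element gives $c^{\perp}\lor d^{\perp}=1$ at once.

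Next, $(3)\Leftrightarrow(4)$ is pure quantale arithmetic. The inequality $c^{\perp}\lor d^{\perp}\leq (cd)^{\perp}$ always holds, since $c^{\perp}(cd)=(c^{\perp}c)d=0$ and likewise $d^{\perp}(cd)=0$. The direction $(4)\Rightarrow(3)$ is trivial, as $cd=0$ forces $(cd)^{\perp}=0^{\perp}=1$. For $(3)\Rightarrow(4)$ I would use that $A$ is algebraic, so $(cd)^{\perp}$ is the join of the compact $z\leq(cd)^{\perp}$; it then suffices to show each such $z$ lies below $c^{\perp}\lor d^{\perp}$. From $z\leq(cd)^{\perp}$ we get $(zc)d=zcd=0$, and $(3)$ applied to the compact elements $zc$ and $d$ yields $(zc)^{\perp}\lor d^{\perp}=1$. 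Multiplying by $z$ and distributing gives $z=z(zc)^{\perp}\lor zd^{\perp}$; here $zd^{\perp}\leq d^{\perp}$, while $z(zc)^{\perp}\leq c^{\perp}$ because $(z(zc)^{\perp})c=(zc)^{\perp}(zc)=0$. Hence $z\leq c^{\perp}\lor d^{\perp}$, completing the reverse inclusion.

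Finally, $(3)\Rightarrow(2)$ is where the real work lies, and I expect it to be the main obstacle; it breaks into showing that $A$ is semiprime and that $L(A)$ is conormal. For semiprimality I would run a descent on nilpotent compacts: if $c\in K(A)$ and $c^{n}=0$ with $n\geq 2$, then $c^{m}$ with $m=\lceil n/2\rceil$ is compact and $(c^{m})^{2}=0$, so $(3)$ forces $(c^{m})^{\perp}=1$ and therefore $c^{m}=0$; iterating down the exponent gives $c=0$, whence $\rho(0)=0$ by Lemma 2.4. Granting semiprimality, I would check conormality of $L(A)$: given $x\land y=0$ with $x=\lambda_A(c)$, $y=\lambda_A(d)$, the reticulation identity $\lambda_A(cd)=\lambda_A(c)\land\lambda_A(d)=0$ together with Lemma 3.2,(9) upgrades this to $cd=0$, so $(3)$ gives $c^{\perp}\lor d^{\perp}=1$. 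Translating through the reticulation — using Proposition 4.5 to identify $(c^{\perp})^{\ast}=Ann(\lambda_A(c))$, the identity $a^{\ast}=(\rho(a))^{\ast}$, and join-preservation of the frame isomorphism $\Phi\colon R(A)\to Id(L(A))$ alongside Lemma 2.2,(4) — the equality $c^{\perp}\lor d^{\perp}=1$ becomes $Ann(x)\lor Ann(y)=L(A)$; picking $u\in Ann(x)$ and $v\in Ann(y)$ with $u\lor v=1$ supplies the conormality witnesses $x\land u=y\land v=0$. Corollary 8.10 then gives that $A$ is an $mp$-quantale, which with semiprimality is precisely $(2)$. The delicate point throughout is the gap between an element and its radical: $c^{\perp}$ need not be radical, so every passage between quantale-level joins or annihilators and the lattice $L(A)$ must be routed through $\rho$ and $a^{\ast}=(\rho(a))^{\ast}$ rather than by applying $\Phi$ to $c^{\perp}$ directly.
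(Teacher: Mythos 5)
Your proof is correct, and while its ingredients overlap with the paper's, the architecture is genuinely different in the two harder implications. The paper also gets $(1)\Leftrightarrow(2)$ from Theorem 8.19 and dismisses $(3)\Leftrightarrow(5)$ as obvious, but then it proves $(2)\Rightarrow(3)$ \emph{directly}: assuming $cd=0$ and $c^{\perp}\lor d^{\perp}\leq m$ for some maximal $m$, it picks a minimal $m$-prime $p\leq m$, uses primality to get $c\leq p$ or $d\leq p$, and invokes Theorem 8.20 (minimal $m$-prime elements of a semiprime $mp$-quantale are pure) to obtain $p\lor c^{\perp}=1$, contradicting $p\lor c^{\perp}\leq m$. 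You never prove $(2)\Rightarrow(3)$ at all; you close the circle through the definitional identity $(1)\Leftrightarrow(5)$ and your easy purity argument $(5)\Rightarrow(3)$, which is a leaner decomposition that avoids Theorem 8.20 entirely. In the converse $(3)\Rightarrow(2)$, both you and the paper establish semiprimality by the same nilpotent-descent trick (the paper steps down from $c^n$ to $c^{n-1}$, you halve the exponent; both work), but the $mp$-part diverges: the paper stays at the quantale level, reducing via Theorem 8.14,(2) to showing distinct minimal $m$-primes join to $1$, and manufactures compact witnesses using Corollary 8.4 and primality; you instead descend to the reticulation, verify conormality of $L(A)$ by hand --- translating $c^{\perp}\lor d^{\perp}=1$ into $Ann(x)\lor Ann(y)=L(A)$ through Proposition 4.5, Lemma 3.4, Lemma 2.2,(4) and the frame isomorphism $\Phi$ --- and then quote Corollary 8.10. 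Your route buys economy (only one hard implication, plus reuse of existing theorems) and you correctly flag the radical-versus-element subtlety that makes the translation legitimate; the paper's route buys a self-contained argument inside the quantale itself, with no reticulation bookkeeping. Finally, your $(3)\Rightarrow(4)$ is essentially the paper's argument but marginally cleaner: by multiplying $z$ into the join $(zc)^{\perp}\lor d^{\perp}=1$ and distributing, you avoid the paper's intermediate step of using compactness of $1$ to extract elements $u,v$ with $u\lor v=1$.
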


\begin{proof} The equivalence of $(1)$ and $(2)$ follows from Theorem 8.19 and that the conditions $(3)$ and $(5)$ are equivalent is obvious.

 $(2)\Rightarrow(3)$ Assume by absurdum that there exist $c,d\in K(A)$ such that  $cd = 0$ and $c^{\perp}\lor d^{\perp} < 1$, hence there exists a minimal $m$ - prime element $p$ such that $p\leq m$. Since $cd = 0$ and $p\in Spec(A)$ we have $c\leq p$ or $d\leq p$. Assume that $c\leq p$, so $p\lor c^{\perp} = 1$ (by Theorem 8.20, the minimal $m$ - prime element $p$ is pure). This contradicts $p\lor c^{\perp}\leq m$, so the implication is proven.

 $(3)\Rightarrow(2)$ First we prove that $A$ is semiprime. Let $c\in K(A)$  such that $c^n = 0$ for some integer $n\geq 1$. Assuming $n > 1$, from $c^{n-1} c = 0$ one gets $(c^{n-1})^{\perp}$ =  $c^{\perp}\lor (c^{n-1})^{\perp}$ = $1$, hence $c^{n-1} = 0$. By using many times this argument one obtains $c = 0$, so $A$ is semiprime.

 Now we shall prove that $A$ is an $mc$ - quantale. According to Theorem 8.14 it suffices to show that for any distinct minimal $m$ - prime elements $p,q$ we have $p\lor q = 1$. Assume that $p,q\in Min(A)$, $p\neq q$, so there exists $d\in K(A)$ with $d\leq p$ and $d\not\leq q$. By Corollary 8.4 we have $d^{\perp}\not\leq p$, so there exists $c\in K(A)$ such that $c\leq d^{\perp}$ and $c\not\leq p$. Thus $cd = 0$ implies $c^{\perp}\lor d^{\perp} = 1$, so there exist $u,v\in K(A)$ such that $u\leq c^{\perp}$, $v\leq d^{\perp}$ and $u\lor v = 1$. Since $p$ is $m$ - prime, from $uc = 0$ and $c\not\leq p$ it results that $u\leq p$. Similarly, one obtains $v\leq q$, so in the both cases we have $p\lor q = 1$.

 $(3)\Rightarrow(4)$ In order to prove that $(cd)^{\perp}\leq c^{\perp}\lor d^{\perp}$ let us consider a compact element $e$ of $A$ such that $e\leq (cd)^{\perp}$. Then $ecd = 0$, so by hypothesis one gets $(ec)^{\perp}\lor (d)^{\perp} = 1$, therefore there exist $u,v\in K(A)$ such that $u\leq (ec)^{\perp}$, $v\leq (d)^{\perp}$ and $u\lor v = 1$. It follows that $eu\leq c^{\perp}$, $ev\leq d^{\perp}$ and $e = e(u\lor v) = eu\lor ev$, hence $e\leq c^{\perp}\lor d^{\perp}$. Then the inequality $(cd)^{\perp}\leq c^{\perp}\lor d^{\perp}$ was proven. The converse inequality is clear, so $(cd)^{\perp}$ = $c^{\perp}\lor d^{\perp}$.

 $(4)\Rightarrow(3)$ It is easy to see that this implication holds.

\end{proof}

\end{document}